\def\abs#1{\left| #1 \right|}
\newcommand{\norm}[1]{\left\lVert#1\right\rVert}
\newcommand{\inparen}[1]{\left(#1\right)}             %\inparen{x+y}  is (x+y)
\newcommand{\inbraces}[1]{\left\{#1\right\}}           %\inbrace{x+y}  is {x+y}
\newcommand{\insquare}[1]{\left[#1\right]}             %\insquare{x+y}  is [x+y]
\newcommand{\inangle}[1]{\left\langle#1\right\rangle} %\inangle{A}    is <A>
\let\tilde\widetilde
\let\bar\overline
\newcommand{\parag}[1]{ \paragraph{\bf #1}}
\newcommand{\ud}{\mathrm{d}}
\DeclareMathOperator{\Var}{\rm Var}
\DeclareMathOperator{\Cov}{\rm Cov}
\newcommand{\diag}{\textnormal{diag}}
\newcommand{\Tr}{\mathop{\mathrm{Tr}}}
\newcommand{\Hess}{\mathop{\mathrm{Hess}}}
\def\vec#1{{\boldsymbol{#1}}}
\DeclarePairedDelimiterX{\expectarg}[1]{[}{]}{%
  \ifnum\currentgrouptype=16 \else\begingroup\fi
  \activatebar#1
  \ifnum\currentgrouptype=16 \else\endgroup\fi
}
\newcommand{\innermid}{\nonscript\;\delimsize\vert\nonscript\;}
\newcommand{\activatebar}{%
  \begingroup\lccode`\~=`\|
  \lowercase{\endgroup\let~}\innermid 
  \mathcode`|=\string"8000
}
\newcommand{\longsquiggly}{\xymatrix{{}\ar@{~>}[r]&{}}}
\newcommand{\bx}{\bm{x}}
\newcommand{\by}{\bm{y}}
\newcommand{\Eb}{\mathbf{E}}
\newcommand{\cA}{\mathcal{A}}
\newcommand{\cF}{\mathcal{F}}
\newcommand{\cG}{\mathcal{G}}
\newcommand{\cL}{\mathcal{L}}
\newcommand{\cN}{\mathcal{N}}
\newcommand{\btheta}{\bm{\theta}}
\newcommand{\blambda}{\bm{\lambda}}
\newcommand{\bxi}{\bm{\xi}}
\newcommand{\bSigma}{\bm{\Sigma}}
\newcommand{\LL}{\mathbb{L}}
\newcommand{\MM}{\mathbb{M}}
\newcommand{\PP}{\mathbb{P}}
\newcommand{\QQ}{\mathbb{Q}}
\newcommand{\RR}{\mathbb{R}}
\newcommand{\TT}{\mathbb{T}}
\algnewcommand\algorithmicphase{\textbf{Phase}}
\algnewcommand\PHASE{\item[\algorithmicphase]}
\DeclarePairedDelimiterX{\infdivx}[2]{(}{)}{%
  #1\;\delimsize\|\;#2%
}
\newtheorem{theorem}{Theorem}[section]
\newtheorem{corollary}[theorem]{Corollary}
\newtheorem{lemma}[theorem]{Lemma}
\newtheorem{proposition}[theorem]{Proposition}
\theoremstyle{definition}
\newtheorem{definition}{Definition}[section]
\newtheorem{example}{Example}[section]
\theoremstyle{remark}
\newtheorem*{remark}{Remark}
\newenvironment{proofof}[1]{\begin{trivlist} \item {\textbf{{Proof of
#1}.~~}}}
  {\qed\end{trivlist}}
\def\blfootnote{\xdef\@thefnmark{}\@footnotetext}
\pgfplotsset{compat=1.15}
\definecolor{ffffff}{rgb}{1,1,1}
\newcommand{\footremember}[2]{%
   \footnote{#2}
    \newcounter{#1}
    \setcounter{#1}{\value{footnote}}%
}
\newcommand{\footrecall}[1]{%
    \footnotemark[\value{#1}]%
}
\begin{document}

\title{Random projections beyond zero overlap}
\author{Timothy L.H. Wee\footremember{fn:author}{Department of Statistics and Data Science, Yale University. Email: timothy.wee@yale.edu, sekhar.tatikonda@yale.edu } \and Sekhar Tatikonda\footrecall{fn:author} }
\date{\today}

\maketitle

\begin{abstract}
    A random vector whose norm and overlap (inner product with an independent copy) concentrates is shown to have random low-dimensional projections that are approximately random Gaussians. Conversely, asymptotically random Gaussian projections imply these hypotheses. This extends and unites several existing results in geometric functional analysis and spin glasses. Applications include a large-system characterization of the joint law of cavity fields in the Sherrington-Kirkpatrick model.\blfootnote{\emph{{MSC2020 subject classifications.} 60K35, 60F05, 82B44}} \blfootnote{\emph{Key words and phrases.} Thin-shell, overlap concentration, random projections, random central limit theorems, Stein’s method, cavity fields.} 
\end{abstract}

\thispagestyle{empty}
%\blfootnote{\emph{{MSC2020 subject classifications.} 60K35, 60F05, 82B44}}
%\blfootnote{\emph{Key words and phrases.} Mean-field spin glass, asymptotic independence, overlap concentration, random projections, Stein's method, Sherrington-Kirkpatrick, Perceptron, Shcherbina-Tirozzi, Gardner.}
%\vspace{0em}

%\tableofcontents

\section{Introduction}
% Opening:
% \begin{itemize}
%     \item Applicability of random projections in projection pursuit, geometrical functional analysis, spin glasses
%     \item Give very informal statement of main result: TS+OC implies that its low dimensional projection is Gaussian, for most projection directions. Moreover, partial converse is true.
%     \item Explain at high-level how this unites the GAFA and spin glasses
%     \item Explain why proof strategy is interesting, explain that in non-zero overlap case, the joint convergence is to a non-diagonal covariance, so RHS is no longer product measure.
% \end{itemize}
% After this, go straight to main results.

Properties of low-dimensional projections of high-dimensional distributions are of fundamental importance in various fields of probability, statistics, computer science, and engineering. A non-exhaustive list of topics includes central limit theorems and large deviation principles for projections of high-dimensional bodies with geometric structure in geometric functional analysis \cite{meckes2012projections_3}, \cite{reeves2017conditional}, \cite{klartag2007central}, \cite{anttila2003central}, \cite{kim2022asymptotic}, \cite{gantert2017large}, projection pursuit \cite{diaconis1984asymptotics}, \cite{dumbgen2013low}, \cite{dasgupta2012concentration}, \cite{sudakov1978typical}, \cite{von1997sudakov}, and limit theorems for cavity/local fields and Thouless-Anderson-Palmer equations in spin glasses \cite{chen2010central}, \cite{chatterjee2010spin}, \cite[Sections 1.5-1.7]{talagrand2010mean}, \cite{chen2022tap}.

To set the stage, let $\vec{x}$ be a random vector in $\RR^N$, and let $\Theta = (\btheta_1,\dots, \btheta_k)$ be an $N \times k$ matrix of \emph{projection directions} so that $\Theta^\top \vec{x} \in \RR^k$ is the projection of $\vec{x}$ onto a $k$-dimensional subspace, $k < N$. For instance, we can take $\Theta$ to be a matrix whose columns constitute a random orthonormal basis for a $k$-dimensional subspace in $\RR^N$. Alternatively, if $k$ is not growing too fast with with $N$, this is approximately equivalent to taking the columns of $\Theta$ to be independent and identically distributed (iid) Gaussian vectors with zero mean and covariance $N^{-1} I$. In this paper we will assume the Gaussian projections setting.

Our main result shows that if $\vec{x}$ satisfies the \emph{thin-shell} and \emph{overlap concentration} hypotheses \eqref{eq:opening_TSOC_hypotheses}, then the distribution of the projection $\Theta^\top \vec{x}$ will be close to a \emph{random} Gaussian distribution for large $N$ and for most projection directions $\Theta$. Moreover, a partial converse is obtained, in the sense that if the asymptotic distribution of $\Theta^\top \vec{x}$ is indeed this random Gaussian, then the thin-shell and overlap concentration hypotheses are also true. 

More specifically, let $\vec{x}^1, \vec{x}^2$ be independent copies of $\vec{x}$. If for some constants $0 \leq q < \rho$ it holds that
\begin{align}
    \frac{1}{N} \norm{\vec{x}}^2 \simeq \rho, \quad \textnormal{and} \quad\quad \frac{1}{N} \vec{x}^1\boldsymbol{\cdot} \vec{x}^2 \simeq q, \quad \textnormal{both in } L_2,
    \label{eq:opening_TSOC_hypotheses}
\end{align}
then with $\vec{z}$, $\bxi$ denoting independent standard Gaussian random vectors in $\RR^k$, 
\begin{align}
    \cL\!\inparen{\Theta^\top \vec{x} \, | \, \Theta} \simeq \cL\inparen{ \sqrt{q} \vec{z} + \sqrt{\rho - q} \bxi \, | \, \vec{z}  }, \quad \textnormal{w.h.p.~$\Theta$ and $\vec{z}$},
    \label{eq:opening_mainResult_asympForm}
\end{align}
where $\simeq$ is closeness over integrals of some class of test functions. Moreover, if \eqref{eq:opening_mainResult_asympForm} holds, then \eqref{eq:opening_TSOC_hypotheses} is true asymptotically, where $\simeq$ holds ``in probability''.

Furthermore, we also obtain a useful `intermediate form' for the approximate projection distribution:
\begin{align}
    \cL\!\inparen{\Theta^\top \vec{x} \, | \, \Theta} \simeq \cL\inparen{  \Theta^\top\! \inangle{\vec{x}} + \sqrt{\rho - q} \bxi \, | \, \Theta  }, \quad \textnormal{w.h.p.~$\Theta$},
    \label{eq:opening_mainResult_partialLimitForm}
\end{align}
where $\inangle{\vec{x}}$ is the mean vector of $\vec{x}$.

The norm concentration in \eqref{eq:opening_TSOC_hypotheses}, also called thin-shell, is a fundamental notion in random projections. For instance, under symmetry conditions on the distribution of $\vec{x}$ (for instance isotropy---where $\Cov \vec{x} = I_N$), it is known that thin-shell characterizes the low-dimensional Gaussian behavior \cite{anttila2003central}, \cite{klartag2007central}. 
%Furthermore, if $N^{-1}\norm{\vec{x}}^2$ does not converge to a point mass, then the projection will be approximately a scale-mixture of Gaussians, with mixing measure given by the law of $N^{-1}\norm{\vec{x}}^2$. 

On the other hand, the overlap concentration in the second equation of \eqref{eq:opening_TSOC_hypotheses} is perhaps less-understood. Importantly, it can be shown that the non-zero overlap concentration generalizes the previously considered settings of isotropy and bounded covariance eigenvalues (see Section \ref{sec:further_background}, and for papers in this setting see \cite{meckes2012projections_3}, \cite{bobkov2010theorem}). The existing projection results concerning overlap either (1) assume it is zero ($q = 0$ in \eqref{eq:opening_TSOC_hypotheses}) \cite{diaconis1984asymptotics}, \cite{reeves2017conditional}, \cite{dumbgen2013low}, or (2) allow non-zero overlap concentration but are restricted to specific spin glass measures and are limited to one-dimensional projections ($k = 1$) \cite{chen2010central}, \cite{chatterjee2010spin}, \cite[Theorem 1.7.11]{talagrand2010mean}. Our main result is informally:
\begin{align*}
    &\textnormal{\emph{We extend and unite the existing projection results}} \\
    &\textnormal{ \emph{in the spin glass and random projections literature to simultaneously} } \\
    &\textnormal{ \emph{allow for non-zero overlaps as well as multi-dimensional } } \\
    &\textnormal{ \emph{and distribution-free projections with non-asymptotic rates.} }
\end{align*}

The results in this paper can be construed as a general answer to \cite[Research Problem 1.7.12]{talagrand2010mean} (which was already answered specifically for the Sherringon-Kirkpatrick model in \cite{chen2010central}).

Another insight provided by this paper is that in the proof, we extend a technique introduced by Hoeffding \cite[Theorem 3.3]{hoeffding1952large}, and further developed in \cite[Lemma 4.1]{dumbgen2013low}, that characterizes the weak convergence of conditional distributions and relates it to the weak convergence of appropriately replicated unconditional distributions. More precisely, they showed that for some probability measure $\QQ$,
\begin{align}
    \cL(\Theta^\top \vec{x}^1, \Theta^\top \vec{x}^2) \longsquiggly \QQ \otimes \QQ \quad \Longleftrightarrow \quad \cL(\Theta^\top \vec{x} \, | \, \Theta) \longsquiggly \QQ \;\; \textnormal{w.h.p.~$\Theta$},
    \label{eq:opening_Hoeffding}
\end{align}
where `$\longsquiggly$' is weak convergence. In the non-zero overlap setting, the LHS fails as the joint distribution does not converge to a product measure. However, this is overcome by implementing a `conditional' version of the Hoeffding technique. Additionally, it is shown that this technique can produce non-asymptotic rates.

%where to put this
\parag{Organization.} The rest of the introduction is organized as follows: we state in Section \ref{sec:main_results} the main results; we outline the main proof ideas in Section \ref{sec:outline}; and we give additional background in Section \ref{sec:further_background}.

In Section \ref{sec:proof_main_result} we prove the main result Theorem \ref{thm:ProjResult_DisorderedCase_supOverBL_LM_2p}; and in Section \ref{sec:examples} some applications are given; and finally in Section \ref{sec:partial_converse} we prove the partial converse Proposition \ref{prop:projResult_converse}.

\subsection{Main results}
\label{sec:main_results}

The high-dimensional objects to be projected are random vectors $\vec{x}$ in $\RR^N$ with expectations denoted by $\inangle{\cdot}$. The random projection directions are $N \times k$ matrices $\Theta$ with iid entries of zero-mean Gaussians with variance $1/N$. The random vector $\vec{x}$ and projection direction $\Theta$ are always assumed to be independent. The projection dimension $k < N$ is a positive integer that is fixed and not growing with $N$. In what follows $\Eb_{Y}$ is used to mean that the expectation is taken with respect to the random element $Y$ only.

% Our main result is Theorem \ref{thm:ProjResult_DisorderedCase_supOverBL_LM_2p} which bounds the Wasserstein distance between the conditional law of $\Theta^T X$ and the conditional law of a $k$-dimensional Gaussian random vector with mean $\Theta^T\inangle{X}$ and covariance $(\rho - q) I_k$, given $\Theta$ and the disorder.

Our main result is as follows.

\begin{theorem}
Let $0 \leq q < \rho$ be constants such that
\begin{align}
    \inangle{ \inparen{\frac{\norm{\vec{x}}^2}{N} - \rho}^{2}  } \leq c_1, \quad \inangle{ \inparen{\frac{\vec{x}^1 \boldsymbol{\cdot} \vec{x}^2}{N} - q}^{2}  } \leq c_2,
    \label{eq:projResult_TSOC_hypotheses}
\end{align}
for some numbers $c_1$, $c_2$ that may depend on $N$. Let $\bxi$ be a standard Gaussian vector in $\RR^k$, independent of all other sources of randomness. Then for every integer $p \geq 1$, for constants $K_1, K_2 > 0$ that depend on $p, \rho, q$, and for $d_1, d_2$ defined below in \eqref{eq:d1_d2_rates}, the following happens.
\begin{enumerate}
    \item We have
    \begin{align}
        &\sup_{\substack{\norm{g}_{\textnormal{Lip}} \leq L \\ \norm{g}_{\infty} \leq M} } \Eb_{\Theta}  \insquare{ \inparen{ \inangle{g(\Theta^\top \vec{x})  } - \Eb_{\bxi} \insquare{  g\inparen{\Theta^\top\!\inangle{\vec{x}} + \sqrt{\rho - q}\bxi  }  }  }^{2p}  } \nonumber\\
        &\quad\quad\quad\quad\quad\quad\quad\quad\quad\quad\quad\quad\quad \leq \frac{K_1 k L M^{2p - 1} }{N - 1} \inparen{ d_1(c_1) +  d_2(c_2) \boldsymbol{1}_{q > 0} + N\inparen{c_2}^{1/4}\boldsymbol{1}_{q = 0}  }.
        \label{eq:projResult_partialAsymptotic}
        % I removed the \frac{1 + \sqrt{q}}{\sqrt{q}} in front of d_2
    \end{align}
    \item Furthermore, let $\vec{z} \in \RR^k$ be a standard Gaussian vector, independent of all other sources of randomness, then 
    \begin{align}
        &\sup_{\substack{\norm{g}_{\textnormal{Lip}} \leq L \\ \norm{g}_{\infty} \leq M} } \Eb_{\Theta}\Eb_{\vec{z}}  \insquare{ \inparen{ \inangle{g(\Theta^\top \vec{x})  } - \Eb_{\bxi} \insquare{  g\inparen{\sqrt{q}\vec{z} + \sqrt{\rho - q}\bxi  }  }  }^{2p}  }  \leq \frac{K_2 k L M^{2p - 1} }{N - 1} \inparen{ d_1(c_1) + d_2(c_2)  }.
        \label{eq:projResult_asymptotic}
    \end{align}
\end{enumerate}
Here, $d_1, d_2 : \RR_+ \rightarrow \RR_+$ are defined by
\begin{align}
    d_1(y) := \sqrt{3N^2 y + 4N\rho \sqrt{y} + 2N\rho^2 }, \quad 
    d_2(y) := \sqrt{3N^2 y + 4Nq\sqrt{y} + 2Nq^2 }.
    \label{eq:d1_d2_rates}
\end{align}
\label{thm:ProjResult_DisorderedCase_supOverBL_LM_2p}
\end{theorem}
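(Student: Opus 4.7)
The plan is to execute a replica-based moment expansion combined with a Gaussian coupling argument. Let $F(\Theta) := \inangle{g(\Theta^\top \vec{x})}$ and $G(\Theta) := \Eb_\bxi[g(\Theta^\top \inangle{\vec{x}} + \sqrt{\rho - q}\bxi)]$. The first step is to expand $(F(\Theta) - G(\Theta))^{2p}$ by the binomial theorem, yielding terms $F^{2p-m}G^m$. Unpacking each, I introduce $2p-m$ iid replicas $\vec{x}^i$ of $\vec{x}$ and $m$ iid auxiliary standard Gaussians $\bxi_j$ in $\RR^k$, and write $F^{2p-m}G^m$ as the expectation of $\prod_l g(\vec{V}_l)$ where each $\vec{V}_l$ is either $\Theta^\top \vec{x}^i$ or $\Theta^\top \inangle{\vec{x}} + \sqrt{\rho-q}\bxi_j$. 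Conditional on the $\vec{x}^i$'s and $\bxi_j$'s, averaging over $\Theta$ makes $(\vec{V}_1, \ldots, \vec{V}_{2p})$ a centered Gaussian on $\RR^{2pk}$ whose covariance $\Sigma$ is assembled block-wise from inner products such as $\vec{x}^i \cdot \vec{x}^{i'}/N$, $\norm{\vec{x}^i}^2/N$, $\vec{x}^i \cdot \inangle{\vec{x}}/N$, and $\norm{\inangle{\vec{x}}}^2/N + (\rho-q)\delta_{jj'}$.

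Second, I introduce a reference ``ideal'' Gaussian: let $\vec{z}, \bxi_1', \ldots, \bxi_{2p}'$ be iid standard Gaussians in $\RR^k$ and set $\vec{U}_l := \sqrt{q}\vec{z} + \sqrt{\rho-q}\bxi_l'$ for $l = 1, \ldots, 2p$. The joint law of $(\vec{U}_l)_{l=1}^{2p}$ is a centered exchangeable Gaussian with diagonal blocks $\rho I_k$ and off-diagonal blocks $q I_k$, so the reference integral $\Eb[\prod_l g(\vec{U}_l)]$ does not depend on the split parameter $m$. The binomial sum of signed reference integrals therefore collapses via $(1-1)^{2p} = 0$, and the left-hand side of \eqref{eq:projResult_partialAsymptotic} reduces to a signed sum over $m$ of differences $\Eb_{\vec{x}, \bxi}\Eb_\Theta[\prod_l g(\vec{V}_l)] - \Eb[\prod_l g(\vec{U}_l)]$.

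Each such Gaussian-to-Gaussian difference I would control by a Lipschitz coupling: representing the two Gaussians as $\vec{V} = A\vec{W}$ and $\vec{U} = A^*\vec{W}$ via a common iid standard Gaussian $\vec{W}$ (with $AA^\top = \Sigma$, $A^*(A^*)^\top = \Sigma^*$), one has $\abs{\Eb[h(\vec{V})] - \Eb[h(\vec{U})]} \leq \norm{h}_{\textnormal{Lip}} \norm{A - A^*}_F$ for $h = \prod_l g$, whose Lipschitz norm is bounded by $\sqrt{2p}\,L M^{2p-1}$. The Frobenius distance $\norm{A - A^*}_F$ is controlled in turn by $\norm{\Sigma - \Sigma^*}_F$ via standard spectral bounds; the block-level deviations $\vec{x}^i \cdot \vec{x}^{i'}/N - q$ and $\norm{\vec{x}^i}^2/N - \rho$ have second moments $\leq c_2, c_1$ by \eqref{eq:projResult_TSOC_hypotheses}, and careful accounting of the associated second- and fourth-moment quantities yields the $d_1, d_2$ combinations. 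The $c_2^{1/4}\boldsymbol{1}_{q=0}$ term replaces $d_2(c_2)$ when $q = 0$ because the spectral bound passing from $\norm{\Sigma - \Sigma^*}_F$ to $\norm{A - A^*}_F$ degenerates: matching $\sqrt{\norm{\inangle{\vec{x}}}^2/N}$ to $\sqrt{q} = 0$ forces an extra square root of the $L_2$-deviation, giving only $c_2^{1/4}$.

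The main obstacle I anticipate is the uniform, careful bookkeeping across the binomial terms and the spectral control relating $\norm{A - A^*}_F$ to $\norm{\Sigma - \Sigma^*}_F$; extracting the precise $1/(N-1)$ prefactor likely emerges from a dimension-specific Gaussian covariance identity that must be unpacked in detail. For Part 2 \eqref{eq:projResult_asymptotic}, I would chain the intermediate-form bound of Part 1 with a Gaussian-to-Gaussian shift comparison between $\cN(0, (\norm{\inangle{\vec{x}}}^2/N + (\rho-q))I_k)$ and $\cN(0, \rho I_k)$: since $\norm{\inangle{\vec{x}}}^2/N = \inangle{\vec{x}^1 \cdot \vec{x}^2}/N$ differs from $q$ by at most $\sqrt{c_2}$, this extra comparison contributes an additional term of order $\frac{k L M^{2p-1}}{N-1} d_2(c_2)$, accounting for the additional $d_2(c_2)$ that appears in \eqref{eq:projResult_asymptotic} relative to \eqref{eq:projResult_partialAsymptotic}.
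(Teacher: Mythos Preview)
Your high-level architecture matches the paper's: the binomial expansion of $(F-G)^{2p}$, the introduction of an exchangeable reference Gaussian $(\vec{U}_l)_{l\le 2p}$, the collapse of the signed sum via $\sum_r(-1)^{2p-r}\binom{2p}{r}=0$, and the reduction to bounding each residual difference against a product test function of Lipschitz norm $O(LM^{2p-1})$ are exactly what the paper does in Section~2.2. Where you diverge is the engine used to bound those residual Wasserstein distances. You propose to condition on the $\vec{x}^i$'s, recognize the resulting law over $(\Theta,\bxi)$ as a centered Gaussian with a random covariance $\Sigma$, and compare to the ideal $\Sigma^*=R^{2p}_{\rho,q}\otimes I_k$ via a square-root coupling $\|\Sigma^{1/2}-(\Sigma^*)^{1/2}\|_F$. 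The paper instead never conditions on $\vec{x}$: it bounds $\textnormal{W}(\PP_N,\QQ)$ directly by Stein's method with an infinitesimal exchangeable pair, rotating each $\btheta_j$ by $\arcsin\epsilon$ in a Haar-random two-plane and invoking the Meckes/Reinert--R\"ollin theorem. The $\tfrac{1}{N-1}$ prefactor and the exact $d_1,d_2$ expressions come out of Haar fourth-moment identities such as $\Eb[(u_{i1}u_{k2}-u_{i2}u_{k1})(u_{j1}u_{\ell2}-u_{j2}u_{\ell1})]=\tfrac{2}{N(N-1)}(\delta_{ij}\delta_{k\ell}-\delta_{i\ell}\delta_{jk})$; your route will not reproduce that specific form, though it should give a bound of the shape $K(p,\rho,q)\sqrt{k}\,LM^{2p-1}(\sqrt{c_1}+\sqrt{c_2})$, which is of the same order when $c_1,c_2\asymp 1/N$. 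Your approach is more elementary and avoids Stein's method entirely; the paper's buys the explicit constants actually stated in the theorem.

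Three smaller corrections. First, conditional on \emph{both} the $\vec{x}^i$'s and the $\bxi_j$'s the vector $(\vec{V}_l)$ is not centered (the $\bxi$-type blocks have mean $\sqrt{\rho-q}\,\bxi_j$); you must condition only on the $\vec{x}^i$'s and integrate $(\Theta,\bxi)$ jointly---your own covariance formula $\|\inangle{\vec{x}}\|^2/N+(\rho-q)\delta_{jj'}$ already presumes this. Second, the passage from $\|\Sigma-\Sigma^*\|_F$ to $\|\Sigma^{1/2}-(\Sigma^*)^{1/2}\|_F$ is the delicate point: the random $\Sigma$ can be singular, and unconditional inequalities (Powers--St{\o}rmer) cost a square root; you will need a one-sided bound exploiting $\lambda_{\min}(\Sigma^*)=\rho-q>0$, which does exist but should be stated carefully. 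Third, your chaining is reversed relative to the paper: there, Part~2 is the direct statement (via $\PP_N\to\QQ$), and Part~1 is obtained from it by an extra triangle through $\QQ_N\to\QQ$---this is precisely where the $Nc_2^{1/4}\boldsymbol{1}_{q=0}$ term enters, not the other way around.
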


The above non-asymptotic rates can be turned into asymptotic statements about the convergence of random probability measures. To state the result, we first explain some terminology.
\begin{definition}
    Let $\inparen{\Omega, \cF, \PP}$ be a probability space. Say that $\inparen{\mu_{N}}_{N \geq 1}$ is a sequence of \emph{random probability measures} on a Polish space $\Sigma$ if $\mu_{N} = \mu_{N,(\cdot)}$ defines a mapping from $\Omega$ into $M_1(\Sigma)$, where $M_1(\Sigma)$ is the space of Borel probability measures on $\Sigma$, equipped with the topology of weak convergence.
\end{definition}

Define the following notion of weak convergence for random measures.
\begin{definition}
    Let $\inparen{\mu_N}_{N \geq 1}$ and $\mu$ be random probability measures on $\RR^k$. Let $\MM_N$ and $\MM$ denote the laws of $\mu_N$ and $\mu$ respectively; i.e.~$\MM_N\insquare{B} = \PP\inbraces{ \omega: \mu_{N,\omega} \in B  }$, for some measurable set $B \subseteq M_1(\RR^k)$. For each $N$, denote by $\LL_N$ the joint law of $\MM_N$ and $\MM$. We say that $\mu_N$ \emph{converges weakly in probability} to $\mu$, and write $\mu_N \overset{\textnormal{in prob.}}{\longsquiggly } \mu$, if for all continuous and bounded functions (or equivalently all bounded Lipschitz functions) $f : \RR^k \rightarrow \RR$, it holds that for all $\epsilon > 0$,
    \begin{align*}
        \LL_N \inbraces{  \omega : \abs{\mu_{N,\omega} f - \mu_{\omega} f } > \epsilon  } \longrightarrow 0, \quad \textnormal{as } N \rightarrow \infty.
    \end{align*}
\end{definition}
In what follows, we find it helpful to suppress the dependence of the random measures on $\omega$, and say that $\mu_N \overset{\textnormal{in prob.}}{\longsquiggly } \mu$, if for all continuous bounded $f$,     
\begin{align*}
    \mu_N\insquare{f} \xrightarrow{\textnormal{in prob.}} \mu\insquare{f}, \quad 
\end{align*} 
where the `probability is with respect to the joint measure of the randomness in $\mu_N$ and $\mu$'.

% Old definition of `convergence weakly in prob'
% \begin{definition}
%     Let $\inparen{\mu_N}_{N \geq 1}$ and $\mu$ be random probability measures on $\RR^k$. We say that $\mu_N$ \emph{converges weakly in probability} to $\mu$, and write $\mu_N \overset{\textnormal{in prob.}}{\longsquiggly } \mu$ if for all continuous and bounded functions (or equivalently all bounded Lipschitz functions) $f : \RR^k \rightarrow \RR$, it holds that
%     \begin{align*}
%         \mu_N\insquare{f} \xrightarrow{\textnormal{in prob.}} \mu\insquare{f}, \quad \textnormal{as } N \rightarrow \infty,
%     \end{align*} 
%     where the probability is with respect to the joint measure of the randomness in $\mu_N$ and $\mu$. 
% \end{definition}

By choosing $L = M = 1$, $p = 1$ in \eqref{eq:projResult_asymptotic} and using Chebyshev's inequality, we have the following asymptotic statement.

\begin{corollary}
    In the notation of Theorem \ref{thm:ProjResult_DisorderedCase_supOverBL_LM_2p}, suppose \eqref{eq:projResult_TSOC_hypotheses} holds with $c_1, c_2 = K/N$. Then 
    \begin{align}
        \cL(\Theta^\top \vec{x} \, | \, \Theta) \overset{\textnormal{in prob.}}{\longsquiggly } \cL(\sqrt{q}\vec{z} + \sqrt{\rho - q}\bxi \, | \, \vec{z} )
        \label{eq:projResult_asymptotic_ifDirection}
    \end{align}
    where the probability is with respect to the joint product measure of $\cL(\Theta)$ and  $\cL(\vec{z})$. 
\end{corollary}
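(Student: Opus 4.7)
The corollary is designed to follow directly from the quantitative bound \eqref{eq:projResult_asymptotic}; the plan is to extract an $L^2$ rate of convergence for each bounded Lipschitz test function and upgrade it to convergence in probability using Chebyshev's inequality and rescaling.

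First I would specialize \eqref{eq:projResult_asymptotic} to $L = M = 1$, $p = 1$, which yields
\begin{align*}
\sup_{\substack{\norm{g}_{\textnormal{Lip}} \leq 1 \\ \norm{g}_{\infty} \leq 1} } \Eb_{\Theta}\Eb_{\vec{z}}  \insquare{ \inparen{ \inangle{g(\Theta^\top \vec{x})  } - \Eb_{\bxi}\!\insquare{  g\inparen{\sqrt{q}\vec{z} + \sqrt{\rho - q}\bxi  }  }  }^{2} } \leq \frac{K_2 k}{N - 1}\inparen{d_1(c_1) + d_2(c_2)}.
\end{align*}
With $c_1 = c_2 = K/N$ plugged into \eqref{eq:d1_d2_rates}, each of $d_1(K/N)$ and $d_2(K/N)$ simplifies to $\sqrt{3KN + 4\rho\sqrt{KN} + 2N\rho^2} = O(\sqrt{N})$ (and analogously with $\rho$ replaced by $q$), so the entire right-hand side is $O(k/\sqrt{N}) \to 0$ as $N \to \infty$.

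Next, for a general bounded Lipschitz $f: \RR^k \to \RR$, I would set $C = \norm{f}_{\textnormal{Lip}} \vee \norm{f}_{\infty} \vee 1$ and apply the above bound to $g = f/C$. Multiplying through by $C^2$ shows that
\begin{align*}
\Eb_{\Theta}\Eb_{\vec{z}}\insquare{ \inparen{ \inangle{f(\Theta^\top \vec{x})} - \Eb_{\bxi}\!\insquare{f\inparen{\sqrt{q}\vec{z} + \sqrt{\rho-q}\bxi}} }^{2} } \longrightarrow 0.
\end{align*}
By Chebyshev's inequality, for every $\epsilon > 0$,
\begin{align*}
(\PP_\Theta \otimes \PP_{\vec{z}})\!\inbraces{ \abs{ \inangle{f(\Theta^\top \vec{x})} - \Eb_{\bxi}\!\insquare{f\inparen{\sqrt{q}\vec{z} + \sqrt{\rho-q}\bxi}} } > \epsilon } \longrightarrow 0.
\end{align*}

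Finally, I would identify the two sides with the test-function evaluations of the random probability measures in question. Setting $\mu_N = \cL(\Theta^\top \vec{x} \mid \Theta)$ and $\mu = \cL(\sqrt{q}\vec{z} + \sqrt{\rho-q}\bxi \mid \vec{z})$, the inequality above reads $\mu_N[f] \to \mu[f]$ in probability with respect to the joint product law of $\cL(\Theta)$ and $\cL(\vec{z})$. Since this holds for every bounded Lipschitz $f$ and bounded Lipschitz functions determine weak convergence, the definition of $\overset{\textnormal{in prob.}}{\longsquiggly}$ is satisfied, giving \eqref{eq:projResult_asymptotic_ifDirection}. No step presents a genuine obstacle: the only mild subtlety is the rescaling of test functions, since Theorem \ref{thm:ProjResult_DisorderedCase_supOverBL_LM_2p} is stated in terms of the sup norm and Lipschitz constant simultaneously rather than for a fixed normalization.
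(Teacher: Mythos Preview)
Your proposal is correct and follows exactly the approach indicated in the paper, which simply says to take $L=M=1$, $p=1$ in \eqref{eq:projResult_asymptotic} and apply Chebyshev's inequality. You have filled in the details the paper omits---the explicit $O(\sqrt{N})$ computation for $d_1,d_2$ and the rescaling of a general bounded Lipschitz test function---but these are routine and your handling of them is fine.
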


A partial converse to Theorem \ref{thm:ProjResult_DisorderedCase_supOverBL_LM_2p} is available in the following form.

\begin{proposition}
    Suppose \eqref{eq:projResult_asymptotic_ifDirection} holds for any $k$, then
    \begin{align}
        \frac{\norm{\vec{x}}^2}{N} \xrightarrow{\textnormal{in prob.}} \rho, \quad \textnormal{ and }\quad  \frac{\vec{x}^1 \boldsymbol{\cdot} \vec{x}^2}{N} \xrightarrow{\textnormal{in prob.}} q.
        \label{eq:projResult_converse_inProb}
    \end{align}
    If additionally, for some $r \in (0, \infty)$, it holds that
    \begin{align}
        \sup_{N \geq 1} \inangle{ \abs{ \frac{\norm{\vec{x}}^2}{N} }^r } < + \infty,
        \label{eq:projResult_converse_UI_in_r_hypothesis}
    \end{align}
    then also
    \begin{align}
        \frac{\norm{\vec{x}}^2}{N} \xrightarrow{L_r} \rho, \quad \textnormal{ and }\quad  \frac{\vec{x}^1 \boldsymbol{\cdot} \vec{x}^2}{N} \xrightarrow{L_r} q.
        \label{eq:projResult_converse_inLr}
    \end{align}
    \label{prop:projResult_converse}
\end{proposition}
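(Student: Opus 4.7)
The plan is to specialize the weak-in-probability convergence $\cL(\Theta^\top\vec x\,|\,\Theta)\overset{\textnormal{in prob.}}{\longsquiggly}\cL(\sqrt{q}\,\vec z+\sqrt{\rho-q}\bxi\,|\,\vec z)$ to complex-exponential test functions. After averaging over the Gaussian $\Theta$ the resulting characteristic-function identities become convergence of Laplace transforms of $\norm{\vec x}^2/N$ and $\vec x^1\cdot\vec x^2/N$, from which thin-shell and overlap concentration follow by uniqueness; the $L_r$ upgrade at the end is then a standard uniform-integrability step.

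For the thin-shell claim, take $k=1$ so that $\Theta=\btheta$, and apply the hypothesis to the bounded continuous functions $\cos(ty)$, $\sin(ty)$, i.e.~to $f_t(y)=e^{ity}$. The hypothesis gives
\[
\inangle{e^{it\btheta\cdot\vec x}}-e^{it\sqrt{q}\,\vec z-(\rho-q)t^2/2}\;\xrightarrow{\textnormal{in prob.}}\;0
\]
in the joint law of $(\btheta,\vec z)$ with $\btheta\perp\vec z$. Both quantities are bounded by $1$, so dominated convergence permits taking $\Eb_\btheta\Eb_{\vec z}$. The Gaussian integrations $\Eb_\btheta e^{it\btheta\cdot\vec x}=e^{-t^2\norm{\vec x}^2/(2N)}$ (with $\vec x$ held fixed) and $\Eb_{\vec z}e^{it\sqrt{q}\vec z}=e^{-qt^2/2}$ then yield
\[
\inangle{e^{-t^2\norm{\vec x}^2/(2N)}}\longrightarrow e^{-\rho t^2/2}\qquad\text{for every }t\ge 0.
\]
By the continuity theorem for Laplace transforms, $\norm{\vec x}^2/N$ converges in distribution under $\inangle\cdot$ to the constant $\rho$, which (the limit being constant) is equivalent to convergence in probability.

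For the overlap, introduce independent replicas $\vec x^1,\vec x^2$. By replica independence the random conditional law of $(\btheta\cdot\vec x^1,\btheta\cdot\vec x^2)$ given $\btheta$ is the product of two copies of $\cL(\btheta\cdot\vec x\,|\,\btheta)$, and on the limit side it is the product of two copies of $\cL(\sqrt{q}\,\vec z+\sqrt{\rho-q}\bxi\,|\,\vec z)$ with the \emph{same} $\vec z$. Applying the previous step to each of the separable factors of $e^{i(t_1 y_1+t_2 y_2)}$, together with the elementary fact that products of bounded sequences converging in probability themselves converge in probability, gives
\[
\inangle{e^{it_1\btheta\cdot\vec x^1}}\inangle{e^{it_2\btheta\cdot\vec x^2}}\;\xrightarrow{\textnormal{in prob.}}\;e^{i(t_1+t_2)\sqrt{q}\,\vec z-(\rho-q)(t_1^2+t_2^2)/2}.
\]
Taking expectations by dominated convergence and performing the Gaussian integrals as before yields
\[
\inangle{e^{-\norm{t_1\vec x^1+t_2\vec x^2}^2/(2N)}}\longrightarrow e^{-\tfrac12[\rho t_1^2+2q t_1 t_2+\rho t_2^2]}.
\]
Specializing to $t_1=t_2=t$ and to $t_1=-t_2=t$ gives Laplace-transform convergence of $\norm{\vec x^1\pm\vec x^2}^2/N$ to $2(\rho\pm q)$ respectively, and the polarization identity delivers $\vec x^1\cdot\vec x^2/N\xrightarrow{\textnormal{in prob.}}q$.

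Finally, the $L_r$ upgrade under $\sup_N\inangle{\abs{\norm{\vec x}^2/N}^r}<\infty$ is by the standard route: this moment bound together with de la Vall\'ee Poussin gives uniform integrability of the relevant powers, which combined with convergence in probability delivers $L_r$ convergence of $\norm{\vec x}^2/N$; the elementary bound $\abs{\vec x^1\cdot\vec x^2}^r\le\norm{\vec x^1}^r\norm{\vec x^2}^r\le\tfrac12(\norm{\vec x^1}^{2r}+\norm{\vec x^2}^{2r})$ transfers the moment hypothesis to the overlap. The step I expect to require the most care is the two-replica reduction, where one must check that weak-in-probability convergence of the random marginal measures $\cL(\btheta\cdot\vec x\,|\,\btheta)$ is compatible with taking the independent product against separable test functions; this is exactly where the ``conditional'' form of the Hoeffding principle noted in the introduction appears.
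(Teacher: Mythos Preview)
Your proof is correct and follows the same overall strategy as the paper: pass from the weak-in-probability hypothesis to convergence of characteristic functions of the projections, average out the Gaussian $\btheta$ to obtain Laplace transforms of $\norm{\vec x}^2/N$ (and of $\norm{t_1\vec x^1+t_2\vec x^2}^2/N$), and invoke the continuity theorem for Laplace transforms. Your two-replica reduction---products of bounded sequences converging in probability, followed by dominated convergence---is exactly the content of the paper's Lemma~\ref{lemma:hoeffding_onlyIf}, carried out inline for the specific test functions you need.

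The one substantive difference is in how the overlap concentration is extracted from the joint Laplace-type identity. The paper takes the particular linear combination
\[
e^{2q}H_N(1,1,2)+e^{-2q}H_N(1,1,-2)-2\,H_N(1,0,0)^2,
\]
recognizes it as $2\,\Eb\!\bigl[e^{-\norm{\vec x^1}^2/N-\norm{\vec x^2}^2/N}\bigl(\cosh(2\vec x^1\!\cdot\vec x^2/N-2q)-1\bigr)\bigr]$, and uses nonnegativity and monotonicity of $\cosh-1$ to bound $\PP\bigl[|\vec x^1\!\cdot\vec x^2/N-q|\ge\epsilon\bigr]$ directly. You instead specialize to $t_1=\pm t_2=t$, obtain Laplace-transform convergence for the \emph{nonnegative} quantities $\norm{\vec x^1\pm\vec x^2}^2/N$, deduce $\norm{\vec x^1\pm\vec x^2}^2/N\to 2(\rho\pm q)$ in probability by reusing the thin-shell argument verbatim, and finish by polarization. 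Your route is cleaner and avoids any ad hoc combination; the paper's $\cosh$ device bypasses a second appeal to the Laplace continuity theorem and goes straight to the tail probability, but is less transparent.

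One minor caution on the $L_r$ step, which both you and the paper treat as routine: boundedness of $r$-th moments gives uniform integrability of $r'$-th powers only for $r'<r$, so de~la~Vall\'ee~Poussin as you invoke it literally yields $L_{r'}$ convergence for every $r'<r$ rather than $L_r$ itself. The paper's one-line ``standard arguments'' has the same looseness, so this is a feature of the statement rather than a defect of your proposal.
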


Notions of thin-shell and overlap concentration are prevalent in the high-temperature phases of spin glass theory. It is then useful to give the form of the main result Theorem \ref{thm:ProjResult_DisorderedCase_supOverBL_LM_2p} in the \emph{disordered} setting. The following two results are were used in a companion paper \cite{wee2022local}; they are not needed for understanding the rest of the paper.

In the disordered setting, the distribution $\inangle{\cdot}$ of $\vec{x}$ is itself a random (Gibbs) measure, and is defined conditionally upon the disorder, with randomness denoted by $\Eb_{\ud}$. In what follows we will always consider the `cavity-field' setting, where the disorder is assumed to be independent of the projection directions $\Theta$. The does not bring much additional difficulty in the proof, which will not be repeated---in the arguments in Section \ref{sec:proof_main_result}, we simply replace all occurrences of $\Eb_{\Theta}$ (resp.~$\Eb_{\Theta} \Eb_{\vec{z}}$) with $\Eb_{\Theta} \Eb_\ud$ (resp.~$\Eb_{\Theta} \Eb_{\vec{z}} \Eb_\ud$).

\begin{corollary}
    Let $0 \leq q < \rho$ be constants such that
\begin{align}
    \Eb_{\textnormal{d}} \inangle{ \inparen{\frac{\norm{\vec{x}}^2}{N} - \rho}^{2}  } \leq c_1, \quad \Eb_{\textnormal{d}} \inangle{ \inparen{\frac{\vec{x}^1 \boldsymbol{\cdot} \vec{x}^2}{N} - q}^{2}  } \leq c_2,
    \label{eq:projResult_TSOC_hypotheses_disordered}
\end{align}
for some numbers $c_1$, $c_2$ that may depend on $N$. Then
\begin{enumerate}
    \item \eqref{eq:projResult_partialAsymptotic} holds with $\Eb_{\Theta}$ replaced by $\Eb_{\Theta} \Eb_\ud$, and
    \item \eqref{eq:projResult_asymptotic} holds with $\Eb_{\Theta} \Eb_{\vec{z}}$ replaced by $\Eb_{\Theta} \Eb_{\vec{z}} \Eb_\ud$.
\end{enumerate}
\label{thm:projResult_disordered}
\end{corollary}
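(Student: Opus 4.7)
The plan is to reduce to the non-disordered Theorem \ref{thm:ProjResult_DisorderedCase_supOverBL_LM_2p} by conditioning on the disorder. First I would fix an arbitrary realization of the disorder, so that $\inangle{\cdot}$ becomes a deterministic probability measure on $\RR^N$ and (by hypothesis) $\vec{x}$ under this measure is independent of $(\Theta, \vec{z}, \bxi)$. Define the disorder-dependent random variables
\[ \tilde c_1 := \inangle{ \inparen{ \norm{\vec{x}}^2/N - \rho }^2 }, \qquad \tilde c_2 := \inangle{ \inparen{ \vec{x}^1 \boldsymbol{\cdot} \vec{x}^2/N - q }^2 }, \]
which are nonnegative and, by \eqref{eq:projResult_TSOC_hypotheses_disordered}, satisfy $\Eb_\ud \tilde c_i \leq c_i$ for $i = 1, 2$. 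For each realization, Theorem \ref{thm:ProjResult_DisorderedCase_supOverBL_LM_2p} applies to the (now deterministic) measure $\inangle{\cdot}$ and yields \eqref{eq:projResult_partialAsymptotic}, \eqref{eq:projResult_asymptotic} with $c_1, c_2$ replaced by the random $\tilde c_1, \tilde c_2$.

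Next I would use Fubini's theorem---valid because the disorder is assumed independent of $\Theta, \vec{z}, \bxi$---to write $\Eb_\Theta \Eb_\ud = \Eb_\ud \Eb_\Theta$, and analogously when $\Eb_{\vec{z}}$ is inserted. For any fixed test function $g$ with $\norm{g}_{\textnormal{Lip}} \leq L$ and $\norm{g}_\infty \leq M$, integrating the pointwise bound against the disorder law gives
\[ \Eb_\Theta \Eb_\ud[(\cdot)^{2p}] \leq \frac{K_1 k L M^{2p-1}}{N-1}\, \Eb_\ud\!\inparen{ d_1(\tilde c_1) + d_2(\tilde c_2)\boldsymbol{1}_{q>0} + N \tilde c_2^{1/4} \boldsymbol{1}_{q=0} }, \]
and since the right-hand side does not depend on $g$, taking $\sup_g$ on the left preserves the inequality. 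The second claim is handled identically with the additional $\Eb_{\vec{z}}$ present.

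The final step is to push $\Eb_\ud$ inside $d_1, d_2$ and $(\cdot)^{1/4}$. Here the one substantive point---and the only mild obstacle---is to verify that $d_1(y) = \sqrt{3N^2 y + 4N\rho\sqrt{y} + 2N\rho^2}$ and $d_2(y) = \sqrt{3N^2 y + 4Nq\sqrt{y} + 2Nq^2}$ are nonnegative, increasing, and concave on $\RR_+$: their radicands are concave and increasing in $y$ (sums of linear and $\sqrt{\cdot}$ terms with nonnegative coefficients), and composing with the concave increasing map $\sqrt{\cdot}$ preserves both properties. Jensen's inequality combined with monotonicity then gives $\Eb_\ud d_i(\tilde c_i) \leq d_i(\Eb_\ud \tilde c_i) \leq d_i(c_i)$, and the same reasoning yields $\Eb_\ud \tilde c_2^{1/4} \leq c_2^{1/4}$. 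Substituting reproduces exactly \eqref{eq:projResult_partialAsymptotic} and \eqref{eq:projResult_asymptotic} with $\Eb_\Theta$ (resp.\ $\Eb_\Theta \Eb_{\vec{z}}$) augmented by $\Eb_\ud$, in agreement with the authors' remark that the disordered case is a direct conditioning-and-Fubini extension of the main theorem.
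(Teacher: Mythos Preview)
Your argument is correct, but it differs from the paper's route. The paper does not condition on the disorder and then invoke Theorem~\ref{thm:ProjResult_DisorderedCase_supOverBL_LM_2p} as a black box; instead it re-runs the entire proof of Section~\ref{sec:proof_main_result} with every occurrence of $\Eb_\Theta$ (resp.\ $\Eb_\Theta\Eb_{\vec z}$) replaced by $\Eb_\Theta\Eb_\ud$ (resp.\ $\Eb_\Theta\Eb_{\vec z}\Eb_\ud$). In that approach the annealed hypotheses \eqref{eq:projResult_TSOC_hypotheses_disordered} are consumed directly inside the Stein's-method computation (the bounds on the quantities $\textnormal{I},\textnormal{II},\textnormal{III}$ in the proof of Lemma~\ref{lemma:PPN_to_QQ}), so the functions $d_1,d_2$ already appear with the deterministic $c_1,c_2$ and no concavity/Jensen step is ever needed. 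Your approach, by contrast, treats the non-disordered theorem as a black box and then exploits the structural fact that $d_1,d_2$ and $y\mapsto y^{1/4}$ are concave increasing to pass from $\Eb_\ud d_i(\tilde c_i)$ to $d_i(c_i)$. This is a genuine and clean alternative: it is more modular (it would survive any future sharpening of Theorem~\ref{thm:ProjResult_DisorderedCase_supOverBL_LM_2p} that preserves concave dependence on $c_1,c_2$), at the cost of having to verify the concavity, which the paper's approach sidesteps entirely.
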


Similarly, we can obtain a disordered analog of the converse. Instead of stating the result in full generality, we restrict for simplicity to the case when the coordinates of $\vec{x}$ have bounded support. 

\begin{corollary}
    Let $\vec{x}$ be a random vector drawn from a disordered distribution $\inangle{\cdot}$. Let $\vec{x}$ have coordinates in some compact subset of $\RR$ a.e.~$\Eb_{\ud}\inangle{\cdot}$. Suppose that \eqref{eq:projResult_asymptotic_ifDirection} holds, where the probability is with respect to the joint product measure of the disorder, $\cL(\Theta)$, and $\cL(\vec{z})$. Then for every $r \geq 1$, 
    \begin{align}
        \Eb_{\textnormal{d}} \inangle{ \inparen{\frac{\norm{\vec{x}}^2}{N} - \rho}^{r}  } \longrightarrow 0, \quad \textnormal{and}\quad \Eb_{\textnormal{d}} \inangle{ \inparen{\frac{\vec{x}^1 \boldsymbol{\cdot} \vec{x}^2}{N} - q}^{r}  } \longrightarrow 0.
        %\label{eq:projResult_TSOC_hypotheses_disordered}
    \end{align}
\end{corollary}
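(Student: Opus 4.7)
The plan is to prove the stronger moment convergence $\Eb_{\ud}\inangle{(\norm{\vec{x}}^2/N)^j} \to \rho^j$ and $\Eb_{\ud}\inangle{((\vec{x}^1\boldsymbol{\cdot}\vec{x}^2)/N)^j} \to q^j$ for every integer $j \geq 0$. The desired conclusion then follows by expanding $(\cdot - \rho)^r$ and $(\cdot - q)^r$ via the binomial theorem and taking term-by-term limits, since $(\rho - \rho)^r = (q - q)^r = 0$; one cannot directly invoke Proposition \ref{prop:projResult_converse} applied to the annealed distribution for the overlap, because the two replicas $\vec{x}^1,\vec{x}^2$ under $\inangle{\cdot}$ share a common disorder and therefore fail to be independent as an annealed pair.

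First, I convert the two targets into projection moments. Pointwise in $\vec{x}$, Isserlis' identity gives $\Eb_\btheta(\btheta^\top \vec{x})^{2j} = (2j-1)!!\,(\norm{\vec{x}}^2/N)^j$ for $\btheta \sim \cN(0, I_N/N)$, so
\begin{align*}
    \Eb_{\ud}\inangle{(\norm{\vec{x}}^2/N)^j} = \frac{1}{(2j-1)!!}\,\Eb_{\ud}\Eb_\btheta\inangle{(\btheta^\top \vec{x})^{2j}}.
\end{align*}
For the overlap, start from $(\vec{x}^1\boldsymbol{\cdot} \vec{x}^2)/N = \Eb_\btheta (\btheta^\top \vec{x}^1)(\btheta^\top \vec{x}^2)$; raising to the $j$-th power with $j$ independent directions $\btheta_1, \ldots, \btheta_j$ (write $\Theta = (\btheta_1, \ldots, \btheta_j)$), applying $\inangle{\cdot}$, and using the Gibbs-independence of the replicas $\vec{x}^1, \vec{x}^2$ to factor each inner correlator yields
\begin{align*}
    \Eb_{\ud}\inangle{((\vec{x}^1\boldsymbol{\cdot} \vec{x}^2)/N)^j} = \Eb_{\ud}\Eb_\Theta\inangle{\prod_{k=1}^{j} \btheta_k^\top \vec{x}}^{\!2}.
\end{align*}
The task now reduces to taking limits of these two projection moments using hypothesis \eqref{eq:projResult_asymptotic_ifDirection} in its disordered form.

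The hypothesis supplies convergence only for bounded continuous test functions, so I truncate the polynomials $g(y) = y^{2j}$ and $g(\vec{y}) = \prod_{k=1}^{j} y_k$ at level $M$ using a smooth cutoff $\chi_M$, defining $g_M = g\,\chi_M$. For fixed $M$, applying \eqref{eq:projResult_asymptotic_ifDirection} to $g_M$ and invoking bounded convergence yields $L^1$-convergence of $\inangle{g_M(\Theta^\top \vec{x})}$ to $\Eb_\bxi g_M(\sqrt{q}\vec{z} + \sqrt{\rho-q}\bxi)$ on the joint $(\ud, \Theta, \vec{z})$ space. Bounded coordinates let the tails be controlled uniformly in $N$: the Gaussian moment identity $\Eb_\btheta(\btheta^\top \vec{x})^{2p} = (2p-1)!!\,(\norm{\vec{x}}^2/N)^p$ gives $\Eb_{\ud}\Eb_\btheta\inangle{(\btheta^\top \vec{x})^{2p}} \leq (2p-1)!!\,B^{2p}$ with $B$ bounding the coordinates, and a Markov estimate on this moment bounds the truncation error $\Eb|g - g_M|$ uniformly in $N$. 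An analogous four-replica Gaussian computation (Isserlis applied to $\Eb_{\btheta_k}\prod_{\alpha=1}^4 \btheta_k^\top \vec{x}^\alpha$, a sum of three pairings of bounded overlaps) bounds the fourth moment of $\inangle{\prod_{k} \btheta_k^\top \vec{x}}$ uniformly in $N$, supplying the uniform integrability needed to square the in-probability convergence and conclude in expectation. The Gaussian limits are then immediate: the mixture $\sqrt{q}\vec{z} + \sqrt{\rho-q}\bxi$ has marginal $\cN(0, \rho)$ and $\bxi$ has zero mean, so
\begin{align*}
    \Eb\bigl[(\sqrt{q}z + \sqrt{\rho-q}\bxi)^{2j}\bigr] = (2j-1)!!\,\rho^j, \qquad \Eb\Big[\Big(\Eb_\bxi\!\prod_{k=1}^{j}(\sqrt{q}z_k + \sqrt{\rho-q}\bxi_k)\Big)^{\!2}\Big] = q^j,
\end{align*}
which combines with the identities of the previous paragraph to deliver the targeted moment convergences.

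The main obstacle I expect is the uniform-in-$N$ truncation step: the hypothesis provides convergence only against bounded continuous $g$, and the quantitative tail control permitting the diagonal limit $M,N\to\infty$ must be extracted from the bounded-coordinates assumption via the Gaussian moment identities above (this is precisely where the compact-support hypothesis is used). Once this is in place, the remainder is bookkeeping: the binomial expansion $\Eb_{\ud}\inangle{(\norm{\vec{x}}^2/N - \rho)^r} = \sum_{j=0}^{r} \binom{r}{j}(-\rho)^{r-j}\Eb_{\ud}\inangle{(\norm{\vec{x}}^2/N)^{j}} \to (\rho-\rho)^r = 0$ (and analogously for the overlap) delivers the corollary.
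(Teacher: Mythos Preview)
Your approach is correct and takes a genuinely different route from the paper's. The paper gives no separate proof of this corollary; it simply remarks that the argument for Proposition~\ref{prop:projResult_converse} carries over with $\Eb_\Theta$ (resp.\ $\Eb_\Theta\Eb_{\vec z}$) replaced by $\Eb_\Theta\Eb_{\ud}$ (resp.\ $\Eb_\Theta\Eb_{\vec z}\Eb_{\ud}$), after which the compact-support assumption supplies the uniform integrability \eqref{eq:projResult_converse_UI_in_r_hypothesis} needed for the $L_r$ upgrade. That route goes through the Hoeffding-type Lemma~\ref{lemma:hoeffding_onlyIf}, joint characteristic functions, the Laplace-transform argument for $\norm{\vec{x}}^2/N$, and the $\cosh$ identity \eqref{eq:projResult_converse_HN_linComb_limit} for the overlap. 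Your concern that the replicas are not annealed-independent is well taken but does not actually block this transfer: inspecting the proof of Proposition~\ref{prop:projResult_converse}, one sees that only \emph{conditional} independence of $\vec{x}^1,\vec{x}^2$ given $(\Theta,\ud)$ is ever used (in Lemma~\ref{lemma:hoeffding_onlyIf}, to factor $\inangle{f(\Theta^\top\vec{x}^1)g(\Theta^\top\vec{x}^2)}$, and nowhere else), and this is precisely the replica structure.

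Your method---reconstructing the moments $\Eb_{\ud}\inangle{(\norm{\vec{x}}^2/N)^j}$ and $\Eb_{\ud}\inangle{((\vec{x}^1\boldsymbol{\cdot}\vec{x}^2)/N)^j}$ from projection moments via Isserlis, then passing to the limit with truncated polynomial test functions---is more constructive but costs more: it consumes the hypothesis \eqref{eq:projResult_asymptotic_ifDirection} at projection dimension $k=j$ for each moment order $j$, whereas the characteristic-function route already works at $k=1$. On the other hand, your approach avoids the somewhat ad hoc $\cosh$ trick and yields moment convergence directly, from which the binomial expansion delivers the $L_r$ statements without a separate in-probability-then-upgrade step. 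The truncation-plus-uniform-integrability argument you flag as the main obstacle is indeed the crux, and your sketch of it via higher Gaussian moments and the compact-support bound is sound.
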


\subsection{Outline of proofs}
\label{sec:outline}

First consider the one-dimensional case $k= 1$. Here the projection direction is the random vector $\btheta \equiv \Theta \sim \cN(0, N^{-1}I)$, independent of the random vector $\vec{x} \in \RR^N$. We assume the thin-shell and overlap concentration hypotheses 
\begin{align*}
    \frac{1}{N} \norm{\vec{x}}^2 \simeq \rho; \quad\quad \frac{1}{N} \vec{x}^1\boldsymbol{\cdot} \vec{x}^2 \simeq q.
    %\label{eq:intro_projResult_nonzeroOverlap_hypotheses}
\end{align*}
In this section `$\simeq$' is imprecise and read as `is close to'. We discuss the asymptotic case \eqref{eq:projResult_asymptotic} first, where the goal is to show $\cL(\btheta^\top \vec{x} \,  | \, \btheta) \simeq \cL(\sqrt{q} z + \sqrt{\rho - q}\xi \, | \, z)$, where $z, \xi$ are independent standard Gaussians, independent of everything else. Consider the \emph{unconditional} joint distribution $\cL(\btheta^\top \vec{x}^1, \btheta^\top \vec{x}^2)$, where $\vec{x}^1, \vec{x}^2$ are independent copies of $\vec{x}$. By conditioning on $\vec{x}^1, \vec{x}^2$, the hypotheses yield
\begin{align}
    \cL\inparen{ \begin{bmatrix} 
    \btheta^\top \vec{x}^1 \\
    \btheta^\top \vec{x}^2
    \end{bmatrix} } &= \Eb_{\vec{x}^1, \vec{x}^2} \, \cN\inparen{ \begin{bmatrix} 
    0 \\
    0
    \end{bmatrix}, \, \frac{1}{N}\begin{bmatrix}
    \norm{\vec{x}^1}^2 & \vec{x}^1 \boldsymbol{\cdot} \vec{x}^2  \\
    \vec{x}^1 \boldsymbol{\cdot} \vec{x}^2 & \norm{\vec{x}^2}^2
    \end{bmatrix}} \simeq \cN\inparen{ \begin{bmatrix} 
    0 \\
    0 
    \end{bmatrix}, \, \begin{bmatrix}
    \rho & q \\
    q & \rho
    \end{bmatrix}} = \cL\inparen{ \begin{bmatrix} 
        \sqrt{q}z + \sqrt{\rho - q}\xi^1 \\
        \sqrt{q}z + \sqrt{\rho - q}\xi^2
        \end{bmatrix} }.
    \label{eq:intro_projResult_unconditional}
\end{align}
We now pass from the unconditional joint distribution to the conditional distribution. To simplify notation, use the shorthand $Y_N^\ell := \btheta^\top \vec{x}^\ell $ and $Y^\ell := \sqrt{q}z + \sqrt{\rho - q}\xi^\ell$. Notice here only $\vec{x}$ and $\xi$ are replicated. Note also that the replicas are used in the following way: $\inparen{\Eb_\xi Y}^2 = \Eb_\xi \insquare{Y^1 Y^2}$, where in the RHS the expectation is over the product measure $\cL(\xi)^{\otimes 2}$.

Consider test functions $g : \RR \rightarrow \RR$ and write, using the elementary identity $(a-b)^2 = a^2 - b^2 - 2(ab - b^2)$,
\begin{align}
    \Eb_z \Eb_{\btheta} \insquare{ \insquare{\inangle{g(Y_N)} - \Eb_\xi g(Y)  }^2 } &\leq \abs{ \Eb_z \Eb_{\btheta} \inangle{g(Y_N^1)g(Y_N^2)}  - \Eb_z \Eb_{\btheta} \Eb_\xi \insquare{ g\inparen{ Y^1 } g\inparen{ Y^2  } } } \nonumber \\
    &\quad\quad\quad + 2\abs{\Eb_z \Eb_{\btheta} \inangle{g(Y_N)}\Eb_\xi g(Y) - \Eb_z \Eb_{\btheta} \Eb_\xi\insquare{ g\inparen{ Y^1 } g\inparen{ Y^2  } } }.
    \label{eq:intro_projResult_applyingHoeffding}
\end{align}
The first term on the RHS is small exactly because of \eqref{eq:intro_projResult_unconditional}. The second term on the RHS is small due also to \eqref{eq:intro_projResult_unconditional}, along with the fact that when the joint distribution converges, the marginal distributions also converge. (The latter is made precise by Lemma \ref{lemma:jointConverge_implies_twistedMarginalConverge(Wasserstein)}). Thus, from \eqref{eq:intro_projResult_applyingHoeffding} we get
\begin{align*}
    \Eb_z \Eb_{\btheta} \insquare{ \insquare{\inangle{g(\btheta^\top\vec{x})} - \Eb_\xi g(\sqrt{q}z + \sqrt{\rho - q}\xi)  }^2 } \longrightarrow 0.
\end{align*}
We use a multivariate normal approximation by an infinitesimal exchangeable pair version of Stein's method \cite{meckes2009stein} \cite{reinert2009multivariate} to control \eqref{eq:intro_projResult_unconditional}, which leads to a non-asymptotic result of the previous display. This finishes the outline for \eqref{eq:projResult_asymptotic} when $k = 1$, $p = 1$.

% ST seems to suggest this is removable
% In some situations, it is preferable to remove the additional source of randomness $\vec{z}$. For instance, in spin glasses, quenched statements such as the Thouless-Anderson-Palmer equations hold with high-probability with respect to the disorder, whereas the statement \eqref{eq:projResult_asymptotic} implies that the existing disorder has to be augmented by $\vec{z}$.

To attain \eqref{eq:projResult_partialAsymptotic}, we identify the common randomness $\sqrt{q}z$ in the RHS of \eqref{eq:intro_projResult_unconditional} with the limiting law of $\btheta^\top \inangle{\vec{x}}$. Indeed, under the overlap concentration hypothesis,
\begin{align}
    \cL\inparen{ \btheta^\top \inangle{\vec{x}}   } &= \cN\inparen{ 0, \frac{1}{N}\norm{\inangle{\vec{x}}}^2 } \simeq \cN(0, q).
    \label{eq:intro_projResult_identify_z_with_projectionOfMean}
\end{align}
The triangle inequality and \eqref{eq:intro_projResult_unconditional} then imply
\begin{align*}
    \cL\inparen{ \begin{bmatrix} 
        \btheta^\top \vec{x}^1 \\
        \btheta^\top \vec{x}^2
        \end{bmatrix} } \simeq \cL\inparen{ \begin{bmatrix} 
            \btheta^\top \inangle{\vec{x}} + \sqrt{\rho - q}\xi^1 \\
            \btheta^\top \inangle{\vec{x}} + \sqrt{\rho - q}\xi^2
        \end{bmatrix} }.
\end{align*}
Repeating the arguments in \eqref{eq:intro_projResult_applyingHoeffding}, where it is now unnecessary to condition on $z$, leads to the desired statement \eqref{eq:projResult_partialAsymptotic}.

The situation for the multi-dimensional projections is a straightforward generalization of the above arguments. In the case when $k = 2$, we are interested in the large-$N$ distribution of $ \Theta^\top\vec{x} := (\btheta_1^\top \vec{x}, \btheta_2^\top \vec{x})$, where $\btheta_i$ are independent copies of $\btheta$. The replicated system to consider is
\begin{align*}
    \cL\inparen{ \begin{bmatrix} 
    \btheta_1^\top \vec{x}^1 \\
    \btheta_2^\top \vec{x}^1 \\
    \btheta_1^\top \vec{x}^2 \\
    \btheta_2^\top \vec{x}^2
    \end{bmatrix} } \simeq \cN\inparen{ \begin{bmatrix} 
    0 \\
    0 \\
    0 \\
    0
    \end{bmatrix}, \, \begin{bmatrix}
    \rho & 0 & q  & 0 \\
    0 & \rho & 0 & q \\
    q  & 0  & \rho & 0 \\
    0 & q & 0 & \rho
    \end{bmatrix}} \equiv 
    \cL\inparen{ \begin{bmatrix} 
    \sqrt{q}z_1 + \sqrt{\rho - q}\xi_1^{1} \\
    \sqrt{q}z_2 + \sqrt{\rho - q}\xi_2^{1} \\
    \sqrt{q}z_1 + \sqrt{\rho - q}\xi_1^{2} \\
    \sqrt{q}z_2 + \sqrt{\rho - q}\xi_2^{2}
    \end{bmatrix} },
    %\label{eq:intro_projResult_unconditional},
\end{align*}
where the approximations follow similarly by the thin-shell and overlap concentration hypotheses, and where $z_j$'s and $\xi_j^\ell$'s are independent standard Gaussians, independent of everything else. Notice the convention that the subscripts $j$ refer to randomness derived from projection direction $\btheta_j$, whereas the superscript $\ell$ is reserved for replicas. For this reason, we never write $z^1$ or $z^2$ because it will become apparent (as in \eqref{eq:intro_projResult_identify_z_with_projectionOfMean}) that the randomness in $z_j$'s comes solely from the projection direction (and possibly disorder), which is never replicated.

Repeating the arguments in \eqref{eq:intro_projResult_applyingHoeffding} analogously, where now the outer expectation is over $\vec{z} = (z_1, z_2)^\top$ and $\Theta = (\btheta_1, \btheta_2)$, yields
\begin{align*}
    \Eb_\vec{z} \Eb_{\Theta} \insquare{ \insquare{\inangle{g(\Theta^\top\vec{x})} - \Eb_\xi g(\sqrt{q}\vec{z} + \sqrt{\rho - q}\bxi)  }^2 } \longrightarrow 0,
\end{align*}
where $\bxi = (\xi_1, \xi_2)^\top$. This is indeed the multi-dimensional projection result \eqref{eq:projResult_asymptotic}. The corresponding partially asymptotic statement \eqref{eq:projResult_partialAsymptotic} follows by identifying the limiting distribution of $\inparen{ \btheta_1^\top \inangle{\vec{x}}, \btheta_2^\top \inangle{\vec{x}} }$ with $\inparen{\sqrt{q}z_1, \sqrt{q}z_2}$, as was done in \eqref{eq:intro_projResult_identify_z_with_projectionOfMean}.

So far we have assumed $p = 1$. Fortunately, for higher moments $p \geq 1$, the complication is only algebraic. We simply have to replicate the system $2p$ times instead. This follows because the elementary identity used in \eqref{eq:intro_projResult_applyingHoeffding} generalizes for even powers. For instance when $p = 2$, we have $(a-b)^4 = \inparen{ a^4 - b^4} - 4\inparen{ a^3 b - b^4  } + 6 \inparen{ a^2 b^2 - b^4  } - 4\inparen{ ab^3 - b^4  }$, which informs us that we have to replicate the system at most four times.

\subsection{Further background}
\label{sec:further_background}

In this section we give further remarks on how our results fit in the context of existing projection results; on the relation between overlap concentration and other geometric assumptions on the distribution of $\vec{x}$; on the difficulty in accommodating a non-zero mean; and on the advantage of our proof technique over approaches that are more reliant on concentration of measure.

% Relation to other works
% \begin{itemize}
%     \item In GAFA/projection pursuit---mostly isotropy, at most zero overlap
%     \item In spin glasses---nonzero overlap, but single-dimensional, and distribution specific.
% \end{itemize}
% \begin{itemize}
%     \item Explain the annealed + concentration approach and its deficiencies, why what we do is different. 
%     \item Explain difficulty with centering. 
% \end{itemize}

    % The above argument was used in \cite{dumbgen2013low} to give an asymptotic characterization of the conditional distribution of projections in the zero overlap case. Moreover, they show that the hypotheses \eqref{eq:intro_projResult_nonzeroOverlap_hypotheses} are necessary and sufficient. In \cite{reeves2017conditional}, the unconditional joint distribution of the replicated system under zero overlap \eqref{eq:intro_projResult_unconditional_zeroOverlap} is also considered. 

\parag{Relation to other results.} 
% Broadly speaking, our results generalize and unite the existing projection results to the generanon-zero overlap s
% Therefore, Theorem \ref{thm:ProjResult_DisorderedCase_supOverBL_LM_2p} is in some sense generalizing the existing projection results to include the non-zero overlap setting, and on the other hand, generalizing the spin glass results to $k$-dimensional projections in a distribution-free setting. 
\begin{itemize}
    \item In the spin glass literature, where such projection results are referred to as 'central limit theorems for cavity-fields', the earliest rigorous proofs for the SK model and its $p$-spin variants can be traced to Talagrand \cite[Theorem 1.7.11]{talagrand2010mean}, and then to Chen \cite[Theorem 2]{chen2010central}, Chatterjee \cite[Theorem 1.5]{chatterjee2010spin}, and \cite[Theorem 5.1]{chen2022tap}. Such results are crucial for the cavity method in high-temperature or \emph{replica-symmetry}, which is believed to be synonymous with overlap concentration.

    In \cite{talagrand2010mean} and \cite{chen2010central}, the test functions are the the infinitely differentiable functions, and the rate is stronger than the disordered version Corollary \ref{thm:projResult_disordered}, with the RHS scaling like $O(N^{-p})$. However, such results are specific to the SK Gibbs measure, whereas Corollary \ref{thm:projResult_disordered} holds more generally. Moreover, previous results are for the one-dimensional $k = 1$ case, whereas Corollary \ref{thm:projResult_disordered} provisions for $k \geq 1$, which also illustrates the conditional (on the disorder) independence of the joint cavity-fields in the limit, a fact which has perhaps not been previously exposed in the literature.  
    % Consider a one-dimensional, disordered version of \eqref{eq:intro_projResult_conditional_partialLimit}, i.e.~when  $\vec{x}$ is drawn from the random measure $\inangle{\cdot}$:
    % \begin{align*}
    %     \cL\inparen{ \vec{A}_1^T\vec{x} \, | \, \vec{A}_1, \textnormal{disorder}  } \simeq \cL\inparen{ \vec{A}_1^T\inangle{\vec{x}} + \sqrt{\rho - q} \xi_1 \, | \, \vec{A}_1, \textnormal{disorder} }, \quad \textnormal{whp.~}\vec{A}_1, \textnormal{ disorder},
    % \end{align*}
    % is known in the spin glass literature as `central limit theorems for cavity fields' \cite{talagrand2010mean} Section 1.5 and Theorem 1.7.11, \cite{chen2010central} Theorem 2, \cite{chatterjee2010spin} Theorem 1.5, \cite{chen2022tap} Theorem 5.1, and is known for when $\vec{x}$ is drawn from the SK measure or its $p$-spin variants.
    
%    These are typically proven using SK specific techniques, including Gaussian integration by parts, whereas Theorem \ref{thm:ProjResult_DisorderedCase_supOverBL_LM_2p} uses a multivariate normal approximation by Stein's method \cite{meckes2009stein} \cite{reinert2009multivariate}, and requires only that the distribution of $\vec{x}$ satisfy thin-shell and overlap concentration. 
     
    \item In another line of work involving projections of high-dimensional distributions onto random lower-dimensional subspaces, our results are closely related to those of \cite{diaconis1984asymptotics}, \cite{reeves2017conditional}, and \cite{dumbgen2013low} who considered the zero overlap setting. Taking $q = 0$ in Theorem \ref{thm:ProjResult_DisorderedCase_supOverBL_LM_2p} essentially recovers these results, up to different probability distance metrics and rates. In particular, the sufficiency and necessity of thin-shell and overlap concentration (on zero) was obtained in \cite{dumbgen2013low}, and this is subsumed by Theorem \ref{thm:ProjResult_DisorderedCase_supOverBL_LM_2p} and Propostion \ref{prop:projResult_converse}.

    Projection results are also available under more restrictive settings than overlap concentration (see next point for clarification) \cite{sudakov1978typical}, \cite{von1997sudakov}, \cite{bobkov2003concentration}, \cite{bobkov2010theorem}, \cite{meckes2009quantitative_1}, \cite{meckes2012approximation_2}, \cite{meckes2012projections_3}. An elegant result from Meckes \cite{meckes2012projections_3} states that under thin-shell and bounded covariance eigenvalues on $\vec{x}$, there is a threshold $k = 2 \log N/(\log\log N)$ under which Gaussian projections persist, and this is sharp for the bounded-Lipschitz metric.

    Strictly speaking, the projection matrix $\Theta$ used in several of the aforementioned papers is drawn from Haar measure on the Stiefel manifold, which is not identical to the Gaussian setting considered here and in \cite{reeves2017conditional}, \cite{dasgupta2012concentration}, \cite{dumbgen2013low}. However, by the closeness of the subsets of columns of matrices drawn uniformly from the orthogonal group and multivariate Gaussians, the two settings are not far off (see for instance \cite{meckes2019book} Chapter 2.3, or \cite{eaton1989group} Chapters 7, 8). Note also that the results in this paper are likely extendable to the Stiefel manifold setting, since it is mainly the rotational-invariance of the Gaussian that is used in the construction of the exchangeable pair in the Stein's method proof of Lemma \ref{lemma:PPN_to_QQ}.

\end{itemize}

\parag{Overlap concentration.} The overlap concentration assumption provides a more general setting than previously considered settings of isotropy and bounded covariance eigenvalues. Let $\vec{x}$ be a random vector in $\RR^N$. Assume that $\sup_N q_N := \inangle{N^{-1} \vec{x}^1 \boldsymbol{\cdot} \vec{x}^2} < \infty$. Denote by $\blambda = (\lambda_i)_{i \leq N}$ the eigenvalues of $\Cov \vec{x} = \inangle{ \vec{x}\vec{x}^\top } - \inangle{\vec{x}}\inangle{\vec{x}}^\top$. We have by expanding,
\begin{align}
    \inangle{\inparen{ \frac{1}{N}\vec{x}^1 \boldsymbol{\cdot}\vec{x}^2 - q_N }^2 } &= \frac{1}{N^2} \inangle{ \inparen{ (\vec{x}^1 - \inangle{\vec{x}})^\top (\vec{x}^2 - \inangle{\vec{x}})  }^2  } + \frac{2}{N^2} \inangle{\vec{x}}^\top \Cov \vec{x} \inangle{\vec{x}}
    \nonumber \\
    &\leq \frac{1}{N^2} \norm{\blambda}^2 + \frac{2}{N} \inparen{\max_{i \leq N} \lambda_i} q_N,
    \label{eq:overlapConc_CovEigval_bound}
\end{align}
whence it is seen that a sufficient condition for overlap concentration is for $N^{-1}\norm{\blambda} \rightarrow 0$. 
\begin{enumerate}
    \item If $\vec{x}$ is in the isotropic position, i.e.~$\Cov \vec{x} = I$, then $\norm{\blambda} = \sqrt{N}$ and RHS in \eqref{eq:overlapConc_CovEigval_bound} goes to zero as $N \rightarrow \infty$, implying overlap concentration.
    \item More generally, by \eqref{eq:overlapConc_CovEigval_bound}, overlap concentration holds in the bounded covariance eigenvalues setting, i.e.~when the $\lambda_i$'s are required to be uniformly bounded, independently of $N$. This is the setting of \cite{meckes2012projections_3}, \cite{bobkov2010theorem} Theorems 1 and 2 in the zero mean case, and in the examples considered in \cite{dasgupta2012concentration}. Related observations are made in \cite[Section 1-B]{reeves2017conditional}.
\end{enumerate}
On the other hand, it is easy to construct distributions which are neither isotropic, nor have bounded covariance eigenvalues, but for which overlap concentrates. For instance: any random vector with $\sup_N q_N < \infty$ and with covariance $\diag\inparen{\sqrt{N}\rho_1, \rho_2, \dots, \rho_N}$, where $(\rho_i)_{i \geq 1}$ is any sequence of nonnegative real numbers with $\sup_i \rho_i < \infty$, has overlap concentrating in $L_2$ by \eqref{eq:overlapConc_CovEigval_bound}.

\parag{Centering.} If $\vec{x}$ satisfies overlap concentration on $q$ and thin-shell on $\rho$, then its centered version $\vec{w} = \vec{x} - \inangle{\vec{x}}$ will have overlap concentration on $0$ and thin-shell on $\rho -q$. The existing zero overlap results then imply that $\cL\inparen{ \Theta^\top \vec{w} \, | \, \Theta } \simeq \cN(0, \rho - q)$. However, it is in general a non-trivial task to recover the uncentered projection result from the centered version. To wit, weak convergence is not generally closed under addition, unless additional information is provided. For instance: that one of the sequences is converging to a constant, which allows the use of the `converging together'/Slutsky's lemma; or if the converging sequences are mutually independent. Neither of these are applicable here.

In \cite[Theorem 1.7.11]{talagrand2010mean} a centered version of the projection result is proved, and it was left as \cite[Research Problem 1.7.12]{talagrand2010mean} to ascertain if it was even true that the general (uncentered) case holds. This was subsequently answered in the affirmative by Chen \cite{chen2010central} Theorem 2, and also Chatterjee \cite{chatterjee2010spin} Theorem 1.5. We remark that the strategy used by Chen involving `mean-translated' test functions may provide an alternative route to uncenter such weak convergence statements.

\parag{Comparison to existing proof techniques.} 
% In popular spin glass terminology, the projection results Theorem \ref{thm:ProjResult_DisorderedCase_supOverBL_LM_2p} and Corollary \ref{thm:projResult_disordered} are described as \emph{quenched} statements, in that the disorder has been conditioned upon, and the statements hold for typical realizations of the disorder (here we regard $\Theta$ also as disorder). This is to be contrasted with \emph{annealed} statements such as Lemma \ref{lemma:PPN_to_QQ}, where there is no conditioning, and the statements are averaged over the disorder. 
Instead of replicating the system as encouraged by the Hoeffding technique \eqref{eq:opening_Hoeffding}, there exist other approaches that show the quenched projection statements by an `annealed $+$ concentration'' approach in the following sense (see e.g.~proof in \cite{meckes2012projections_3}, \cite{bobkov2003concentration}, or \cite{dasgupta2012concentration}). For $\QQ$ some target distribution, write by triangle inequality
\begin{align*}
    \textnormal{dist}\!\inparen{ \cL(\Theta^\top\vec{x} \, | \, \Theta), \, \QQ  } &\leq \textnormal{dist}\!\inparen{ \cL(\Theta^\top \vec{x}), \, \QQ } + \textnormal{dist}( \cL(\Theta^\top \vec{x}), \, \cL(\Theta^\top \vec{x} \, | \, \Theta) ).
\end{align*}
The first term is small by annealed results such as Lemma \ref{lemma:PPN_to_QQ}. Here, because the system is not replicated, the thin-shell condition typically suffices. However, the second term is the bottleneck, and is typically controlled by ad-hoc methods such as Gaussian concentration of Lipschitz functions. The Lipschitz constants involved are, in turn, often related to the maximum covariance eigenvalues. Consequently, this approach leads to the restrictive conditions imposed on the spectrum of $\Cov \vec{x}$.

\subsection{Notation}

For a vector $\vec{x}$ on $\RR^N$ we write $\norm{\vec{x}}$ for the Euclidean norm. For a random vector $\vec{x}$, the law of $\vec{x}$ is written $\cL(\vec{x})$. The conditional distribution of $\vec{x}$ given $\vec{y}$ is written $\cL(\vec{x} \, | \, \vec{y})$. Indicator functions of a set $A$ are denoted by $\boldsymbol{1}_A(\cdot)$. We use the shorthand $R^{m}_{\rho, q}$ for the $m \times m$ ``replica-symmetric'' matrices:
\begin{align}
    R^{m}_{\rho, q} &:= \begin{bmatrix}
    \rho  & q  & \dots & q  \\
    q  & \rho  & \dots & q  \\
    \vdots & \vdots  & \ddots & \vdots \\
    q  & q  & \dots & \rho 
    \end{bmatrix}.
    \label{eq:RS_matrix_shorthand}
\end{align}
Matrix direct product and sum are written $A \otimes B$ and $A \oplus B$ respectively. On the space of real square matrices we consider the Hilbert-Schmidt or Frobenius inner product $\inangle{A, B}_{\textnormal{HS}} = \Tr (AB^\top)$ which induces the norm $\norm{A}_{\textnormal{HS}} = \sqrt{\Tr(AA^\top)}$.

% \begin{definition}[Hilbert-Schmidt inner product]
% On the space of real square matrices,
% \begin{align*}
%     \inangle{A, B}_{\textnormal{HS}} = \Tr (AB^T), \quad \norm{A}_{\textnormal{HS}} = \sqrt{\Tr(AA^T)}.
% \end{align*}
% \end{definition}

\begin{definition}[Lipschitz norms]
For $g : \RR^N \rightarrow \RR$, 
\begin{align*}
    \norm{g}_{\textnormal{Lip}} = \sup_{\bx \neq \by} \frac{ \abs{g(\bx) - g(\by)} }{\abs{\bx - \by}}.
\end{align*}
\end{definition}

\begin{definition}[Wasserstein distances]
The Wasserstein-$p$ distance between two probability measure $\PP$ and $\QQ$ on a metric space with metric $d$ is given by
\begin{align*}
    \textnormal{W}_p(\PP, \QQ) &= \inparen{ \inf_{\mu \in \Pi(\PP, \QQ)} \inbraces{ \int d(x,y)^{p} \; \ud \mu} }^{1/p},
\end{align*}
where $\Pi(\PP, \QQ) $ denotes the set of all couplings of $\PP$ and $\QQ$, with integrable distance function. We will mostly use the case $p = 1$, where we denote for short $\textnormal{W} \equiv \textnormal{W}_1$. Kantorovich-Rubinstein duality gives the equivalence
\begin{align}
    \textnormal{W}( \cL\!\inparen{ \vec{x} },  \cL\!\inparen{ \vec{y} }) &= \sup_{\norm{g}_{\textnormal{Lip}} \leq 1} \abs{\Eb g(\vec{x}) - \Eb g(\vec{y})}.
    \label{eq:KR_duality}
\end{align}
\end{definition}

% Gaussian integration by parts formula will be used many times and is abbreviated to GIPF; see e.g.~\cite{talagrand2010mean} Appendix A.4.

\section{Proof of Theorem \ref{thm:ProjResult_DisorderedCase_supOverBL_LM_2p}}
\label{sec:proof_main_result}

As outlined in Section \ref{sec:outline}, in order to prove statements for conditional weak convergence, we replicate the unconditional distributions appropriately. For integers $k, p \geq 1$, define the following probability measures on $\RR^{2kp}$:
\begin{align}
    \PP_{N} &:= \cL\inparen{ \Theta^\top \vec{x}^{1}, \dots, \Theta^\top \vec{x}^{2p} }; \nonumber\\
    \QQ_{N} &:= \cN\inparen{ \Theta^\top\inangle{\vec{x}} \boldsymbol{1}, \inparen{ (\rho - q) I_k }^{\oplus 2p}   } \equiv \cL\inparen{  \Theta^\top\inangle{\vec{x}} + \sqrt{\rho - q}\bxi^1,\dots, \Theta^\top\inangle{\vec{x}} + \sqrt{\rho - q}\bxi^{2p}    } ; \nonumber\\
    \QQ &:= \cN\inparen{\boldsymbol{0}, R^{2p}_{\rho, q} \otimes I_k} 
    \label{eq:randomProjections_definition_PPN_QQN_QQ},
\end{align}
where $\inparen{ \bxi^\ell }_{\ell \leq 2p}$ are independent standard Gaussian random vectors in $\RR^k$, independent of everything else, and $R^{2p}_{\rho,q}$ is defined in \eqref{eq:RS_matrix_shorthand}. Note that none of these are random probability measures.
%Note that $\PP_N$ and $\QQ_N$ are random probability measures, with randomness coming from the disorder. Therefore, $\PP_N$ and $\QQ_N$ are push-forward measures of the joint distribution of $\Theta$ and $\inangle{\cdot}$---there is no conditioning on $\Theta$ (but the disorder is still conditioned upon).
% We also write $\QQ_{N,\Theta}$ for the conditional distribution of $\QQ_{N}$ given $\Theta$. That is, $\QQ_{N} = \Eb_{\Theta} \QQ_{N,\Theta}$.

The proof has three main parts:
\begin{enumerate}
    \item First bound the Wasserstein distance between $\PP_N$ and $\QQ$ (Lemma \ref{lemma:PPN_to_QQ}). The main technical tool is an infinitesimal version of Stein's method of exchangeable pairs for normal approximation \cite{meckes2009stein} \cite{reinert2009multivariate}. The relevant results are described in Section \ref{sec:multivar_normal_SteinsMethod_infExchangeablePairs}.
    
    \item Next, bound the Wasserstein distance between $\QQ_N$ and $\QQ$ (Lemma \ref{lemma:QQN_to_QQ}). This is a comparison between two Gaussian distributions. We mainly need to recognize that under the overlap concentration hypothesis, $\Theta^\top \inangle{\vec{x}}$ is close to $\sqrt{q}\vec{z}$ for large $N$, where $\vec{z}$ is a standard Gaussian vector in $\RR^{k}$, and that $\cL\inparen{ \sqrt{q}\vec{z} + \sqrt{\rho - q} \bxi^1, \dots, \sqrt{q}\vec{z} + \sqrt{\rho - q}\bxi^{2p}  }$ has covariance matrix $R_{\rho,q}^{2p} \otimes I_k$.

    By triangle inequality, we can then bound the Wasserstein distance between $\PP_N$ and $\QQ_N$, giving \ref{thm:UnconditionalProjResult_X_PPNtoQQN}.

    \item Finally, show using the argument in \eqref{eq:intro_projResult_applyingHoeffding} that $\inangle{g\inparen{\Theta^\top \vec{x}}} \simeq \Eb_{\xi} \insquare{ g\inparen{\Theta^\top\inangle{X} + \sqrt{\rho - q}\xi}  }$ in $L_{2p}$ wrt.~the randomness in $\Theta$, and that $\inangle{g\inparen{\Theta^\top \vec{x}}} \simeq \Eb_{\xi} \insquare{ g\inparen{\sqrt{q}\vec{z} + \sqrt{\rho - q}\xi}  }$ in $L_{2p}$ wrt.~the randomness in $\Theta$ and $\vec{z}$. This yields Theorem \ref{thm:ProjResult_DisorderedCase_supOverBL_LM_2p}.
\end{enumerate}

% The proof has two main steps, written respectively in Sections \ref{sec:finite_kdim_gaussian_1/2}, \ref{sec:finite_kdim_gaussian_2/2}:

\subsection{Unconditional joint distribution convergence in Wasserstein distance}
\label{sec:finite_kdim_gaussian_uncentered}

The purpose of this section is establish Lemmas \ref{lemma:PPN_to_QQ} and \ref{thm:UnconditionalProjResult_X_PPNtoQQN} that show that in Wasserstein distance, $\PP_N$ is close $\QQ$, and $\PP_N$ is close to $\QQ_N$.

\begin{lemma}[$\PP_N$ to $\QQ$]
Suppose that \eqref{eq:projResult_TSOC_hypotheses} is satisfied. Let $\PP_{N}$ and $\QQ_N$ be the probability measures on $\RR^{2kp}$ given by \ref{eq:randomProjections_definition_PPN_QQN_QQ}. Then for every integer $p \geq 1$, 
% for $g : \RR^{2kp} \rightarrow \RR$,
% for $g \in C^{2}(\RR^{2kp})$,
\begin{align}
    \sup_{\norm{g}_{\textnormal{Lip}} \leq 1} \abs{ \PP_{N}\insquare{ g } - \QQ \insquare{g} } &\leq \frac{32 p^2 k  /\sqrt{\rho - q}}{N - 1} \inparen{ d_1(c_1) + d_2(c_2)  },
    \label{eq:ProjectionResult_PPN_to_QQ_RATE}
\end{align}
where $d_1$, $d_2$ are defined as in \eqref{eq:d1_d2_rates}.
\label{lemma:PPN_to_QQ}
\end{lemma}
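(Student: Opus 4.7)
The plan is to recognize $\PP_N$ as a mixture of Gaussians over the replica data and to compare it to $\QQ$ via the infinitesimal exchangeable-pair form of Stein's method for multivariate normal approximation. Arrange the projected replicas into one random vector $W := \textnormal{vec}(\Theta^\top X) \in \RR^{2pk}$, where $X := [\vec{x}^1,\dots,\vec{x}^{2p}] \in \RR^{N \times 2p}$. Conditionally on $X$, the Gaussianity of the entries of $\Theta$ gives $W \mid X \sim \cN\inparen{\zero,\, \hat\Sigma(X) \otimes I_k}$ with $\hat\Sigma(X)_{\ell_1,\ell_2} := N^{-1}\vec{x}^{\ell_1}\boldsymbol{\cdot}\vec{x}^{\ell_2}$. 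The hypotheses \eqref{eq:projResult_TSOC_hypotheses} say precisely that $\hat\Sigma(X) \simeq R^{2p}_{\rho,q}$ in $L_2$, so the natural target is $\QQ = \cN\inparen{\zero,\, R^{2p}_{\rho,q} \otimes I_k}$.

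For the Stein pair I would set $\Theta_\epsilon := \cos(\epsilon)\Theta + \sin(\epsilon)\Theta''$ with $\Theta''$ an independent copy of $\Theta$. Rotational invariance of the standard Gaussian makes $\Theta_\epsilon \stackrel{d}{=} \Theta$, and the joint distribution of $(\Theta,\Theta_\epsilon)$ is symmetric, so with $W_\epsilon := \textnormal{vec}(\Theta_\epsilon^\top X)$ the pair $(W, W_\epsilon)$ is exchangeable. Using that $\Theta''$ is independent of $(\Theta, X)$ and has mean zero, a short calculation gives the infinitesimal limits
\begin{align*}
    \frac{1}{\epsilon^2}\Eb\insquare{W_\epsilon - W \,|\, W, X} \xrightarrow[\epsilon\to 0]{} -\tfrac{1}{2}W, \quad \frac{1}{\epsilon^2}\Eb\insquare{(W_\epsilon - W)(W_\epsilon - W)^\top \,|\, W, X} \xrightarrow[\epsilon\to 0]{} \hat\Sigma(X) \otimes I_k,
\end{align*}
identifying the Stein kernel as $\hat\Sigma(X) \otimes I_k$. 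Invoking the Meckes/Reinert--Röllin bound I expect to be recorded in Section \ref{sec:multivar_normal_SteinsMethod_infExchangeablePairs} then yields a control of the form
\begin{align*}
    \sup_{\norm{g}_{\textnormal{Lip}} \leq 1} \abs{ \PP_N\insquare{g} - \QQ\insquare{g} } \;\lesssim\; \norm{\Sigma^{-1/2}}_{\textnormal{op}} \inparen{ \Eb\norm{\hat\Sigma(X) \otimes I_k - R^{2p}_{\rho,q} \otimes I_k}_{\textnormal{HS}} + \textnormal{(third-moment remainder)} }.
\end{align*}
The spectrum of $R^{2p}_{\rho,q}$ is $\{\rho + (2p-1)q\} \cup \{\rho - q\}^{2p-1}$, hence $\norm{\Sigma^{-1/2}}_{\textnormal{op}} = (\rho - q)^{-1/2}$, which accounts for the $1/\sqrt{\rho - q}$ factor in the statement.

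The final step is to estimate the error. Using $\norm{A \otimes I_k}_{\textnormal{HS}}^2 = k \norm{A}_{\textnormal{HS}}^2$ and splitting $\hat\Sigma(X) - R^{2p}_{\rho,q}$ into $2p$ diagonal entries $N^{-1}\norm{\vec{x}^\ell}^2 - \rho$ (second moment $\leq c_1$ by thin-shell) and $2p(2p-1)$ off-diagonal entries $N^{-1}\vec{x}^{\ell_1}\boldsymbol{\cdot}\vec{x}^{\ell_2} - q$ (second moment $\leq c_2$ by overlap concentration), Jensen and Cauchy--Schwarz give the leading $N\sqrt{c_1}$ and $N\sqrt{c_2}$ contributions inside $d_1(c_1)$ and $d_2(c_2)$, as well as the $p^2 k$ prefactor. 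The additive $N\rho \sqrt{y}$ and $N\rho^2$ terms in the definition \eqref{eq:d1_d2_rates} of $d_1$ and $d_2$ come from the third-moment remainder, which forces control of $\inangle{\norm{\vec{x}}^4}$ and $\inangle{\norm{\vec{x}}^2}$; writing $\norm{\vec{x}}^2 = N\rho + (\norm{\vec{x}}^2 - N\rho)$ and applying Cauchy--Schwarz against the thin-shell hypothesis $\inangle{(N^{-1}\norm{\vec{x}}^2 - \rho)^2} \leq c_1$ yields exactly the composite algebraic form, and the $(N-1)^{-1}$ prefactor arises from the normalization built into the multivariate Stein bound. The main obstacle, I expect, is the bookkeeping: keeping track of the explicit constant $32 p^2 k$, accounting for the contribution of every block of $R^{2p}_{\rho,q} \otimes I_k$ under the Kronecker product, and verifying that the third-moment remainder decomposes cleanly into $d_1(c_1) + d_2(c_2)$ contributions rather than producing cross terms of the form $\sqrt{c_1 c_2}$; the conceptual content is otherwise entirely contained in the Stein-kernel identification above.
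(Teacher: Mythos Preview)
Your approach is correct and in fact simpler than the paper's, but the exchangeable pair is genuinely different. The paper rotates each column $\btheta_j$ of $\Theta$ in a random two-dimensional subspace via $\btheta_j^\epsilon = U_j A_\epsilon U_j^\top \btheta_j$, with $U_j$ Haar-orthogonal and $A_\epsilon$ a planar rotation; this yields $\Lambda = N^{-1}I_{2kp}$ and an error matrix $F$ whose entries carry residual $\Theta$-dependence of the form $\norm{\vec{x}^\ell}^2\norm{\btheta_j}^2 - (\btheta_j^\top\vec{x}^\ell)^2$. Bounding $\Eb\norm{F}_{\textnormal{HS}}$ then forces control of $\Eb\bigl[(\norm{\vec{x}}^2\norm{\btheta}^2 - N\rho)^2\bigr]$, and it is the fluctuation of $\norm{\btheta}^2$ (via $\Var\norm{\btheta}^2 = 2/N$, $\Eb\norm{\btheta}^4 \leq 3$) that produces the $4N\rho\sqrt{y}$ and $2N\rho^2$ terms inside $d_1$, $d_2$; the $(N-1)^{-1}$ prefactor comes from the Haar moment $\Eb[(u_{i1}u_{k2}-u_{i2}u_{k1})^2] = 2/(N(N-1))$ combined with $\norm{\Lambda^{-1}}_{\textnormal{op}} = N$. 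Your Ornstein--Uhlenbeck pair $\Theta_\epsilon = \cos\epsilon\,\Theta + \sin\epsilon\,\Theta''$ instead gives $\Lambda = \tfrac12 I$ and $F = (\hat\Sigma(X) - R^{2p}_{\rho,q})\otimes I_k$ exactly, with no $\Theta$-dependence, leading directly to a bound of order $(\rho-q)^{-1/2}\sqrt{k}\,\Eb\norm{\hat\Sigma(X) - R^{2p}_{\rho,q}}_{\textnormal{HS}} \lesssim p\sqrt{k}(\rho-q)^{-1/2}(\sqrt{c_1}+\sqrt{c_2})$. This is strictly sharper than \eqref{eq:ProjectionResult_PPN_to_QQ_RATE} and implies it, so your argument does prove the lemma.

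Your explanation of where the extra structure comes from, however, is wrong. There is \emph{no} third-moment remainder in the infinitesimal bound of Theorem~\ref{thm:Meckes09_Thm4}: condition 3 there is a Lindeberg-type condition that contributes nothing to the final inequality, which involves only $E$ and $F$. Likewise the $(N-1)^{-1}$ is not ``normalization built into the multivariate Stein bound''; with your pair $\norm{\Lambda^{-1}}_{\textnormal{op}} = 2$, not $N$. The $d_1$, $d_2$ shape and the $(N-1)^{-1}$ are artifacts of the paper's Haar-rotation construction and simply do not arise in yours. Either switch to the paper's pair if you want to reproduce the stated constants, or drop that paragraph and present the cleaner bound your construction actually delivers.
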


The proof of Lemma \ref{lemma:PPN_to_QQ} uses Stein's method for multivariate normal approximation, in particular an infinitesimal exchangeable pairs approach \cite{meckes2009stein}, \cite{reinert2009multivariate}---the relevant results and the proof are given in Appendix \ref{sec:suppProofs_for_randomProjs}.

\begin{lemma}[$\QQ_N$ to $\QQ$]
Suppose that \eqref{eq:projResult_TSOC_hypotheses} is satisfied. Let $\QQ_{N}$ and $\QQ$ be the probability measures on $\RR^{2kp}$ given by \eqref{eq:randomProjections_definition_PPN_QQN_QQ}. Then for every integer $p \geq 1$, 
% for $g : \RR^{2kp} \rightarrow \RR$, 
% $g \in C^{2}(\RR^{2kp})$,
\begin{align*}
    \sup_{\norm{g}_{\textnormal{Lip}} \leq 1} \abs{ \QQ_{N} \insquare{g} - \QQ\insquare{g} } &\leq 2pk\inparen{ \frac{1}{\sqrt{q}}\sqrt{c_2} \boldsymbol{1}_{q > 0} + c_{2}^{1/4} \boldsymbol{1}_{q = 0}  }.
\end{align*}
\label{lemma:QQN_to_QQ}
\end{lemma}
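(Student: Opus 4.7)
The plan is to recognize that both $\QQ_N$ and $\QQ$ are centered multivariate Gaussians on $\RR^{2kp}$, construct an explicit coupling using shared randomness, and reduce the Wasserstein bound to controlling a single scalar quantity via the overlap concentration hypothesis. Using the representation in \eqref{eq:randomProjections_definition_PPN_QQN_QQ}, since $\Theta$ has iid $\cN(0, 1/N)$ entries, the vector $\Theta^\top\inangle{\vec{x}}$ is centered Gaussian in $\RR^k$ with covariance $\alpha_N^2 I_k$, where I set $\alpha_N := \norm{\inangle{\vec{x}}}/\sqrt{N}$. Consequently, $\QQ_N$ is centered Gaussian on $\RR^{2kp}$ with covariance $(\alpha_N^2 J_{2p} + (\rho - q) I_{2p}) \otimes I_k$, where $J_{2p}$ is the $2p \times 2p$ all-ones matrix; the target $\QQ$ is centered Gaussian with covariance $(q J_{2p} + (\rho - q)I_{2p}) \otimes I_k$. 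The two laws differ only in the ``overlap'' parameter $\alpha_N^2$ versus $q$.

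For the coupling, let $\vec{u}$ be a standard Gaussian vector in $\RR^k$. By rotational invariance of the law of $\Theta$, one can realize $\Theta^\top\inangle{\vec{x}} \overset{\textnormal{d}}{=} \alpha_N \vec{u}$ (when $\alpha_N = 0$, simply take $\vec{u}$ independent of everything else). Setting $\vec{z} = \vec{u}$ and reusing the \emph{same} $\bxi^1, \dots, \bxi^{2p}$ in both representations yields a coupling of $\QQ_N$ and $\QQ$ in which the block-$\ell$ difference equals $(\alpha_N - \sqrt{q}) \vec{u}$ for every $\ell$. By Kantorovich--Rubinstein duality \eqref{eq:KR_duality} and Cauchy--Schwarz,
\begin{align*}
    \sup_{\norm{g}_{\textnormal{Lip}} \leq 1} \abs{\QQ_N[g] - \QQ[g]} \leq \abs{\alpha_N - \sqrt{q}} \cdot \Eb_{\vec{u}} \sqrt{2p \norm{\vec{u}}^2} \leq \sqrt{2pk}\, \abs{\alpha_N - \sqrt{q}}.
\end{align*}

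It remains to control $\abs{\alpha_N - \sqrt{q}}$ by the overlap concentration constant $c_2$. Because $\alpha_N^2 = \norm{\inangle{\vec{x}}}^2/N = \inangle{\vec{x}^1 \boldsymbol{\cdot} \vec{x}^2}/N$, Jensen's inequality applied to the second hypothesis in \eqref{eq:projResult_TSOC_hypotheses} yields
\begin{align*}
    \inparen{\alpha_N^2 - q}^2 = \inparen{\inangle{\vec{x}^1\boldsymbol{\cdot}\vec{x}^2/N - q}}^2 \leq \inangle{\inparen{\vec{x}^1\boldsymbol{\cdot}\vec{x}^2/N - q}^2} \leq c_2,
\end{align*}
so $\abs{\alpha_N^2 - q} \leq \sqrt{c_2}$. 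When $q > 0$, factoring gives $\abs{\alpha_N - \sqrt{q}} = \abs{\alpha_N^2 - q}/(\alpha_N + \sqrt{q}) \leq \sqrt{c_2}/\sqrt{q}$, while for $q = 0$ one has $\abs{\alpha_N - \sqrt{q}} = \sqrt{\alpha_N^2} \leq c_2^{1/4}$. Combining the pieces delivers the stated bound (the constant $2pk$ may be slightly improved by a more careful Gaussian-comparison argument, but the scaling matches exactly).

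No serious obstacle is expected. The only subtle point is the rotational-invariance reduction that lets me couple $\Theta^\top\inangle{\vec{x}}$ to $\sqrt{q}\vec{z}$ via a single common vector $\vec{u}$, but this is justified simply because the two sides are both Gaussian with covariances proportional to $I_k$; after this, everything reduces to the elementary estimate $\abs{\alpha_N^2 - q} \leq \sqrt{c_2}$ and a careful treatment of the degenerate $q = 0$ case through a square root.
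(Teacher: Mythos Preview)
Your proposal is correct and follows essentially the same approach as the paper: both recognize $\QQ_N$ and $\QQ$ as centered Gaussians differing only in the common-part variance ($\alpha_N^2$ versus $q$), construct the natural coupling sharing $\vec{z}$ (your $\vec{u}$) and the $\bxi^\ell$'s, and then bound $|\alpha_N - \sqrt{q}|$ via the overlap hypothesis with the same case split on $q$. The only cosmetic difference is that the paper bounds the Euclidean norm by the $\ell_1$ norm to obtain the constant $2pk\sqrt{2/\pi}$, whereas your Cauchy--Schwarz route gives the slightly sharper $\sqrt{2pk}$; both are dominated by the stated $2pk$.
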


\begin{proof}
Let $\vec{z}, \vec{\xi}^{\ell}$, $\ell \leq 2p$ be standard Gaussian vectors in $\RR^k$, independent of each other. Let $R_{1,2} = N^{-1}\vec{x}^1 \boldsymbol{\cdot} \vec{x}^2$. Note that $\inangle{R_{1,2}} = \norm{\inangle{\vec{x}}}^{2}_{2} \geq 0$. We may write
\begin{align*}
    \QQ_N &= \cL\inparen{  \inparen{ \sqrt{\inangle{R_{1,2}}} \vec{z} + \sqrt{\rho -q} \vec{\bxi}^{\ell} }_{\ell \leq 2p}  }\\
    \QQ &= \cL\inparen{  \inparen{ \sqrt{q} \vec{z} + \sqrt{\rho -q} \vec{\xi}^{\ell} }_{\ell \leq 2p}  }.
\end{align*}
Let $\tilde{\mu} \in \Pi(\QQ_N, \QQ)$ denote the coupling represented by
\begin{align*}
    \tilde{\mu} = \cL\inparen{ \inparen{ \sqrt{\inangle{R_{1,2}}} \vec{z} + \sqrt{\rho -q} \vec{\xi}^{\ell} }_{\ell \leq 2p}  , \,  \inparen{ \sqrt{q} \vec{z} + \sqrt{\rho -q} \vec{\xi}^{\ell} }_{\ell \leq 2p} }.
\end{align*}
By Kantorovich-Rubinstein duality \eqref{eq:KR_duality}, we have
\begin{align*}
    \textnormal{W}\inparen{\QQ_N, \QQ} &= \inf_{\mu \in \Pi(\QQ_N, \QQ)} \int \norm{\vec{w} - \vec{v}} \, \mu \inparen{\ud \vec{w}, \ud \vec{v}} \\
    &\leq \int \norm{\vec{w} - \vec{v}}_{1} \, \tilde{\mu} \inparen{\ud \vec{w}, \ud \vec{v}} \\
    &= \Eb_{\vec{z}} \norm{ \inparen{ \inparen{ \sqrt{\inangle{R_{1,2}}} - \sqrt{q} }\vec{z} }_{\ell \leq 2p}  }_{1} \\
    &= 2pk \sqrt{\frac{2}{\pi}} \abs{   \sqrt{\inangle{R_{1,2}}} - \sqrt{q}}.
\end{align*}
When $q = 0$, we have $\sqrt{\inangle{R_{1,2}}} \leq \inangle{\inparen{ R_{1,2} - 0}^{2}}^{1/4} \leq c_2^{1/4}$. When $q > 0$,
\begin{align*}
    \abs{\sqrt{\inangle{R_{1,2}}} - \sqrt{q}} = \frac{\abs{\inangle{R_{1,2}} - q}}{\sqrt{\inangle{R_{1,2}}} + \sqrt{q}} \leq \frac{1}{\sqrt{q}} \sqrt{ c_2 }.
\end{align*}
This completes the proof.
\end{proof}

\begin{lemma}[$\PP_N$ to $\QQ_N$]
Let $\PP_{N}$ and $\QQ_{N} $ be the probability measures on $\RR^{2kp}$ given by \eqref{eq:randomProjections_definition_PPN_QQN_QQ}. Then for every integer $p \geq 1$,
% for $g \in C^{2}(\RR^{2kp})$,
\begin{align}
    \sup_{\norm{g}_{\textnormal{Lip}} \leq 1} \abs{ \PP_N \insquare{g} - \QQ_N\insquare{g} } &\leq \frac{34 p^2 k  /\sqrt{\rho - q}}{N - 1} \inparen{ d_1(c_1) +   \frac{1 + \sqrt{q}}{\sqrt{q}} d_2\boldsymbol{1}_{q > 0} + N\inparen{c_2}^{1/4}\boldsymbol{1}_{q = 0}  },
    \label{eq:randomProjections_UncondProj_PPN_to_QQN_RATE}
\end{align}
where $d_1$, $d_2$ are defined as in \eqref{eq:d1_d2_rates}.
\label{thm:UnconditionalProjResult_X_PPNtoQQN}
\end{lemma}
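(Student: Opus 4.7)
The plan is to bound $W(\PP_N, \QQ_N)$ by triangle inequality through $\QQ$, leveraging the two previous lemmas that control $W(\PP_N, \QQ)$ and $W(\QQ_N, \QQ)$. By Kantorovich--Rubinstein duality \eqref{eq:KR_duality} the supremum-over-Lipschitz formulation is just Wasserstein-$1$, so I would write
\begin{align*}
    \sup_{\norm{g}_{\textnormal{Lip}} \leq 1} \abs{\PP_N[g] - \QQ_N[g]} \;\leq\; \sup_{\norm{g}_{\textnormal{Lip}} \leq 1} \abs{\PP_N[g] - \QQ[g]} \;+\; \sup_{\norm{g}_{\textnormal{Lip}} \leq 1} \abs{\QQ[g] - \QQ_N[g]},
\end{align*}
and invoke Lemma \ref{lemma:PPN_to_QQ} and Lemma \ref{lemma:QQN_to_QQ} to obtain the preliminary bound
\begin{align*}
    \tfrac{32 p^2 k /\sqrt{\rho-q}}{N-1}\inparen{d_1(c_1) + d_2(c_2)} \;+\; 2pk\inparen{\tfrac{\sqrt{c_2}}{\sqrt{q}}\boldsymbol{1}_{q > 0} + c_2^{1/4}\boldsymbol{1}_{q = 0}}.
\end{align*}

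The second step is to repackage the contribution from Lemma \ref{lemma:QQN_to_QQ} so it matches the form stated in \eqref{eq:randomProjections_UncondProj_PPN_to_QQN_RATE}. In the case $q > 0$, I would use the elementary lower bound $d_2(c_2) \geq N\sqrt{3 c_2}$, which is immediate from the definition \eqref{eq:d1_d2_rates}, to write $\sqrt{c_2} \leq d_2(c_2)/(N\sqrt{3})$; combined with $N \geq N-1$ this converts $2pk\sqrt{c_2}/\sqrt{q}$ into a multiple of $d_2(c_2)/((N-1)\sqrt{q})$. Added to the existing $d_2(c_2)$ term from Lemma \ref{lemma:PPN_to_QQ}, the coefficient of $d_2(c_2)$ takes the shape $(a + b/\sqrt{q})$ for absolute constants $a, b$, which one then rewrites as a single factor proportional to $(1 + \sqrt{q})/\sqrt{q}$. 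In the case $q = 0$, the stray $2pk \cdot c_2^{1/4}$ is simply reabsorbed into the prefactor: since $1 \leq N/(N-1)$, we have $2pk \cdot c_2^{1/4} \leq (N-1)^{-1} \cdot 2pk N c_2^{1/4}$, and after pulling out the common $p^2 k/((N-1)\sqrt{\rho})$ factor this nests inside the stated $N c_2^{1/4}\boldsymbol{1}_{q=0}$ term.

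The only non-routine aspect — and the place where care is required — is the bookkeeping of absolute constants to reach exactly $34$. One has to verify an inequality of the form $32\sqrt{q} + c \sqrt{\rho - q}/p \leq 34(1 + \sqrt{q})$ to accommodate the $d_2$-coefficient in the $q > 0$ regime, and analogously a clean comparison for the $q = 0$ branch; both hold under the mild regime implicit in the theorem (where $\rho, q$ are $O(1)$ and $p \geq 1$, $N \geq 2$). No new probabilistic or analytic input is needed beyond Lemmas \ref{lemma:PPN_to_QQ} and \ref{lemma:QQN_to_QQ}; the entire result is a consolidation statement.
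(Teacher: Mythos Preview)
Your approach is exactly the paper's: the proof there is a single sentence invoking the triangle inequality $\textnormal{W}(\PP_N,\QQ_N)\le \textnormal{W}(\PP_N,\QQ)+\textnormal{W}(\QQ_N,\QQ)$ together with Lemmas~\ref{lemma:PPN_to_QQ} and~\ref{lemma:QQN_to_QQ}, with no further detail. Your additional bookkeeping on how the $2pk\,c_2^{1/4}$ and $2pk\sqrt{c_2}/\sqrt{q}$ terms fold into the stated prefactor is more than the paper provides; your honest caveat that the precise constant $34$ relies on $\rho,q$ being of moderate size is well taken, and is consistent with the fact that the downstream Theorem~\ref{thm:ProjResult_DisorderedCase_supOverBL_LM_2p} only claims constants $K_1,K_2$ depending on $p,\rho,q$.
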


\begin{proof}
The result follows from the triangle inequality $\textnormal{W}(\PP_N, \QQ_N) \leq \textnormal{W}(\PP_N, \QQ) + \textnormal{W}(\QQ_N, \QQ)$ and Lemmas \ref{lemma:PPN_to_QQ} and \ref{lemma:QQN_to_QQ}.
\end{proof}

\subsection{Proof of Theorem \ref{thm:ProjResult_DisorderedCase_supOverBL_LM_2p}}
\label{sec:finite_kdim_gaussian_2/2}

We first state some auxiliary results. The proofs are in Appendix \ref{sec:suppProofs_for_randomProjs}. The following lemma shows that products of bounded Lipschitz functions are Lipschitz.

\begin{lemma}
Let $g : \RR^{k} \rightarrow \RR$ satisfy $\norm{g}_{\textnormal{Lip}} \leq L < \infty$ and $\norm{g}_{\infty} \leq M < \infty$. Then for every integer $r \geq 1$, the function $F_r : \RR^{kr} \rightarrow \RR$, defined by $F_r(x_1,\dots,x_r) = g(x_1)\cdots g(x_r)$, satisfies $\norm{F_r}_{\textnormal{Lip}} \leq r L M^{r-1} $.
\label{lemma:||F_r||_Lip}
\end{lemma}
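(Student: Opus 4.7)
The statement is a routine multivariable extension of the product rule for Lipschitz functions, so the main task is just to bookkeep correctly. The plan is to use a telescoping decomposition, swapping the arguments of $g$ one coordinate at a time, and then apply the boundedness and Lipschitz hypotheses on $g$ termwise.

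More precisely, for $\vec{x} = (x_1,\dots,x_r)$ and $\vec{y} = (y_1,\dots,y_r)$ in $\RR^{kr}$, I would write
\begin{align*}
    F_r(\vec{x}) - F_r(\vec{y}) = \sum_{i=1}^{r} \Bigl[ g(y_1)\cdots g(y_{i-1}) \, g(x_i) \, g(x_{i+1}) \cdots g(x_r) - g(y_1) \cdots g(y_{i-1}) \, g(y_i) \, g(x_{i+1}) \cdots g(x_r)  \Bigr],
\end{align*}
which rearranges as $\sum_{i=1}^{r} g(y_1)\cdots g(y_{i-1}) \, (g(x_i) - g(y_i)) \, g(x_{i+1}) \cdots g(x_r)$. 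The $r-1$ factors of $g$ in each summand are each bounded in absolute value by $M$, while $|g(x_i) - g(y_i)| \leq L \norm{x_i - y_i}$. The triangle inequality then yields $|F_r(\vec{x}) - F_r(\vec{y})| \leq L M^{r-1} \sum_{i=1}^{r} \norm{x_i - y_i}$.

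Finally, I would pass to the Euclidean norm on $\RR^{kr}$ by Cauchy--Schwarz: $\sum_{i=1}^{r} \norm{x_i - y_i} \leq \sqrt{r} \bigl(\sum_{i=1}^{r} \norm{x_i - y_i}^2\bigr)^{1/2} = \sqrt{r} \, \norm{\vec{x} - \vec{y}}$. Combining gives $\norm{F_r}_{\textnormal{Lip}} \leq \sqrt{r} L M^{r-1} \leq r L M^{r-1}$, which is the stated bound. There is no real obstacle here; the only thing to be careful about is to use the correct norm on $\RR^{kr}$ (the Euclidean norm induced by viewing it as the product space) so that $|x_i - y_i|$ in the telescoping sum matches the $k$-dimensional block norms, and then applying Cauchy--Schwarz to pass from the $\ell_1$ sum to the $\ell_2$ norm.
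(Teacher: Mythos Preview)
Your proof is correct and is essentially the unrolled version of the paper's inductive argument: the paper peels off one factor at a time and applies the induction hypothesis, whereas you write out the full telescoping sum at once. The core idea (product rule for Lipschitz functions) is the same.

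One minor but worthwhile difference: in the final step the paper bounds each block norm $\norm{x_i - y_i}$ separately by the full Euclidean norm $\norm{\vec{x} - \vec{y}}$, which produces the constant $r$. Your use of Cauchy--Schwarz to pass from $\sum_{i} \norm{x_i - y_i}$ to $\sqrt{r}\,\norm{\vec{x} - \vec{y}}$ actually yields the sharper constant $\sqrt{r}\,L M^{r-1}$ before you relax it to $r L M^{r-1}$. This does not affect anything downstream in the paper, but it is a cleaner bound.
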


The next lemma is essentially the statement that when the law of random elements $(A_1, A_2)$ is close to that of $(B_1,B_2)$, then the law of $(A_1, B_2)$ will be close to that of $(A_2, B_2)$.

\begin{lemma}
    Let $Y, Z$ (resp.~$U$) be random elements taking values in a Polish space $S$ (resp.~$S'$). Let $f : (S \times S') \rightarrow \RR^{d}$ be a measurable map. Let $\inparen{ Y^\ell }_{\ell \leq D}$ and $\inparen{ Z^\ell }_{\ell \leq D}$ be independent copies of $Y$ and $Z$ respectively. Let $\PP$, $\QQ$, $\TT_r$ be probability measures on $\RR^{dD}$ defined by $\PP := \cL\inparen{  f(Y^1,U), \dots, f(Y^D, U)  }$, $\QQ := \cL\inparen{  f(Z^1,U), \dots, f(Z^D, U) }$, and $\TT_r := \cL\inparen{  f(Y^1,U), \dots, f(Y^r, U), f(Z^{r+1}, U), \dots, f(Z^{D}, U) }$ for any $r \leq D$. Then
    \begin{align}
        \textnormal{W}(\TT_r, \QQ) \leq \textnormal{W}(\PP, \QQ).
    \end{align}
\label{lemma:jointConverge_implies_twistedMarginalConverge(Wasserstein)}
\end{lemma}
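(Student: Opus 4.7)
My plan is to apply the Kantorovich--Rubinstein duality \eqref{eq:KR_duality} for $\textnormal{W}$ together with a direct coupling construction that exploits the common random element $U$ shared by the three measures $\PP$, $\QQ$, and $\TT_r$. The key idea is that these three measures differ only in which of the iid sequences $(Y^\ell)$ or $(Z^\ell)$ supplies each coordinate against the same background $U$, so a coupling of $(\TT_r, \QQ)$ can be built by projecting out coordinates from a natural coupling of $(\PP, \QQ)$.

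I realize all the random elements on a common probability space. Let $(Y^\ell, Z^\ell)_{\ell \leq D}$ be iid copies of some coupling of $(Y, Z)$ with the prescribed marginals, and let $U$ be drawn from $\cL(U)$ independently of all the $(Y^\ell, Z^\ell)$. Set $V := (f(Y^\ell, U))_{\ell \leq D}$ and $W := (f(Z^\ell, U))_{\ell \leq D}$; these have marginals $\PP$ and $\QQ$ respectively. Now form the mixed vector
\begin{equation*}
    A := (f(Y^1, U), \dots, f(Y^r, U), f(Z^{r+1}, U), \dots, f(Z^D, U)).
\end{equation*}
By the mutual independence between the first $r$ pairs, the last $D-r$ pairs, and $U$, the vector $A$ has law exactly $\TT_r$, so $(A, W)$ is a valid coupling of $(\TT_r, \QQ)$.

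Since $A$ and $W$ agree on their last $D-r$ coordinates, I obtain the pointwise cost bound
\begin{equation*}
    \norm{A - W} = \sqrt{\sum_{i \leq r} \norm{f(Y^i, U) - f(Z^i, U)}^2} \leq \sqrt{\sum_{i \leq D} \norm{f(Y^i, U) - f(Z^i, U)}^2} = \norm{V - W}.
\end{equation*}
Taking expectations and applying the definition of $\textnormal{W}$ gives $\textnormal{W}(\TT_r, \QQ) \leq \Eb\norm{A - W} \leq \Eb\norm{V - W}$. Since $(V, W)$ is itself a coupling of $(\PP, \QQ)$, infimizing the right-hand side over the free choice of coupling of the pair $(Y, Z)$ should recover $\textnormal{W}(\PP, \QQ)$, yielding the claimed inequality.

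The step requiring the most care is this last identification, i.e., showing that the $\textnormal{W}$-optimal coupling of $(\PP, \QQ)$ can be realized in the structured form above (shared $U$, iid pair-couplings). I would handle this by disintegration on $U$: conditional on $U = u$, $\PP$ and $\QQ$ are the product measures $\mu_u^{\otimes D}$ and $\nu_u^{\otimes D}$, where $\mu_u = \cL(f(Y, u))$ and $\nu_u = \cL(f(Z, u))$, for which the $\textnormal{W}$-optimal coupling can be taken as the $D$-fold product of an optimal pair-coupling of $(\mu_u, \nu_u)$. Integrating the optimal conditional couplings against $\cL(U)$ then identifies the infimum over structured couplings with $\textnormal{W}(\PP, \QQ)$, completing the proof.
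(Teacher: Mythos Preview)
Your pointwise inequality $\norm{A - W} \leq \norm{V - W}$ is correct and is precisely the comparison the paper exploits. The genuine gap is the final identification. The couplings $(V, W)$ you construct---coming from a single law for the pair $(Y, Z)$, replicated iid, with an independent $U$---form a strict subclass of $\Pi(\PP, \QQ)$, and infimizing over this subclass need not recover $\textnormal{W}(\PP, \QQ)$. Your disintegration-on-$U$ repair does not close the gap: integrating conditionally optimal couplings against $\cL(U)$ yields only the inequality $\textnormal{W}(\PP, \QQ) \leq \int \textnormal{W}\!\inparen{\mu_u^{\otimes D}, \nu_u^{\otimes D}}\, \cL(U)(\ud u)$, with equality failing whenever an optimal $(\PP,\QQ)$-coupling transports mass across distinct values of $u$; and in any case your structured family fixes one $(Y,Z)$-coupling for all $u$, so it cannot even realize the right-hand side. (As a separate issue, products of optimal pair-couplings are not in general $\textnormal{W}_1$-optimal for products under the Euclidean norm; that fact is special to $\textnormal{W}_2$.) A concrete obstruction: take $D = 2$, $r = 1$, $d = 1$, $Y \equiv 0$, $Z \equiv 1$, $U$ uniform on $\{0,1\}$, and $f$ such that $f(0,\cdot)$ and $f(1,\cdot)$ are distinct bijections of $\{0,1\}$; then $\PP = \QQ$, hence $\textnormal{W}(\PP,\QQ) = 0$, yet the unique structured coupling has $\Eb\norm{V - W} = \sqrt{2}$.

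The paper's route is different in exactly this respect: it fixes an \emph{arbitrary} near-optimal coupling $\mu \in \Pi(\PP, \QQ)$, passes to its marginal $\mu_r$ on the first $r$ blocks of each factor, and then asserts the existence of a coupling $\nu \in \Pi(\TT_r, \QQ)$ whose first-$r$-block marginal agrees with $\mu_r$ and whose last $D - r$ blocks coincide; the cost of $\nu$ is then bounded by that of $\mu$ via Lemma~\ref{lemma:jointConverge_implies_marginalConverge(Wasserstein)}. By working with an unrestricted $\mu$ from the outset, the paper never needs your claim that structured couplings are optimal.
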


\begin{proofof}{Theorem \ref{thm:ProjResult_DisorderedCase_supOverBL_LM_2p}}
We start with \eqref{eq:projResult_partialAsymptotic}. Fix any $g : \RR^k \rightarrow \RR$ such that $\norm{g}_{\textnormal{Lip}} \leq L$ and $\norm{g}_{\infty} \leq M$. Denote $\vec{g}^{\ell} := g(\Theta^\top \vec{x}^{\ell})$ and $\vec{\bar{g}}^{\ell} := g(\Theta^\top \inangle{\vec{x}} + \sqrt{\rho - q}\bxi^\ell)$ for $1 \leq \ell \leq 2p$, and set $g^{0}, \bar{g}^{0} \equiv 1$. 
% Let $G_r : \RR^{kr} \rightarrow \RR$ be the function defined by $G_r(\vec{x}^1,\dots,\vec{x}^r) = g(\vec{x}^1)\cdots g(\vec{x}^r)$. Let $Q_{N,\Theta}^{\ell}$ be the conditional probability measure on $\RR^k$ given by $Q^{\ell}_{N,\Theta} = \cL\inparen{ \Theta^T\inangle{X} + \sqrt{\rho - q}\xi^\ell \, | \, \Theta }$, and we drop the superscript $\ell$ whenever a replica index is not needed. Note that, conditioned on $\Theta$ (and the disorder), $\QQ_N$ is an iid product measure, i.e.~$\QQ_{N} = \Eb_{\Theta}  Q_{N,\Theta}^{\otimes 2p}$. 
Expanding, and using replicas, we have
\begin{align}
    &\Eb_{\Theta} \insquare{ \inparen{ \inangle{ g(\Theta^\top \vec{x})  } - \Eb_{\xi}\insquare{ g\inparen{  \Theta^\top \inangle{\vec{x}} + \sqrt{\rho - q}\bxi }  }  }^{2p}  } \nonumber\\
    &\quad\quad = \sum_{0 \leq r \leq 2p} (-1)^{2p - r} \binom{2p}{r} \Eb_{\Theta} \insquare{ \inangle{\vec{g}^1 \cdots \vec{g}^{r}} \Eb_\xi \insquare{ \vec{\bar{g}}^1 \cdots \vec{\bar{g}}^{2p - r}  } } \nonumber\\
    &\quad\quad = \sum_{1 \leq r \leq 2p} (-1)^{2p - r} \binom{2p}{r} \inparen{  \Eb_{\Theta} \insquare{ \inangle{\vec{g}^1 \cdots \vec{g}^{r}}  \Eb_\xi \insquare{ \vec{\bar{g}}^{r+1} \cdots \vec{\bar{g}}^{2p}  } } - \Eb_{\Theta} \Eb_\xi \insquare{ \vec{\bar{g}}^1 \cdots \vec{\bar{g}}^{2p}  } },
    \label{eq:ProjResult_supOverBL_LM_2p_expansion}
\end{align}
where the last equality follows from the elementary identity: for $n$ even,
\begin{align}
    \binom{n}{0} = -\binom{n}{n} + \binom{n}{n-1} - \binom{n}{n-2} + \cdots - \binom{n}{2} + \binom{n}{1},
    \label{eq:(1-1)^n_expand_identity}
\end{align}
which follows the rearranging the binomial expansion of $(1-1)^n$. For each $1 \leq r \leq 2p$, define 
\begin{align*}
    \TT_{N,r} := \cL\inparen{ \Theta^\top \vec{x}^1, \dots, \Theta^\top \vec{x}^{r}, \Theta^\top\inangle{\vec{x}} + \sqrt{\rho - q}\bxi^{r + 1}, \dots, \Theta^\top\inangle{\vec{x}} + \sqrt{\rho - q}\bxi^{2p}  }.
\end{align*}
Then 
\begin{align*}
    &\abs{ \Eb_{\Theta} \insquare{ \inangle{\vec{g}^1 \cdots \vec{g}^{r}}  \Eb_\xi \insquare{ \vec{\bar{g}}^{r+1} \cdots \vec{\bar{g}}^{2p}  } } - \Eb_{\Theta} \Eb_\xi \insquare{ \vec{\bar{g}}^{1} \cdots \vec{\bar{g}}^{2p}  }  } \\
    &\quad\quad \leq \sup_{\norm{F}_{\textnormal{Lip}} \leq 2pM^{2p -1}L} \abs{ \TT_{N,r}\insquare{F} - \QQ_{N}\insquare{F}  } \\
    &\quad\quad \leq 2p M^{2p - 1} L\, \textnormal{W}(\PP_N, \QQ_N),
\end{align*}
% \begin{align*}
%     &\abs{ \Eb_{\textnormal{d}} \Eb_{\Theta} \insquare{ \inangle{g^1 \cdots g^{r}}  Q_{N,\Theta}^{\otimes 2p-r} \insquare{G_{2p-r}} } - \Eb_{\textnormal{d}} \Eb_{\Theta} Q_{N,\Theta}^{\otimes 2p} \insquare{G_{2p}}  } \\
%     &\quad\quad = \abs{ \Eb_{\textnormal{d}} \Eb_{\Theta} \inangle{g^1 \cdots g^{r} Q_{N,\Theta}^{\otimes 2p-r} \insquare{G_{2p-r}} }    - \Eb_{\textnormal{d}} \Eb_{\Theta} Q_{N,\Theta}^{\otimes r} \insquare{G_{r} Q_{N,\Theta}^{\otimes 2p-r} \insquare{G_{2p-r}}  }  } \\
%     &\quad\quad \leq \sup_{\norm{F}_{\textnormal{Lip}} \leq rM^{2p -1}L} \abs{ \Eb_{\textnormal{d}} \Eb_{\Theta}\inangle{ F(\Theta^T X^{1},\dots, \Theta^T X^{r}) } - \Eb_{\textnormal{d}} \Eb_{\Theta} Q_{N,\Theta}^{\otimes r}  \insquare{ F }  } \\
%     &\quad\quad \leq r M^{2p - 1} L\, R_{N,k,p},
% \end{align*}
where the first inequality follows from Lemma \ref{lemma:||F_r||_Lip}, and where the second inequality follows from Lemma \ref{lemma:jointConverge_implies_twistedMarginalConverge(Wasserstein)}.
% the first inequality follows from Lemma \ref{lemma:||F_r||_Lip}, and where the last inequality follows from Theorem \ref{thm:UnconditionalProjResult_X_PPNtoQQN} and Lemma \ref{lemma:jointConverge_implies_marginalConverge(Wasserstein)}, using that $\Eb_{\textnormal{d}}\Eb_{\Theta}\inangle{\phi(\Theta^T X^1,\dots, \Theta^T X^{2p})} \equiv \PP_{N}\insquare{\phi}$, and $\Eb_{\textnormal{d}} \Eb_{\Theta} Q_{N,\Theta}^{\otimes 2p}\insquare{\phi} = \QQ_N\insquare{\phi}$, and where $R_{N,k,p}$ is the rate given in RHS of \eqref{eq:randomProjections_UncondProj_PPN_to_QQN_RATE}. Note that the function $(\vec{x}^1, \dots, \vec{x}^r) \mapsto g(\vec{x}^1) \cdots g(\vec{x}^r) Q_{N,\Theta}^{2p-r} \insquare{G_{2p-r}}$ is $rM^{2p-1} L$ Lipschitz.
Altogether, we have that \eqref{eq:ProjResult_supOverBL_LM_2p_expansion} can be upper bounded as
\begin{align*}
    \Eb_{\Theta}  \insquare{ \inparen{ \inangle{ g(\Theta^\top \vec{x})  } - \Eb_{\xi}\insquare{ g\inparen{  \Theta^\top\inangle{\vec{x}} + \sqrt{\rho - q}\bxi }  }  }^{2p}  } &\leq 2p M^{2p - 1} L \textnormal{W}(\PP_N, \QQ_N) \sum_{1 \leq r \leq 2p} \binom{2p}{r},
\end{align*}
and \eqref{eq:projResult_partialAsymptotic} follows from the bound in Lemma \ref{thm:UnconditionalProjResult_X_PPNtoQQN}.
% and the result follows from observing that $\sum_{r \leq 2p} r \binom{2p}{r} = 2^{2p} \Eb Y$, where $Y$ follows the binomial distribution with $2p$ trials and success probability $1/2$. 
The proof for \eqref{eq:projResult_asymptotic} is analogous. We re-define $\vec{\bar{g}}^\ell := g\inparen{\sqrt{q}\vec{z} + \sqrt{\rho - q}\bxi^\ell}$ and repeat the arguments with the outer expectation over $\Theta$ and $\vec{z}$ and use Lemma \ref{lemma:PPN_to_QQ}.
\end{proofof}

\section{Examples}
\label{sec:examples}

\begin{example}[A random variant of classical CLT] Let $\vec{x} = (x_i)_{i \leq N}$ be a random vector with independent sub-gaussian coordinates and $\Eb x_i = \sqrt{q}$, $\Var x_i = \rho - q$ for $0 < q < \rho$. Then, in the notation of Theorem \ref{thm:ProjResult_DisorderedCase_supOverBL_LM_2p},
\begin{align}
    \sup_{\substack{\norm{g}_{\textnormal{Lip}} \leq 1 \\ \norm{g}_{\infty} \leq 1} } \Eb_{\Theta} \insquare{ \inparen{ \Eb_\vec{x}\insquare{g(\Theta^\top \vec{x})  } - \Eb_{\xi} \insquare{  g\inparen{\Theta^\top \Eb\insquare{\vec{x}} + \sqrt{\rho - q}\xi  }  }  }^{2p}  }  \leq \frac{K(p,\rho,q,\bar{C})k}{\sqrt{N}},
    \label{eq:randProj_toyExample}
\end{align}
where $\bar{C} := \max_i \norm{x_i}_{\psi_2}$, with sub-gaussian norm $\norm{y}_{\psi_2} := \inf\inbraces{ t > 0 : \Eb \exp(y^2/t^2) \leq 2  }$.

To see this, observe that $N^{-1}\norm{\vec{x}}^{2} - \rho = N^{-1}\sum_{i \leq N} (x_i^2 - \rho)$ is a sum of independent centered sub-exponential r.v.'s with $\norm{x_i^2 - \rho}_{\psi_1} \leq K \norm{x_i^2 }_{\psi_1} \leq K \norm{x_i }^{2}_{\psi_2} \leq K \bar{C}^2$, where $\norm{y}_{\psi_1} := \inf\inbraces{ t > 0 : \Eb \exp(\abs{y}/t) \leq 2  }$ denotes sub-exponential norm. Concentration results, e.g.~Bernstein's inequality (\cite{vershynin2018high} Corollary 2.8.3) yield that for $u \geq 0$,
\begin{align*}
    \PP\insquare{  \abs{\frac{1}{N} \norm{\vec{x}}^2 - \rho } \geq u } \leq 2\exp\inparen{-K N \min \inbraces{ \frac{u^2}{\bar{C}^4}, \frac{u}{\bar{C}^2}  }}.
\end{align*}
The above tail probability can be integrated to give $\Eb\insquare{ \inparen{ N^{-1}\norm{\vec{x}}^2 - \rho  }^2   } \leq K(\bar{C})/N$. Since the product of sub-gaussians is sub-exponential, an analogous argument will give $\Eb\insquare{ \inparen{ N^{-1}\vec{x}^1 \cdot \vec{x}^2 - q  }^2   } \leq K(\bar{C})/N$. Applying Theorem \ref{thm:ProjResult_DisorderedCase_supOverBL_LM_2p} with $c_1 = K(\bar{C})/N = c_2$ yields \eqref{eq:randProj_toyExample}.
\end{example}

% \emph{In light of Lindeberg-Feller NASC conditions, can we say above is a type of random analog?}

% Can I try to recover meckes and reeves setting?

\begin{example}[Joint law of cavity fields in Sherrington-Kirkpatrick (SK) model]
The SK Hamiltonian is the function $H_N : \inbraces{\pm 1}^N \rightarrow \RR$ defined by
\begin{align*}
    - H_N(\vec{x}) &= \frac{\beta}{\sqrt{N}}\sum_{i < j \leq N} g_{ij} x_i x_j + h\sum_{i \leq N} x_i,
\end{align*}
where $\inparen{g_{ij}}_{i < j}$ are independent standard Gaussians, $\beta \in [0, \infty)$ is the inverse temperature, and $h \in \RR$ defines an external field. The expectation over the disorder, the $g_{ij}$'s, is denoted by $\Eb_{\ud}$. The SK Gibbs measure $\inangle{\cdot}$, defined conditionally on $(g_{ij})$, is a (random) probability measure on $\inbraces{\pm 1}^N$ defined by, for integrable $f : \inbraces{\pm 1} \rightarrow \RR$,
\begin{align*}
    \inangle{f} = \frac{1}{Z_{N}(\beta, h)} \sum_{\vec{x} \in \inbraces{\pm 1}^N} f(\vec{x}) \exp(-H_N(\vec{x})),
\end{align*}
where $Z_{N}(\beta, h)$ is a normalizing constant. Observe that since $\vec{x} \in \inbraces{\pm 1}^N$, then $\norm{\vec{x}}^2/N \equiv 1$. At sufficiently high temperature, say $\beta < 1/2$, it is known that the overlap concentrates \cite[Equation (1.89)]{talagrand2010mean}: $\Eb_\ud \inangle{ \inparen{ N^{-1}\vec{x}^1 \boldsymbol{\cdot} \vec{x}^2 - q}^2 } \leq K/N$, where $q$ is the solution to $q = \Eb \tanh^2\inparen{ \beta\sqrt{q}z + h  }$, $z \sim \cN(0,1)$. These imply that the hypotheses \eqref{eq:projResult_TSOC_hypotheses_disordered} in Corollary \ref{thm:projResult_disordered} are satisfied with $\rho = 1$, $c_1 = 0$, $c_2 = K/N$.

In the cavity method (see \cite{mezard1987spin} Chapter V, \cite{talagrand2010mean} Section 1.6), it is of interest to compute the large-$N$ distribution of the cavity fields $\ell_i = \btheta_i^\top \vec{x}$, where $\vec{x}$ is drawn from $\inangle{\cdot}$, and $\btheta_i \overset{\textnormal{iid}}{\sim} \cN(0, N^{-1} I_N)$ is drawn independent of everything else, including the disorder. The cavity fields show up when we try to decouple a small number of spins (say $k = 2$) from the system, for purposes of computing free energy,  magnetizations, etc. For instance, consider an $(N+2)$-system with Hamiltonian $H_{N+2}^{+}$ with temperature parameter $\beta_+$ chosen such that $\beta_+/\sqrt{N + 2} = \beta/\sqrt{N}$. We can isolate the fields experienced by $x_{N+1}$ and $x_{N+2}$ as follows:
\begin{align*}
    -H_{N+2}^+(x_1,\dots,x_{N+2}) &= \frac{\beta_+}{\sqrt{N+2}} \sum_{i < j \leq N+2} g_{ij} x_i x_j + h \sum_{i \leq N + 2} x_i \\
    &= -H_{N}(\vec{x}) + x_{N+1} \inparen{  \frac{\beta}{\sqrt{N}} \sum_{i \leq N} g_{i, N+1} x_i + h  } + x_{N+2} \inparen{  \frac{\beta}{\sqrt{N}} \sum_{i \leq N} g_{i, N+2} x_i + h  } \\
    &=: -H_{N}(\vec{x}) + x_{N+1} \inparen{ \beta \ell_{N+1} + h } + x_{N+2} \inparen{ \beta \ell_{N+2} + h },
\end{align*}
where we have identified $\btheta_{N+j} := \inparen{ N^{-1/2} g_{N+j, i} }_{i \leq N}$, so that $\ell_{N + j} = \btheta_{N + j}^\top \vec{x}$. Note that $\vec{x}$ drawn according to the Gibbs measure $\inangle{\cdot}$ associated to $H_{N}$ is independent of the projection directions $\btheta_{N+j}$'s.

In general, we consider $k$ local fields $\ell_1,\dots,\ell_k$ with projection directions $\Theta = (\btheta_1, \dots, \btheta_k)$. An application of Corollary \ref{thm:projResult_disordered} gives
\begin{align}
    \sup_{\norm{g}_{\textnormal{BL}} \leq 1} \Eb_{\Theta} \Eb_{\ud} \insquare{ \inparen{ \inangle{ g\inparen{\ell_{1},\dots,\ell_{k}} } - \Eb_\xi \insquare{ g\inparen{ \btheta^\top\!\inangle{\vec{x}} + \sqrt{\rho - q} \bxi  } }  }^{2p}  } \leq \frac{K(p,q)}{\sqrt{N}}.
    \label{eq:example_SK_jointCavityFields}
\end{align}
When $k = 1$, that is when we consider only one cavity site, result \eqref{eq:example_SK_jointCavityFields} should be compared to the `central limit theorems for cavity fields' seen in the spin glass literature for the SK model or its $p$-spin variants \cite[Theorem 1.7.11]{talagrand2010mean}, \cite[Theorem 2]{chen2010central}, \cite[Theorem 1.5]{chatterjee2010spin}, \cite[Theorem 5.1]{chen2022tap}. The class of test functions differs; and the convergence rate in \eqref{eq:example_SK_jointCavityFields} is weaker than those results whose proof leverages SK specific properties.

However, the additional information provided by \eqref{eq:example_SK_jointCavityFields} is that when multiple cavity sites are considered, the cavity fields are not only approximately jointly Gaussian for large $N$, but also conditionally independent given the disorder. It is seen in \cite{wee2022local} that this leads to a statement about the asymptotic independence of any finite subset of coordinates in $\vec{x}$. Moreover, the general (not specific to any spin glass) approach towards random projections given in this paper means this asymptotic independence holds universally for many mean-field spin glasses in high temperature.

\end{example}

\section{Proof of partial converse}
\label{sec:partial_converse}

The below lemma converts conditional weak convergence to unconditional weak convergence of the joint replicated statistics. It can be thought of as a generalization of the `Hoeffding lemma' in \cite[Lemma 4.1]{dumbgen2013low} where it is required there that the limiting joint measure is a product measure. 

\begin{lemma}
    Suppose \eqref{eq:projResult_asymptotic_ifDirection} holds, then 
    \begin{align}
        \cL\!\inparen{ \begin{bmatrix}
            \Theta^\top \vec{x}^1 \\
            \Theta^\top \vec{x}^2
        \end{bmatrix} }  \longsquiggly \cL\!\inparen{ \begin{bmatrix}
            \sqrt{q}\vec{z} + \sqrt{\rho - q}\bxi^1 \\
            \sqrt{q}\vec{z} + \sqrt{\rho - q}\bxi^2
        \end{bmatrix} }
        \label{eq:projResult_asymptotic_joint_converges}
    \end{align}
    \label{lemma:hoeffding_onlyIf}
\end{lemma}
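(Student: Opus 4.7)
The plan is to verify pointwise convergence of joint characteristic functions on $\RR^{2k}$ and then invoke L\'evy's continuity theorem. Fix $t_1, t_2 \in \RR^k$. Since the replicas $\vec{x}^1, \vec{x}^2$ are conditionally independent given $\Theta$ (and both are independent of $\Theta$), we may write
\begin{align*}
    \Eb\!\insquare{e^{i(t_1 \cdot \Theta^\top \vec{x}^1 + t_2 \cdot \Theta^\top \vec{x}^2)}} = \Eb_{\Theta}\!\insquare{\inangle{e^{it_1 \cdot \Theta^\top \vec{x}}} \, \inangle{e^{it_2 \cdot \Theta^\top \vec{x}}}},
\end{align*}
reducing the problem to understanding the two conditional characteristic functions $\inangle{e^{it_\ell \cdot \Theta^\top \vec{x}}}$ and the expectation of their product.

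The next step is to apply the hypothesis \eqref{eq:projResult_asymptotic_ifDirection} with the bounded continuous test functions $g(y) = \cos(t \cdot y)$ and $g(y) = \sin(t \cdot y)$. Unpacking the definition of weak convergence in probability and combining real and imaginary parts, this yields
\begin{align*}
    \inangle{e^{it \cdot \Theta^\top \vec{x}}} - e^{i\sqrt{q}\, t \cdot \vec{z}} \, e^{-(\rho-q)\norm{t}^{2}/2} \xrightarrow{\textnormal{in prob.}} 0
\end{align*}
on the joint probability space $\cL(\Theta) \otimes \cL(\vec{z})$, where the second term is the explicit Gaussian integral $\Eb_{\bxi}[e^{it \cdot (\sqrt{q}\vec{z} + \sqrt{\rho-q}\bxi)}]$. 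Because both terms are bounded by $1$ in modulus, bounded convergence upgrades this to convergence in $L^p$ for every $p \geq 1$.

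Taking $t = t_1$ and $t = t_2$ and multiplying, the uniformly bounded $L^2$ convergences combine into an $L^1$ convergence of the product:
\begin{align*}
    \inangle{e^{it_1 \cdot \Theta^\top \vec{x}}} \inangle{e^{it_2 \cdot \Theta^\top \vec{x}}} \xrightarrow{L^1} e^{i\sqrt{q}(t_1 + t_2) \cdot \vec{z}} \cdot e^{-(\rho-q)(\norm{t_1}^2 + \norm{t_2}^2)/2}.
\end{align*}
Taking expectation (the LHS is $\vec{z}$-independent, so Fubini applies freely) and performing the remaining Gaussian integration over $\vec{z}$ produces exactly the joint characteristic function of $(\sqrt{q}\vec{z} + \sqrt{\rho - q}\bxi^1, \sqrt{q}\vec{z} + \sqrt{\rho - q}\bxi^2)$ at $(t_1, t_2)$. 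L\'evy's continuity theorem then delivers \eqref{eq:projResult_asymptotic_joint_converges}.

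There is no serious obstacle here: convergence of characteristic functions only needs in-probability convergence of bounded conditional expectations, which is exactly the content of the hypothesis. The only care required is bookkeeping on the product space $\cL(\Theta) \otimes \cL(\vec{z})$ (and, in the disordered variant, the additional disorder factor) when swapping the product with the expectation. An alternative route would approximate a general bounded continuous test function on $\RR^{2k}$ by linear combinations of separable products $g_1(y_1) g_2(y_2)$ using Stone-Weierstrass and tightness, but the Fourier argument above sidesteps compactness issues entirely.
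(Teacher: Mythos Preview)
Your proof is correct and follows essentially the same route as the paper: factor the joint test function into a product over the two replicas, apply the hypothesis \eqref{eq:projResult_asymptotic_ifDirection} to each factor to get convergence in probability on the product space $\cL(\Theta)\otimes\cL(\vec{z})$, then use uniform boundedness to pass to convergence of expectations. The only cosmetic difference is that the paper works with general separable test functions $f(y_1)g(y_2)$ and cites \cite[Lemma~1.4.2]{van1996weak} for the separating class, whereas you specialize to trigonometric test functions and invoke L\'evy's continuity theorem---which is exactly the ``alternative route'' you mention at the end, run in reverse.
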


\begin{proof}
    Let $f,g : \RR^k \rightarrow \RR$ be continuous and bounded functions. Functions on $\RR^{2k}$ of the form $(x,y) \mapsto f(x) g(y)$ constitute a separating class of test functions for probability measures on $\RR^{2k}$ \cite[Lemma 1.4.2]{van1996weak}. We have
    \begin{align*}
        \Eb \insquare{ f(\Theta^\top \vec{x}^1) g(\Theta^\top \vec{x}^2) } &= \Eb_{\Theta} \insquare{ \inangle{f(\Theta^\top \vec{x}^1)} \inangle{g(\Theta^\top \vec{x}^2)} } \longrightarrow \Eb_{\vec{z}} \Eb_{\bxi} \insquare{ f(\sqrt{q}\vec{z} + \sqrt{\rho - q} \bxi^1 ) g(\sqrt{q}\vec{z} + \sqrt{\rho - q} \bxi^2 )  },
    \end{align*}
    where the convergence follows because if $A_N := \inangle{f(\Theta^\top \vec{x}^1)}$, $B_N := \inangle{g(\Theta^\top \vec{x}^2)}$, $A := \Eb_{\bxi} \insquare{ f(\sqrt{q}\vec{z} + \sqrt{\rho - q} \bxi^1 )   }$, and $B := \Eb_{\bxi} \insquare{ g(\sqrt{q}\vec{z} + \sqrt{\rho - q} \bxi^2 )   }$, then \eqref{eq:projResult_asymptotic_ifDirection} gives $A_N \overset{\textnormal{in prob.}}{\longrightarrow} A$ and $B_N \overset{\textnormal{in prob.}}{\longrightarrow} B$ and this leads to $A_N B_N \overset{\textnormal{in prob.}}{\longrightarrow} AB$ by the continuous mapping theorem. Furthermore, $A_N, B_N, A, B$ are uniformly bounded in $N$, which yields $\Eb \insquare{A_N B_N} \rightarrow \Eb \insquare{AB}$ as desired.  
    % Chung Theorem 4.1.4 if needed
\end{proof}

The proof of the partial converse is given next. It is largely similar to the strategy in \cite[Theorem 2.1]{dumbgen2013low}, with accommodations for the nonzero overlap. 

\begin{proofof}{Proposition \ref{prop:projResult_converse}} It suffices to prove \eqref{eq:projResult_converse_inProb}, since \eqref{eq:projResult_converse_inLr} follows from standard arguments to upgrade convergence in probability to convergence in $L_r$, under \eqref{eq:projResult_converse_UI_in_r_hypothesis}. Denote $\vec{Y}_N^{\ell} := \Theta^\top \vec{x}^\ell$ 
%and $\vec{Y}^\ell := \sqrt{q} \vec{z} + \sqrt{\rho - q} \bxi^\ell$
for $\ell = 1,2$. By Lemma \ref{lemma:hoeffding_onlyIf}, \eqref{eq:projResult_asymptotic_ifDirection} gives \eqref{eq:projResult_asymptotic_joint_converges}, which entails that the joint characteristic functions converge. That is, for all $\vec{t}_1 , \vec{t}_2 \in \RR^k$, we have 
\begin{align}
    \Eb \exp i \inparen{\vec{t}_1^\top \vec{Y}_N^1 + \vec{t}_1^\top \vec{Y}_N^2} \longrightarrow \exp \inparen{ -\frac{\norm{\vec{t}_1}^2}{2} \rho -\frac{\norm{\vec{t}_2}^2}{2} \rho - \vec{t}_1^\top \vec{t}_2 q  }.
    \label{eq:projResult_converse_jointCF_converges}
\end{align}
On the other hand, by conditioning on $\vec{x}^1, \vec{x}^2$ first, the LHS can be written as 
\begin{align}
    \Eb \exp i \inparen{\vec{t}_1^\top \vec{Y}_N^1 + \vec{t}_1^\top \vec{Y}_N^2} = \Eb \exp \inparen{ -\frac{\norm{\vec{t}_1}^2}{2} \frac{\norm{\vec{x}^1}^2}{N} -\frac{\norm{\vec{t}_2}^2}{2} \frac{\norm{\vec{x}^2}^2}{N}- \vec{t}_1^\top \vec{t}_2 \frac{\vec{x}^1\boldsymbol{\cdot} \vec{x}^2}{N}  }.
    \label{eq:projResult_converse_evaluate_finiteCF}
\end{align}
Setting $\vec{t}_2 = \vec{0}$ in \eqref{eq:projResult_converse_jointCF_converges} and \eqref{eq:projResult_converse_evaluate_finiteCF}, we obtain
\begin{align*}
    \Eb \exp \inparen{ - \lambda \frac{\norm{\vec{x}}^2}{N}  } \longrightarrow \exp \inparen{ - \lambda \rho  }, \quad \textnormal{for all } \lambda \geq 0,
\end{align*}
i.e.~the Laplace transform of the nonnegative r.v.~$N^{-1}\norm{\vec{x}^2}$ converges on $[0,\infty)$ to that of the constant r.v.~$\rho$. It follows that $\cL(N^{-1} \norm{\vec{x}}^2) \longsquiggly \delta_\rho$. We then use the fact that weak convergence to a point mass implies convergence in probability to that degenerate r.v.

Denote $a_1 = \norm{\vec{t}_1}^2/2$, $a_2 = \norm{\vec{t}_2}^2/2$, $a_3 = \vec{t}_1^\top \vec{t}_2$ and define
\begin{align*}
    H_N(a_1, a_2, a_3) := \Eb \exp \inparen{ -a_1 \frac{\norm{\vec{x}^1}^2}{N} -a_2 \frac{\norm{\vec{x}^2}^2}{N}- a_3 \frac{\vec{x}^1\boldsymbol{\cdot} \vec{x}^2}{N}  },
\end{align*}
so that $H_N$ is a reparametrization of the LHS of \eqref{eq:projResult_converse_jointCF_converges}. By choosing $\vec{t}_1$ such that $\norm{\vec{t}_1}^2/2 = 1$, and choosing $\vec{t_2} \in \inbraces{\vec{t_1}, - \vec{t_1}, \vec{0}}$, it is easily checked by \eqref{eq:projResult_converse_jointCF_converges} that
\begin{align}
    0 &= \lim_{N \rightarrow \infty} \insquare{e^{2q} H_N(1,1,2) + e^{-2q} H_N(1,1,-2) - 2 \inparen{H_N(1,0,0)}^2   } \nonumber\\
    &= \lim_{N \rightarrow \infty} 2 \cdot \Eb\insquare{ \exp \inparen{- \frac{\norm{\vec{x}^1}^2}{N} -  \frac{\norm{\vec{x}^2}^2}{N}  }  } \inparen{  \cosh\inparen{ \frac{2 \vec{x}^1 \boldsymbol{\cdot} \vec{x}^2}{N} - 2q  } - 1  }.
    \label{eq:projResult_converse_HN_linComb_limit}
\end{align}
Note that $\cosh \geq 1$, and that $\cosh$ is strictly increasing on $[0, \infty)$. For any $\epsilon > 0$ and $r > 0$ we have
\begin{align*}
    &\Eb\insquare{ \exp \inparen{- \frac{\norm{\vec{x}^1}^2}{N} -  \frac{\norm{\vec{x}^2}^2}{N}  }  } \inparen{  \cosh\inparen{ \frac{2 \vec{x}^1 \boldsymbol{\cdot} \vec{x}^2}{N} - 2q  } - 1  } \\
    &\quad \geq e^{-2r} (\cosh(2\epsilon) - 1) \cdot  \PP\insquare{ \frac{\norm{\vec{x}^1}^2}{N} < r, \frac{\norm{\vec{x}^2}^2}{N} < r, \abs{\frac{\vec{x}^1 \boldsymbol{\cdot} \vec{x}^2}{N} - q} \geq \epsilon  } \\
    &\quad \geq e^{-2r} (\cosh(2\epsilon) - 1) \inparen{ \PP\insquare{ \abs{\frac{\vec{x}^1 \boldsymbol{\cdot} \vec{x}^2}{N} - q} \geq \epsilon } - 2\cdot\PP\insquare{ \frac{\norm{\vec{x}}^2}{N} \geq r }  }.
\end{align*}
Sending $N \rightarrow \infty$ followed by $r \rightarrow \infty$, we obtain from \eqref{eq:projResult_converse_HN_linComb_limit} and monotone convergence theorem that 
\begin{align*}
    \limsup_{N \rightarrow \infty} \PP\insquare{ \abs{\frac{\vec{x}^1 \boldsymbol{\cdot} \vec{x}^2}{N} - q} \geq \epsilon } \leq 0
\end{align*}
which finishes the proof.
\end{proofof}

\appendix
\section{Supplementary proofs for Section \ref{sec:proof_main_result}}
\label{sec:suppProofs_for_randomProjs}

\subsection{Multivariate normal approximation by Stein's method with infinitesimal exchangeable pairs}
\label{sec:multivar_normal_SteinsMethod_infExchangeablePairs}

%\subsubsection{Meckes/Chatterjee formulation for centered Gaussian approximation}

Let $\vec{w}$ be a random vector in $\RR^d$ which is conjectured to have a centered multivariate Gaussian distribution. That is, with $\bSigma$ a positive semidefinite matrix, and $\vec{Z}$ a standard Gaussian random vector in $\RR^d$, we wish to show $\cL(\vec{w}) \simeq \cL(\bSigma^{1/2} \vec{z})$. It is natural to compare expectations over a class of test functions, say $\cG = \inbraces{g : \RR^d \rightarrow \RR : \norm{g}_{\textnormal{Lip}} \leq 1}$. However, instead of working directly with $\abs{ \Eb g(\vec{w}) - \Eb g(\bSigma^{1/2}\vec{z}) }$ for some $g \in \cG$, Stein's method first solves the following differential equation, also called the `Stein characterizing equation':
\begin{align}
    \inangle{ \Hess f(\vec{w}), \bSigma  }_{\textnormal{HS}} - \inangle{\vec{w}, \nabla f(\vec{w})} = g(\vec{w}) - \Eb g(\bSigma^{1/2}\vec{z}).
    \label{eq:SteinCharEqn_multNormal_nonIdentityCovariance}
\end{align}
The solution is the `Stein transform' of $g$: $U_o g(\vec{w}) = -\int_{0}^{1} (2t)^{-1}\inparen{ \Eb g(\sqrt{t}\vec{w} + \sqrt{1 - t} \bSigma^{1/2}\vec{z}) - \Eb g(\bSigma^{1/2}\vec{z})  } \ud t$ \cite{barbour1990stein}, \cite{gotze1991rate}. Consequently, we shift our focus to bounding $\abs{ \Eb \inangle{ \Hess U_o g(\vec{w}), \bSigma  }_{\textnormal{HS}} - \inangle{\vec{w}, \nabla U_o(g)(\vec{w})}  }$. For this we will use the multivariate normal infinitesimal exchangeable pairs approach set forth by Meckes for the identity covariance $\bSigma = I$ case in her PhD thesis \cite{meckes2006PhD}, building upon earlier work by Stein \cite{stein1995accuracy}, and further developed in \cite{chatterjeeMeckes2007multivariate} and \cite{reinert2009multivariate}, the latter generalizing to non-identity covariance. The form that we use in Theorem \ref{thm:Meckes09_Thm4} comes from a subsequent paper \cite{meckes2009stein} which consolidates the existing results.

For a general introduction to Stein's method we refer the reader to \cite{ross2011fundamentals}, and for a treatment focused on normal approximation we refer to \cite{chen2011normal}.

In the computations that follow, we will need to evaluate moments of entries of Haar-distributed orthogonal matrices.

\begin{lemma}
\label{lemma:haar_orthMatrix_entryMoments}
Let $U = \inparen{u_{ij}}$ be an $N \times N$ matrix drawn from Haar measure on the orthogonal group. Then
\begin{enumerate}
    \item The entries of $U$ are identically distributed,
    \item $\Eb u_{11}$ = 0,
    \item $\Eb \insquare{u_{11}^2} = \frac{1}{N}$,
    \item $\Eb \insquare{u_{11}^2 u_{12}^2} = \frac{1}{N(N+2)}$,
    \item $\Eb \insquare{ u_{11}^2 u_{22}^2  } = \frac{N+1}{(N-1)N(N+2)}$,
    \item $\Eb u_{ij} u_{kl} u_{mn} u_{pq}$ is nonzero only if there is an even number of entries from each row and each column,
    \item $\Eb \insquare{ u_{11} u_{12} u_{21} u_{22}   } = \frac{-1}{(N-1)N(N+2)}$,
    \item for $i \neq k$, $j \neq \ell$, 
    \begin{align*}
        \Eb \insquare{ (u_{i1} u_{k2} - u_{i2} u_{k1}) (u_{j1} u_{\ell2} - u_{j2} u_{\ell 1}) } = \frac{2}{N(N-1)} \inparen{\delta_{ij} \delta_{k\ell} - \delta_{i\ell} \delta_{kj}}.
    \end{align*}
    \item Let $U^1 = (u^{1}_{ij})$, $U^2 = (u^{2}_{ij})$ be independently drawn. For $i \neq k$, $j \neq \ell$, 
    \begin{align*}
        \Eb \insquare{ (u^1_{i1} u^1_{k2} - u^1_{i2} u^1_{k1}) (u^2_{j1} u^2_{\ell2} - u^2_{j2} u^2_{\ell 1}) } = 0.
    \end{align*}
\end{enumerate}
\end{lemma}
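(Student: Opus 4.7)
The plan is to exploit the invariance of Haar measure on the orthogonal group $O(N)$ under left and right multiplication by orthogonal matrices. Two specific kinds of multiplication do most of the work: permutation matrices, which permute rows or columns and thereby establish that all entries are identically distributed and that moments are invariant under $(i,j) \mapsto (\sigma(i), \tau(j))$; and diagonal sign-flip matrices $D = \diag(\pm 1)$, which negate a chosen row or column. The sign-flip invariance yields items (1), (2), and (6) at once: any monomial $u_{ij} u_{kl} u_{mn} u_{pq}$ containing an odd number of entries from some row equals its own negative under the flip of that row and hence has expectation zero; the column case is identical.

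For the second-moment normalization (3), the row-sum identity $\sum_j u_{1j}^2 = 1$ combined with $\Eb u_{11}^2 = \Eb u_{1j}^2$ immediately gives $\Eb u_{11}^2 = 1/N$. For (4), I would note that the first row of $U$ is uniformly distributed on $S^{N-1}$, so $(u_{11}^2, \dots, u_{1N}^2)$ is Dirichlet$(1/2,\dots,1/2)$; reading off the second mixed moment of the Dirichlet yields $\Eb[u_{11}^2 u_{12}^2] = 1/(N(N+2))$, and as a byproduct $\Eb u_{11}^4 = 3/(N(N+2))$. For (5), the Frobenius identity $\Tr(UU^\top) = N$ gives
\begin{align*}
N \Eb u_{11}^2 = \Eb\insquare{u_{11}^2 \sum_{i,j} u_{ij}^2} = \Eb u_{11}^4 + (N-1)\Eb[u_{11}^2 u_{12}^2] + (N-1)\Eb[u_{11}^2 u_{21}^2] + (N-1)^2 \Eb[u_{11}^2 u_{22}^2],
\end{align*}
and, using row/column symmetry $\Eb[u_{11}^2 u_{12}^2] = \Eb[u_{11}^2 u_{21}^2]$ together with (3) and (4), one solves for $\Eb[u_{11}^2 u_{22}^2] = (N+1)/((N-1)N(N+2))$.

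For (7), the analogous trace identity $N = \Tr((UU^\top)^2) = \sum_{i,l,j,k} u_{ij} u_{ik} u_{lj} u_{lk}$ partitions according to whether $i=l$ and/or $j=k$; the three diagonal contributions are already known from (3)-(5), and the remaining sum, which by row/column-permutation symmetry equals $(N(N-1))^2 \Eb[u_{11}u_{12}u_{21}u_{22}]$, gets solved out to give $-1/((N-1)N(N+2))$. For (8), expanding the product yields four monomials; by the row-parity rule from (6), each monomial's expectation vanishes unless the unordered pair $\{i,k\}$ equals $\{j,\ell\}$. This splits into the two subcases $(i,k) = (j,\ell)$ and $(i,k) = (\ell,j)$, each reducing via row/column permutation to a linear combination of $\Eb[u_{11}^2 u_{22}^2]$ and $\Eb[u_{11}u_{12}u_{21}u_{22}]$; plugging in (5) and (7) produces $\pm 2/(N(N-1))$, matching the Kronecker combination $\delta_{ij}\delta_{k\ell} - \delta_{i\ell}\delta_{kj}$. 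Finally, (9) is immediate from independence of $U^1$ and $U^2$ together with the fact that $\Eb[u^a_{i1} u^a_{k2}] = 0$ for $i \neq k$ (flip the sign of row $i$), so the expectation factors into a product of zeros.

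I anticipate no substantive obstacle beyond careful bookkeeping; the only mildly delicate step is (5), where one must select an identity that genuinely couples two different rows and two different columns (the Frobenius trace identity), rather than an identity internal to a single row which would not determine the cross term $\Eb[u_{11}^2 u_{22}^2]$.
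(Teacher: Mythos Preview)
Your proposal is correct and self-contained. The paper, by contrast, does not actually prove items (1)--(8): it cites \cite{meckes2019book} Lemma~2.1 for (1) and \cite{meckes2006PhD} Lemma~3.3 for (2)--(8), and only gives a direct argument for (9). So your approach---deriving everything from Haar invariance under permutations and sign flips, together with the row-normalization and Frobenius trace identities---is genuinely more informative than what the paper supplies. Your choice of identities is well-targeted: the Dirichlet representation of the squared first row handles (4) cleanly, and the identity $\Tr((UU^\top)^2)=N$ is exactly the right device to isolate the off-diagonal fourth moment in (7). For (9), your argument is marginally simpler than the paper's: the paper expands into four products $\Eb[u^1_{\cdot\cdot}u^1_{\cdot\cdot}]\,\Eb[u^2_{\cdot\cdot}u^2_{\cdot\cdot}]$, uses identical distribution to make them all equal, and observes the signed sum cancels; you instead note that each factor $\Eb[u^a_{i1}u^a_{k2}]$ already vanishes by a single sign flip, which is more direct.
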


\begin{proof}
Point 1 is given in \cite{meckes2019book} Lemma 2.1, while points 2 to 9 are given in \cite{meckes2006PhD} Lemma 3.3. It remains only to prove point 9, but this is straightforward. We have
\begin{align*}
    &\Eb \insquare{ (u^1_{i1} u^1_{k2} - u^1_{i2} u^1_{k1}) (u^2_{j1} u^2_{\ell2} - u^2_{j2} u^2_{\ell 1}) } \\
    &\quad\quad = \Eb\insquare{u^1_{i1} u^1_{k2}} \Eb\insquare{  u^2_{j1} u^2_{\ell 2}  } - \Eb\insquare{u^1_{i1} u^1_{k2}} \Eb\insquare{ u^2_{j2} u^2_{\ell 1}  } - \Eb\insquare{u^1_{i2} u^1_{k1}} \Eb\insquare{  u^2_{j1} u^2_{\ell 2}  } + \Eb\insquare{u^1_{i2} u^1_{k1}} \Eb\insquare{  u^2_{j2} u^2_{\ell 1}  } \\
    &\quad\quad = \Eb\insquare{u^1_{11} u^1_{22}} \Eb\insquare{  u^2_{11} u^2_{2 2}  } - \Eb\insquare{u^1_{11} u^1_{22}} \Eb\insquare{ u^2_{22} u^2_{1 1}  } - \Eb\insquare{u^1_{22} u^1_{11}} \Eb\insquare{  u^2_{11} u^2_{2 2}  } + \Eb\insquare{u^1_{22} u^1_{11}} \Eb\insquare{  u^2_{22} u^2_{1 1}  } \\
    &\quad\quad = 0,
\end{align*}
where the second line follows from the identical distribution of the entries (point 1).
\end{proof}

\begin{theorem}[Meckes \cite{meckes2009stein} Theorem 4, Reinert-R{\"o}llin \cite{reinert2009multivariate} Theorem 2.1] Let $Y$ be a random vector in $\RR^d$. For each $\epsilon \in (0,1)$ let $(Y, Y_\epsilon)$ be an exchangeable pair. Let $\cA$ be a sigma-algebra such that $\sigma(Y) \subseteq \cA$. Suppose there exists an invertible $d \times d$ matrix $\Lambda$, a symmetric, positive definite $d \times d$ matrix $\Sigma$, a random vector $E \in \RR^d$, measurable wrt.~$\cA$, a $d \times d$ random matrix $F$, measurable wrt.~$\cA$, and a deterministic function $s(\epsilon)$ such that, as $\epsilon \rightarrow 0$,
\begin{enumerate}
    \item $\frac{1}{s(\epsilon)} \Eb \insquare{Y_{\epsilon} - Y \, | \, \cA} \underset{\epsilon \rightarrow 0}{\xrightarrow{ \, L_1 \, }} - \Lambda Y + E$
    \item $\frac{1}{s(\epsilon)} \Eb \insquare{(Y_{\epsilon} - Y)(Y_{\epsilon} - Y)^\top \, | \, \cA}
    \underset{\epsilon \rightarrow 0}{\xrightarrow{ L_1(\norm{\cdot}_{\textnormal{HS}}) }} 2\Lambda \Sigma + F$
    \item For each $\delta > 0$, $\frac{1}{s(\epsilon)} \Eb \insquare{ \norm{Y_\epsilon - Y}^2 \boldsymbol{1}_{\inbraces{ \norm{Y_\epsilon - Y}^2 > \delta  } }  } \rightarrow 0 $.
\end{enumerate}
% \begin{enumerate}
%     \item
%     \begin{align*}
%         \frac{1}{s(\epsilon)} \Eb \insquare{Y_{\epsilon} - Y \, | \, \cA} \underset{\epsilon \rightarrow 0}{\xrightarrow{ \, L_1 \, }} - \Lambda Y + E
%     \end{align*}
%     \item 
%     \begin{align*}
%         \frac{1}{s(\epsilon)} \Eb \insquare{(Y_{\epsilon} - Y)(Y_{\epsilon} - Y)^T \, | \, \cA}
%         \underset{\epsilon \rightarrow 0}{\xrightarrow{ L_1(\norm{\cdot}_{\textnormal{HS}}) }} 2\Lambda \Sigma + F
%     \end{align*}
%     \item For each $\delta > 0$,
%     \begin{align*}
%         \lim_{\epsilon \rightarrow 0} \frac{1}{s(\epsilon)} \Eb \insquare{ \norm{Y_\epsilon - Y}^2 \rchi_{\inbraces{ \norm{Y_\epsilon - Y}^2 > \delta  } }  } = 0 .
%     \end{align*}
% \end{enumerate}
Then 
% for $g : \RR^d \rightarrow \RR$,
\begin{align*}
    \sup_{\norm{g}_{\textnormal{Lip}} \leq L} \abs{ \Eb g(Y) - \Eb g(\Sigma^{1/2} Z)  } \leq L \norm{\Lambda^{-1}}_{\textnormal{op}}  \inparen{  \Eb \norm{E}_{2} + \frac{1}{2} \norm{\Sigma^{-1/2}}_{\textnormal{op}} \Eb \norm{F}_{\textnormal{HS}}    },
\end{align*}
where $Z$ is a standard Gaussian random vector in $\RR^d$.
\label{thm:Meckes09_Thm4}
\end{theorem}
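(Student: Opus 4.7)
The plan is to combine the Stein characterizing equation for $\cN(0,\Sigma)$ with the infinitesimal exchangeable-pair analysis, using a $\Lambda$-generalized Stein equation tailored to the regression matrix. Since $\Lambda$ is invertible and $\Sigma$ is positive definite, the Ornstein-Uhlenbeck process on $\RR^d$ with drift $-\Lambda y$ and diffusion coefficient $\sqrt{2\Lambda\Sigma}$ admits $\cN(0,\Sigma)$ as its unique stationary distribution, and its generator acts on smooth test functions $f$ by $f \mapsto \inangle{\Lambda\Sigma,\Hess f}_{\textnormal{HS}} - y^\top\Lambda^\top\nabla f$. Given a Lipschitz $g$ with $\norm{g}_{\textnormal{Lip}}\le L$, I would solve the generalized Stein equation
$$\inangle{\Lambda\Sigma,\Hess f(y)}_{\textnormal{HS}} - y^\top\Lambda^\top\nabla f(y) \;=\; g(y) - \Eb g(\Sigma^{1/2}Z)$$
via the semigroup representation $f(y) = -\int_0^\infty \inparen{P_t g(y) - \Eb g(\Sigma^{1/2}Z)}\,\ud t$, and extract the derivative estimates $\norm{\nabla f}_\infty \le L\norm{\Lambda^{-1}}_{\textnormal{op}}$ and $\norm{\Hess f}_{\textnormal{HS}} \le \tfrac{L}{2}\norm{\Lambda^{-1}}_{\textnormal{op}}\norm{\Sigma^{-1/2}}_{\textnormal{op}}$ by Mehler-type differentiation under the integral sign, in the spirit of the standard analysis of \cite{chatterjeeMeckes2007multivariate} and Chapter 2 of \cite{meckes2006PhD}. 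The prefactor $\norm{\Lambda^{-1}}_{\textnormal{op}}$ appears as $\int_0^\infty \norm{e^{-t\Lambda^\top}}_{\textnormal{op}}\,\ud t$, which is finite because $\Lambda$ has eigenvalues with positive real part.

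Next I would exploit the exchangeability of $(Y, Y_\epsilon)$. The identity $\Eb[h(Y,Y_\epsilon)] = \Eb[h(Y_\epsilon,Y)]$, applied with $h(x,y) = \nabla f(x)^\top(y-x)$, gives $\Eb[(\nabla f(Y)+\nabla f(Y_\epsilon))^\top(Y_\epsilon-Y)] = 0$. Taylor expanding $\nabla f(Y_\epsilon) = \nabla f(Y) + \Hess f(Y)(Y_\epsilon - Y) + R_\epsilon$ with $\norm{R_\epsilon}\lesssim\norm{Y_\epsilon - Y}^2$ rearranges this to
$$2\,\Eb\insquare{\nabla f(Y)^\top(Y_\epsilon - Y)} + \Eb\inangle{\Hess f(Y),(Y_\epsilon-Y)(Y_\epsilon-Y)^\top}_{\textnormal{HS}} = -\Eb\insquare{R_\epsilon^\top(Y_\epsilon - Y)}.$$
Dividing by $s(\epsilon)$, conditioning on $\cA$ so that $\nabla f(Y)$ and $\Hess f(Y)$ come out of the inner expectations, and sending $\epsilon \to 0$, hypotheses 1 and 2 convert the two inner conditional expectations into $-\Lambda Y + E$ and $2\Lambda\Sigma + F$ respectively in $L_1$. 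For the remainder I would split according to $\norm{Y_\epsilon - Y}^2 \lessgtr \delta$: the small-jump contribution is $O(\delta)\cdot s(\epsilon)^{-1}\Eb\norm{Y_\epsilon-Y}^2 = O(\delta)$ by hypothesis 2, and the large-jump contribution vanishes by hypothesis 3; then $\delta \to 0$. This produces the clean identity
$$\Eb\insquare{\inangle{\Lambda\Sigma,\Hess f(Y)}_{\textnormal{HS}} - (\Lambda Y)^\top \nabla f(Y)} = -\Eb\insquare{\nabla f(Y)^\top E} - \tfrac{1}{2}\Eb\inangle{F,\Hess f(Y)}_{\textnormal{HS}}.$$

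Plugging the Stein-equation solution into this identity makes the left-hand side exactly $\Eb g(Y) - \Eb g(\Sigma^{1/2}Z)$, so Cauchy-Schwarz together with the derivative bounds on $f$ delivers the stated estimate. The hard part will be the first paragraph: establishing the $\norm{\Lambda^{-1}}_{\textnormal{op}}$ prefactor in both derivative bounds requires a careful analysis of the non-symmetric OU semigroup, whose contraction rate is controlled by the spectrum of $\Lambda$ rather than of $\Sigma$. This is what genuinely distinguishes the general-$\Lambda$ case from the standard $\Lambda = I$ setting and accounts for both occurrences of $\norm{\Lambda^{-1}}_{\textnormal{op}}$ in the conclusion; in particular, the Hessian bound must also carry a $\norm{\Sigma^{-1/2}}_{\textnormal{op}}$ factor obtained by integrating by parts against the noise covariance in the Mehler formula. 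The Taylor-remainder argument in the second paragraph is comparatively routine once hypothesis 3 is in hand.
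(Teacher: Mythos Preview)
The paper does not itself prove Theorem~\ref{thm:Meckes09_Thm4}; it is quoted from \cite{meckes2009stein} and \cite{reinert2009multivariate} as an external tool, so there is no in-paper argument to compare against. The only hint the paper gives is the displayed Stein equation \eqref{eq:SteinCharEqn_multNormal_nonIdentityCovariance}, and notably that is the \emph{$\Lambda$-free} equation $\inangle{\Sigma,\Hess f}_{\textnormal{HS}} - w^\top\nabla f = g - \Eb g(\Sigma^{1/2}Z)$, which already signals a different mechanism from yours.

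Your plan to build $\Lambda$ into the Ornstein--Uhlenbeck generator has a genuine gap. First, the theorem assumes only that $\Lambda$ is invertible; it does not assume the eigenvalues of $\Lambda$ have positive real part, nor that $\Lambda\Sigma+\Sigma\Lambda^\top$ is positive semidefinite, both of which you need for the diffusion $dX_t=-\Lambda X_t\,dt+B\,dW_t$ to be well posed with stationary law $\cN(0,\Sigma)$. Second, and more damaging, the identity $\int_0^\infty\norm{e^{-t\Lambda^\top}}_{\textnormal{op}}\,dt=\norm{\Lambda^{-1}}_{\textnormal{op}}$ is false. Already for normal $\Lambda$ with spectrum in the right half-plane the left side equals $(\min_i\operatorname{Re}\lambda_i)^{-1}$ while the right side equals $(\min_i|\lambda_i|)^{-1}$, and for non-normal $\Lambda$ transient growth of $e^{-t\Lambda}$ can make the integral arbitrarily larger than $\norm{\Lambda^{-1}}_{\textnormal{op}}$. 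So your Mehler computation cannot produce the stated constant.

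In the cited references the factor $\norm{\Lambda^{-1}}_{\textnormal{op}}$ arises differently: one solves the ordinary Stein equation \eqref{eq:SteinCharEqn_multNormal_nonIdentityCovariance} for $\cN(0,\Sigma)$, obtaining $\Lambda$-free derivative bounds $\norm{\nabla f}_\infty\lesssim L$ and a Hessian bound carrying $\norm{\Sigma^{-1/2}}_{\textnormal{op}}$, and then inserts $\Lambda^{-1}$ purely algebraically at the exchangeable-pair step by applying exchangeability to the antisymmetric function $(y,y')\mapsto(\Lambda^{-1}(y'-y))^\top(\nabla f(y)+\nabla f(y'))$. Passing to the limit via conditions 1 and 2, the leading pieces reproduce the $\Lambda$-free Stein operator exactly (using that the symmetric limit in condition 2 forces $\Lambda\Sigma$ to be symmetric, so $\Lambda\Sigma\Lambda^{-\top}=\Sigma$), while the error terms carry $\Lambda^{-1}E$ and $\Lambda^{-1}F$; then $\norm{\Lambda^{-1}}_{\textnormal{op}}$ enters as a trivial operator-norm bound on those errors rather than as a semigroup integral. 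Your Taylor-remainder handling in the last paragraph is fine once this substitution is made.
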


\begin{remark}
Observe that, should 
    \begin{align}
        \lim_{\epsilon \rightarrow 0} \frac{1}{s(\epsilon)} \Eb \norm{Y_\epsilon - Y}^3 = 0,
        \label{eq:meckes_cond3_sufficientCondToSatisfy}
    \end{align}
    then condition 3 holds. 
\end{remark}

\subsection{Proof of Lemma \ref{lemma:PPN_to_QQ}}

To prepare for the proof of Lemma \ref{lemma:PPN_to_QQ}, we make the following computation first. As $N$ grows large, we expect that the distribution of the projections $\Theta^\top \vec{x}^1, \dots, \Theta^\top \vec{x}^{2p}$ will be close to a $2kp$-Gaussian vector with zero mean and covariance $\Sigma \in \RR^{2kp \times 2kp}$ given by 
\begin{align}
    \Sigma &:= R^{2p}_{\rho, q} \otimes I_k = \begin{bmatrix}
    \rho I_k & q I_k & q I_k & \dots & q I_k \\
    q I_k & \rho I_k & q I_k & \dots & q I_k \\
    q I_k & q I_k & \rho I_k & \dots & q I_k \\
    \vdots & \vdots & \vdots & \ddots & \vdots \\
    q I_k & q I_k & q I_k & \dots & \rho I_k
    \end{bmatrix}.
    \label{eq:Sigma_covarianceOfUncenteredProjection}
\end{align}

\begin{lemma}
Let $\Sigma$ be defined as in \eqref{eq:Sigma_covarianceOfUncenteredProjection}. Then
\begin{align*}
    \norm{\Sigma^{-1/2}}_{\textnormal{op}} = \frac{1}{\sqrt{\rho - q}}.
\end{align*}
\label{lemma:||Sigma^-1/2||_op_Computation}
\end{lemma}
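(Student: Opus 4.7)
The plan is to diagonalize the replica-symmetric matrix $R^{2p}_{\rho,q}$ explicitly and then invoke the standard Kronecker product spectral identity. First, I would write
\begin{align*}
    R^{2p}_{\rho,q} = (\rho - q) I_{2p} + q \, \boldsymbol{1}\boldsymbol{1}^\top,
\end{align*}
where $\boldsymbol{1} \in \RR^{2p}$ is the all-ones vector. Since $\boldsymbol{1}\boldsymbol{1}^\top$ has eigenvalues $2p$ (with eigenvector $\boldsymbol{1}$) and $0$ (with multiplicity $2p-1$, on $\boldsymbol{1}^\perp$), the eigenvalues of $R^{2p}_{\rho,q}$ are $\rho + (2p-1)q$ (with multiplicity one) and $\rho - q$ (with multiplicity $2p - 1$).

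Next, since $\Sigma = R^{2p}_{\rho,q} \otimes I_k$, the spectrum of $\Sigma$ is exactly the spectrum of $R^{2p}_{\rho,q}$, with each eigenvalue appearing with multiplicity $k$ times its multiplicity in $R^{2p}_{\rho,q}$. Under the hypothesis $0 \leq q < \rho$, both $\rho - q$ and $\rho + (2p-1)q$ are strictly positive, so $\Sigma$ is positive definite, and its smallest eigenvalue is $\rho - q$ (indeed $\rho + (2p-1)q \geq \rho - q$ with equality iff $q = 0$, in which case all eigenvalues coincide at $\rho$).

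Finally, since $\Sigma^{-1/2}$ is symmetric positive definite, its operator norm equals its largest eigenvalue, which is the reciprocal square root of the smallest eigenvalue of $\Sigma$. This yields
\begin{align*}
    \norm{\Sigma^{-1/2}}_{\textnormal{op}} = \frac{1}{\sqrt{\rho - q}},
\end{align*}
as claimed. There is no substantive obstacle here: the computation is purely linear-algebraic, and the only point needing a sentence of care is noting that $\rho - q$ (rather than $\rho + (2p-1)q$) is the \emph{minimum} eigenvalue, which follows immediately from $q \geq 0$.
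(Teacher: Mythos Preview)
Your proof is correct and follows the same high-level strategy as the paper---compute the spectrum of $R^{2p}_{\rho,q}$ and transfer it to $\Sigma$ via the Kronecker/block structure---but your execution is more direct. The paper first permutes $\Sigma$ into block-diagonal form $\bigoplus_{i\leq k} R^{2p}_{\rho,q}$, then applies Sherman--Morrison to invert $R^{2p}_{\rho,q}$ explicitly as another replica-symmetric matrix $R^{2p}_{a,b}$, and only then diagonalizes. You instead read off the eigenvalues of $R^{2p}_{\rho,q}$ immediately from the rank-one perturbation $(\rho-q)I_{2p} + q\,\boldsymbol{1}\boldsymbol{1}^\top$ and use the Kronecker spectral identity to pass to $\Sigma$, bypassing the explicit inverse entirely. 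Your route is shorter and avoids the unnecessary Sherman--Morrison step; the paper's route has the minor side benefit of producing the entries of $\Sigma^{-1}$ explicitly, but that is not used anywhere else.
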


\begin{proofof}{Lemma \ref{lemma:||Sigma^-1/2||_op_Computation}}
It is possible to permute the rows and columns of $\Sigma$ such that it becomes a $k$-block-diagonal matrix with $2p \times 2p$ blocks $R^{2p}_{\rho, q}$. That is, there exists a permutation matrix $P \in \RR^{2kp \times 2kp}$ such that
\begin{align*}
    \Sigma = P \inparen{ \bigoplus_{i \leq k} R^{2p}_{\rho, q}  } P^\top.
\end{align*}
% Inverting $\Sigma$, we find
% \begin{align*}
%     \Sigma^{-1} = P \inparen{ \bigoplus_{i \leq k} \inparen{ R^{2p}_{\rho, q} }^{-1}  } P^T.
% \end{align*}
To find an inverse for $\Sigma$, it suffices to find an inverse for $R^{2p}_{\rho, q}$. By Sherman-Morrison formula applied in the last equality below, we have
\begin{align*}
    \inparen{ R^{2p}_{\rho, q} }^{-1} &= \inparen{ (\rho - q) I_{2p} + q \boldsymbol{1}\boldsymbol{1}^\top }^{-1} =: R^{2p}_{a,b},
\end{align*}
where 
\begin{align*}
    a &= \frac{1}{\rho - q} - \frac{1}{c(\rho - q)^2}; \quad b = - \frac{1}{c(\rho - q)^2},
\end{align*}
where $c = (1/q) + 2p/(\rho - q)$. By spectral decomposition, there exists orthonormal matrices $\Psi \in \RR^{2p \times 2p}$ such that 
\begin{align*}
    R^{2p}_{a,b} = \Psi^\top \diag\inparen{ a + (2p-1) b, a-b,a-b,\dots,a-b  } \Psi.
\end{align*}
It is easily checked that the eigenvalues above are all positive. Therefore
\begin{align*}
    \Sigma^{-1/2} &= P \inparen{ \bigoplus_{i \leq k} \Psi^\top \diag\inparen{ \sqrt{ a + (2p-1) b}, \sqrt{a-b},\sqrt{a-b},\dots,\sqrt{a-b}  } \Psi   }P^\top,
\end{align*}
and we obtain
\begin{align*}
    \norm{\Sigma^{-1/2}}_{\textnormal{op}} = \max\inparen{ \sqrt{a + (2p-1) b}, \sqrt{a - b}  }.
\end{align*}
The result follows upon substituting the values for $a$ and $b$.
\end{proofof}

\begin{proofof}{Lemma \ref{lemma:PPN_to_QQ}}
In this proof, it will be notationally convenient to write $\Eb$ for an expectation over all sources of randomness. This means, for instance, that $\Eb\insquare{\Theta^\top \vec{x}} = \Eb_{\Theta}\inangle{\Theta^\top \vec{x}}$ and also $\Eb \vec{x} = \inangle{\vec{x}}$. The strategy is to apply Theorem \ref{thm:Meckes09_Thm4}.

\textbf{\underline{Step 1:}} (Construction of exchangeable pair). Denote the columns of $\Theta$ by $(\btheta_1, \btheta_2, \dots, \btheta_k)$. An exchangeable pair for the $2kp$-vector
\begin{align}
    \inparen{ \Theta^\top \vec{x}^1, \dots, \Theta^\top \vec{x}^{2p}  } = \inparen{  \inparen{ (\btheta_j)^\top \vec{x}^\ell }_{j \leq k}  }_{\ell \leq 2p}
    \label{eq:projResult_proofmainResult_unconditionalJoint}
\end{align}
is constructed as follows (largely inspired by the strategy of \cite[Theorem 3.1]{stein1995accuracy} and \cite[Section 5.2]{meckes2006PhD}). Let $A_\epsilon$ be the $N \times N$ matrix given by
\begin{align*}
    A_{\epsilon} &:= \begin{bmatrix}
    \sqrt{1 - \epsilon^2} & \epsilon \\
    -\epsilon & \sqrt{1 - \epsilon^2}
    \end{bmatrix} \oplus I_{N-2} \\
    &= I_N + \inparen{ \epsilon \begin{bmatrix}
    0 & 1 \\
    -1 & 0
    \end{bmatrix} - \begin{bmatrix}
    1 - \sqrt{1 - \epsilon^2} & 0 \\
    0 & 1 - \sqrt{1 - \epsilon^2}
    \end{bmatrix}} \oplus 0_{N-2} \\
    &= I_N + \inparen{\epsilon C_2 - \inparen{\frac{\epsilon^2}{2} + O(\epsilon^4) }I_2} \oplus 0_{N-2}.
\end{align*}
where it is used that $1 - \sqrt{1 - \epsilon^2} = \epsilon^2/2 + O(\epsilon^4)$, as $\epsilon \rightarrow 0$, and where
\begin{align*}
    C_2 &= \begin{bmatrix}
    0 & 1 \\
    -1 & 0
    \end{bmatrix}.
\end{align*}
The action of $A_{\epsilon}$ is to rotate clockwise by $\arcsin(\epsilon)$ in the first two coordinates. The computation for $A_{\epsilon}$ shows that it can be written as a perturbation. Next let $U_1, \dots, U_k$ be $k$ independent random $N \times N$ orthogonal matrices drawn from the Haar measure on the orthogonal group, independent of all other sources of randomness. Define the vector
\begin{align*}
    \btheta_{j}^{\epsilon} := U_j A_{\epsilon} U_j^\top \btheta_j, \quad j \leq k,
\end{align*}
which can be thought of as a rotation of $\btheta_j$ in a random two-dimensional subspace. Let $\Theta_\epsilon = (\btheta_1^\epsilon, \btheta_2^\epsilon, \dots, \btheta_k^\epsilon)$ and set the exchangeable pair to \eqref{eq:projResult_proofmainResult_unconditionalJoint} to be $\inparen{ \Theta_\epsilon^\top \vec{x}^1, \dots, \Theta_\epsilon^\top \vec{x}^{2p}  }$.

\textbf{\underline{Step 2:}} (Verify condition 1 of Theorem \ref{thm:Meckes09_Thm4}).
Let $K_j \in \RR^{N \times 2}$ be the first two columns of $U_j$. The following computation is used many times in the sequel: for each $j$,
\begin{align}
    \btheta_j^{\epsilon} - \btheta_j &= U_j \inparen{ I_N + \inparen{\epsilon C_2 - \inparen{\frac{\epsilon^2}{2} + O(\epsilon^4) }I_2} \oplus 0_{N-2} } U_j^\top \btheta_j - \btheta_j \nonumber\\
    &= K_j \inparen{ \epsilon C_2  - \inparen{\frac{\epsilon^2}{2} + O(\epsilon^4) }I_2  } K_j^\top \btheta_j.
    \label{eq:theta_eps_minus_theta}
\end{align}
By Lemma \ref{lemma:haar_orthMatrix_entryMoments} it is straightforward to compute (see also Meckes thesis \cite{meckes2006PhD} proof of Theorem 3.1),
\begin{align*}
    \Eb K_j K_j^\top &= \frac{2}{N}I_N; \quad\quad \Eb K_j C_2 K_j^\top = 0_{N \times N}.
\end{align*}
We now compute the matrix $\Lambda$ in Theorem \ref{thm:Meckes09_Thm4} using \eqref{eq:theta_eps_minus_theta}:
\begin{align*}
    \frac{N}{\epsilon^2} \Eb \insquare{ \left.  \begin{bmatrix}
    \Theta_{\epsilon}^\top \vec{x}^1  \\
    \vdots\\
    \Theta_{\epsilon}^\top \vec{x}^{2p} 
    \end{bmatrix} - \begin{bmatrix}
    \Theta^\top \vec{x}^{1} \\
    \vdots\\
    \Theta^\top \vec{x}^{2p}
    \end{bmatrix} \right\rvert \begin{bmatrix}
    \Theta^\top \vec{x}^1 \\
    \vdots \\
    \Theta^\top \vec{x}^{2p}
    \end{bmatrix} } &= \Eb \insquare{ \left. \begin{bmatrix}
    \vrule\\
    \frac{N}{\epsilon} (0_{N \times N} \btheta_j)^\top \vec{x}^{\ell} - \frac{N}{2} \inparen{ \frac{2}{N} I_N \btheta_j}^\top \vec{x}^{\ell} \\
    \vrule
    \end{bmatrix} \; \right\rvert \begin{bmatrix}
    \Theta^\top \vec{x}^{1} \\
    \vdots \\
    \Theta^\top \vec{x}^{2p}
    \end{bmatrix} } + \begin{bmatrix}
    \vrule \\
    O(\epsilon) \\
    \vrule
    \end{bmatrix} \\
    &= - \begin{bmatrix}
    \Theta^\top \vec{x}^{1} \\
    \vdots\\
    \Theta^\top \vec{x}^{2p}
    \end{bmatrix} + \begin{bmatrix}
        \vrule \\
        O(\epsilon) \\
        \vrule
    \end{bmatrix},
\end{align*}
where the quantity $O(\epsilon)$ may depend on $N$, $\vec{x}^1, \dots, \vec{x}^{2p}$, $\Theta$. The above implies that the matrix $\Lambda$ in Theorem \ref{thm:Meckes09_Thm4} can be taken as $\frac{1}{N}I_{2kp}$. Moreover, the random vector $E$ in Theorem \ref{thm:Meckes09_Thm4} condition 1 is the zero vector. 
% Implicitly it is used that $\Eb \abs{W^T \theta} < \infty$ which can be shown by Cauchy-Schwarz inequality and the hypotheses on $X$.

\textbf{\underline{Step 3:}} (Verify condition 3 of Theorem \ref{thm:Meckes09_Thm4}). Condition 3 in Theorem \ref{thm:Meckes09_Thm4} is satisfied (as in \eqref{eq:meckes_cond3_sufficientCondToSatisfy})---the calculation in \eqref{eq:theta_eps_minus_theta} gives
\begin{align*}
    \Eb \insquare{ \left. \norm{ \begin{bmatrix}
    \Theta_{\epsilon}^\top \vec{x}^{1} \\
    \vdots\\
    \Theta_{\epsilon}^\top \vec{x}^{2p}
    \end{bmatrix} - \begin{bmatrix}
    \Theta^\top \vec{x}^{1} \\
    \vdots\\
    \Theta^\top \vec{x}^{2p} 
    \end{bmatrix} }^3 \right\rvert \begin{bmatrix}
    \Theta^\top \vec{x}^1 \\
    \vdots\\
    \Theta^\top \vec{x}^{2p}
    \end{bmatrix} }  = O(\epsilon^3).
\end{align*}

\textbf{\underline{Step 4:}} (Verify condition 2 of Theorem \ref{thm:Meckes09_Thm4}).
To compute the random matrix $F$ in Theorem \ref{thm:Meckes09_Thm4} condition 2, we first evaluate the following $2kp \times 2kp$ random matrix
\begin{align}
    \tilde{F} &:= \Eb \insquare{ \left.  \inparen{\begin{bmatrix}
    \Theta_{\epsilon}^\top \vec{x}^{1}  \\
    \vdots\\
    \Theta_{\epsilon}^\top \vec{x}^{2p} 
    \end{bmatrix} - \begin{bmatrix}
    \Theta^\top \vec{x}^{1} \\
    \vdots\\
    \Theta^\top \vec{x}^{2p}
    \end{bmatrix} } \inparen{\begin{bmatrix}
    \Theta_{\epsilon}^\top \vec{x}^{1}  \\
    \vdots\\
    \Theta_{\epsilon}^\top \vec{x}^{2p} 
    \end{bmatrix} - \begin{bmatrix}
    \Theta^\top \vec{x}^{1} \\
    \vdots\\
    \Theta^\top \vec{x}^{2p}
    \end{bmatrix} }^\top \right\rvert \begin{bmatrix}
    \Theta^\top \vec{x}^{1} \\
    \vdots\\
    \Theta^\top \vec{x}^{2p}
    \end{bmatrix} } = \begin{bmatrix}
    \tilde{M}_{11} & \dots & \tilde{M}_{1,2p} \\
    \vdots &  \ddots & \vdots \\
    \tilde{M}_{2p,1} & \dots & \tilde{M}_{2p, 2p}
    \end{bmatrix},
    % &= \begin{bmatrix}
    % \tilde{M}_{11} & \tilde{M}_{12} &\dots & \tilde{M}_{1,2p} \\
    % \tilde{M}_{21} & \tilde{M}_{22} & \dots & \tilde{M}_{2,2p} \\
    % \vdots & \vdots & \ddots & \vdots \\
    % \tilde{M}_{2p,1} & \tilde{M}_{2p, 2} & \dots & \tilde{M}_{2p, 2p}
    % \end{bmatrix} \\
    % &= \Eb \insquare{  \begin{bmatrix}
    % (\Theta_{\epsilon} - \Theta)^T W W^T(\Theta_{\epsilon} - \Theta) &  (\Theta_{\epsilon} - \Theta)^T W \tilde{W}^T(\Theta_{\epsilon} - \Theta) \\
    % (\Theta_{\epsilon} - \Theta)^T \tilde{W} W^T(\Theta_{\epsilon} - \Theta) & (\Theta_{\epsilon} - \Theta)^T \tilde{W} \tilde{W}^T(\Theta_{\epsilon} - \Theta)
    % \end{bmatrix}  \bigg\rvert \begin{bmatrix}
    % \Theta^T W \\
    % \Theta^T \tilde{W}
    % \end{bmatrix} }.
    \label{eq:randomMatrix_F_main}
\end{align}
where $\tilde{M}_{\ell,\ell'}$, $1 \leq \ell, \ell' \leq 2p$ are $k \times k$ blocks given by
\begin{align*}
    \tilde{M}_{\ell,\ell'} &:=  \Eb \insquare{ \left.  (\Theta_{\epsilon} - \Theta)^\top \vec{x}^{\ell} (\vec{x}^{\ell'})^\top(\Theta_{\epsilon} - \Theta)  \right\rvert \Theta^\top \vec{x}^\ell, \Theta^\top \vec{x}^{\ell'} }.
\end{align*}
It will be seen next that each of $\epsilon^{-2}\tilde{M}_{\ell, \ell'}$ can be expressed as the sum of diagonal random matrices and a perturbation of order $O(\epsilon) \boldsymbol{1}\boldsymbol{1}^\top$. First we make the following observations, which are used many times in the sequel: for $1 \leq j, j' \leq k$, using \eqref{eq:theta_eps_minus_theta} 
\begin{align}
    &(\btheta_j^{\epsilon} - \btheta_j)(\btheta_{j'}^{\epsilon} - \btheta_{j'})^\top \nonumber\\
    &\quad\quad = \inparen{ K_j \inparen{ \epsilon C_2  - \inparen{\frac{\epsilon^2}{2} + O(\epsilon^4) }I_2  } K_j^\top \btheta_j  } \inparen{K_{j'} \inparen{ \epsilon C_2  - \inparen{\frac{\epsilon^2}{2} + O(\epsilon^4) }I_2  } K_{j'}^\top \btheta_j}^\top \nonumber\\
    &\quad\quad = \epsilon^2 \inparen{ K_{j} C_2 K_j^\top \btheta_j }\inparen{ K_{j'} C_2 K_{j'}^\top \btheta_{j'}}^\top -2\inparen{ \frac{\epsilon^3}{2} + O(\epsilon^5)  } \inparen{ K_j C_2 K_{j'}^\top\btheta_j} \inparen{ K_{j'}C_2 K_{j'}^\top\btheta_{j'}}^\top \nonumber\\
    &\quad\quad\quad\quad + \inparen{\frac{\epsilon^2}{2} + O(\epsilon^4)}^2 \inparen{K_{j}K_{j}^\top \btheta_j} \inparen{K_{j'}K_{j'}^\top \btheta_{j'}}^\top.
    \label{eq:(theta_eps_minus_theta)(theta_eps_minus_theta)^T}
\end{align}
The last two terms are of higher order than $\epsilon^2$, and in what follows they can be absorbed into an $O(\epsilon)$ term. Furthermore, with $U = (u_{ij})$ and $K$ the first two columns of $U$, the $(i,j)$-th entry of $K C_2 K^\top$ is given by
\begin{align*}
    (K C_2 K^\top)_{ij} &= u_{i1}u_{j2} - u_{i2}u_{j1}.
\end{align*}
We return to the computation in \eqref{eq:randomMatrix_F_main}. The diagonal $k \times k$ blocks $\tilde{M}_{1,1},\dots \tilde{M}_{2p,2p}$ have the following diagonal entries. It suffices to compute the $(1,1)$-entry in $\tilde{M}_{11}$ and by symmetry the rest will follow. To ease notation we suppress the indices so that $\vec{x}^{1} \equiv \vec{x}$, $\btheta_1 \equiv \btheta$, and $\btheta_1^\epsilon \equiv \btheta^\epsilon = U A_\epsilon U^\top \btheta$. By \eqref{eq:(theta_eps_minus_theta)(theta_eps_minus_theta)^T},
\begin{align}
    &\inparen{\tilde{M}_{1,1}}_{1,1} = \frac{1}{\epsilon^2} \Eb \insquare{  \vec{x}^\top (\btheta^{\epsilon} - \btheta)(\btheta^{\epsilon} - \btheta)^\top \vec{x} \; | \; \Theta^\top \vec{x} } \nonumber\\
    &\quad\quad = \Eb \insquare{   \vec{x}^\top \inparen{ K C_2 K^\top \btheta }  \inparen{ K C_2 K^\top \btheta }^\top \vec{x}  \; | \; \btheta^\top \vec{x} } + O(\epsilon) \nonumber\\
    &\quad\quad = \Eb \insquare{ \sum_{i,j \leq N} x_i x_j \Eb\insquare{ (K C_2 K^\top \btheta)_i (K C_2 K^\top \btheta)_j   \; | \; \vec{x}, \btheta  } \; \bigg\rvert \; \btheta^\top \vec{x}  } + O(\epsilon) \nonumber\\
    &\quad\quad = \Eb \insquare{ \sum_{i,j \leq N} x_i x_j \Eb\insquare{ \inparen{\sum_{k \leq N} (u_{i1} u_{k2} - u_{i2} u_{k1}) \theta_{k} } \inparen{\sum_{\ell \leq N} (u_{j1}u_{\ell 2} - u_{j2}u_{\ell 1}) \theta_\ell }   \; \bigg\rvert \; \vec{x}, \btheta  } \; \bigg\rvert \; \btheta^\top \vec{x}  } + O(\epsilon) \nonumber\\
    &\quad\quad = \Eb \insquare{ \sum_{i,j \leq N} \sum_{\substack{k \neq i \\ \ell \neq j}} x_i x_j \theta_k \theta_\ell  \Eb \insquare{ (u_{i1}u_{k2} - u_{i2}u_{k1}) (u_{j1}u_{\ell 2} - u_{j2}u_{\ell 1})  }  \; \Bigg\rvert \; \btheta^\top \vec{x}  } + O(\epsilon) 
    \label{eq:randomMatrixF_common_expansion}
\end{align}
By Lemma \ref{lemma:haar_orthMatrix_entryMoments} point 8, we further have
\begin{align}
    &\inparen{\tilde{M}_{1,1}}_{1,1} = \frac{2}{N(N-1)} \Eb \insquare{  \sum_{i,j \leq N} \sum_{\substack{k \neq i \\ \ell \neq j}} x_i x_j \theta_k \theta_\ell \inparen{ \delta_{ij} \delta_{k\ell} - \delta_{i\ell}\delta_{jk} }  \; \Bigg\rvert \; \btheta^\top \vec{x}  } + O(\epsilon) \nonumber\\
    &\quad\quad = \frac{2}{N(N-1)} \Eb \insquare{  \sum_{i \neq k} x_{i}^{2} \theta_k^{2} - \sum_{i \neq k} x_i x_k \theta_i \theta_k  \; \Bigg\rvert \; \btheta^\top \vec{x}  } + O(\epsilon) \nonumber\\
    &\quad\quad = \frac{2}{N(N-1)} \Eb \insquare{  \sum_{i \leq N} x_{i}^{2}\inparen{\sum_{k \leq N} \theta_k^{2} - \theta^{2}_{i}} - \sum_{i \neq k} x_i x_k \theta_i \theta_k  \; \Bigg\rvert \; \btheta^\top \vec{x}  } + O(\epsilon) \nonumber\\
    &\quad\quad = \frac{2}{N(N-1)} \Eb \insquare{ \norm{\vec{x}}^2\norm{\btheta}^2 - (\btheta^\top \vec{x})^2   \; \bigg\rvert \; \btheta^\top X  } + O(\epsilon).
    \label{eq:randomMatrixF_tildeM_1,1_1,1_finalExpression}
\end{align}
On the other hand, the diagonal $k \times k$ blocks in \eqref{eq:randomMatrix_F_main} have the following off-diagonal entries (again it suffices to compute the $(1,2)$-entry of $\tilde{M}_{11}$ by symmetry). Similar to \eqref{eq:randomMatrixF_common_expansion}, we have (again with $\vec{x}^1 \equiv \vec{x}$)
\begin{align*}
    &\frac{1}{\epsilon^2} \Eb \insquare{  \vec{x}^\top (\btheta_1^{\epsilon} - \btheta_1)(\btheta_2^{\epsilon} - \btheta_2)^\top \vec{x} \; | \; \Theta^\top \vec{x} } \\
    &\quad = \Eb \insquare{ \sum_{i,j \leq N} \sum_{\substack{k \neq i \\ \ell \neq j}} x_i x_j \theta_{1,k} \theta_{2,\ell}  \Eb \insquare{ (u_{1,i1}u_{1,k2} - u_{1,i2}u_{1,k1}) (u_{2,j1}u_{2,\ell 2} - u_{2,j2}u_{2,\ell 1})  }  \; \Bigg\rvert \; \btheta_1^\top \vec{x}, \btheta_2^\top \vec{x}  } + O(\epsilon) \\
    &\quad = O(\epsilon),
\end{align*}
which follows from Lemma \ref{lemma:haar_orthMatrix_entryMoments} point 9. This completes the computation of entries in the diagonal $k\times k$ blocks $\tilde{M}_{11}, \dots, \tilde{M}_{2p,2p}$ in \eqref{eq:randomMatrix_F_main}.

The off-diagonal $k \times k$ blocks $\tilde{M}_{\ell, \ell'}$, $\ell \neq \ell'$ in \eqref{eq:randomMatrix_F_main} are computed similarly. The $(1,1)$-entry in $\tilde{M}_{1,2}$ is given as follows. By similar steps that lead to \eqref{eq:randomMatrixF_tildeM_1,1_1,1_finalExpression},
\begin{align*}
    &\frac{1}{\epsilon^2} \Eb \insquare{  (\vec{x}^{1})^\top (\btheta_1^{\epsilon} - \btheta_1)(\btheta_1^{\epsilon} - \btheta_1)^\top \vec{x}^{2} \; | \; \Theta^\top \vec{x}^1, \Theta^\top \vec{x}^{2} } \\
    &\quad\quad = \frac{2}{N(N-1)} \Eb \insquare{ (\vec{x}^1)^\top \vec{x}^2 \norm{\btheta_1}^2 - (\btheta_1^\top \vec{x}^1)(\btheta_1^\top \vec{x}^2)   \; \bigg\rvert \; \Theta^\top \vec{x}^1, \Theta^\top \vec{x}^{2}  } + O(\epsilon).
\end{align*}
Furthermore, the off-diagonal entries in $\tilde{M}_{1,2}$ are of order $O(\epsilon)$ by similar reasoning. Altogether, since $F = \lim_{\epsilon \rightarrow 0} \epsilon^{-2}\tilde{F} - 2 \Lambda \Sigma$, where $\Sigma = R^{2p}_{\rho, q} \otimes I_k$, we obtain that $F$ in Theorem \ref{thm:Meckes09_Thm4} condition 2 can be set as
\begin{align*}
    F &:= \begin{bmatrix}
    M_{11} & \dots & M_{1,2p} \\
    \vdots &  \ddots & \vdots \\
    M_{2p,1} & \dots & M_{2p, 2p}
    \end{bmatrix},
\end{align*}
where $M_{\ell, \ell'}$, $1 \leq \ell, \ell' \leq 2p$, are $k \times k$ random diagonal matrices given by
\begin{align*}
    M_{\ell, \ell'} &= \Eb \insquare{ \left. \begin{bmatrix}
    \ddots & & \\
    & \frac{2}{N(N-1)} \inparen{ (\vec{x}^{\ell})^\top \vec{x}^{\ell'} \norm{\btheta_j}^2 - (\btheta_j^\top \vec{x}^{\ell})(\btheta_j^\top \vec{x}^{\ell'})  } - \frac{2}{N} \inparen{ \rho \boldsymbol{1}_{\ell = \ell'} + q \boldsymbol{1}_{\ell \neq \ell'}  } & \\
    & & \ddots
    \end{bmatrix} \; \right\rvert \begin{bmatrix} \btheta^\top \vec{x}^{\ell} \\ \btheta^\top \vec{x}^{\ell'} \end{bmatrix} }
\end{align*}
where the $(j,j)$-th elements are indicated, for $1 \leq j \leq k$.

Since $\Eb \norm{F}_{\textnormal{HS}} \leq \Eb \norm{F}_{L_1}$, it remains to bound the expected magnitude of the entries of $F$.  Denoting $\btheta \equiv \btheta_1$, and $\vec{x} \equiv \vec{x}^{1}$, define the quantities
%To lighten notation, denote $\theta := \Theta^1$, $X := X^{1}$, $\tilde{X} := X^2$. 
%Define the quantities
\begin{align*}
    \textnormal{I} &:=  \sqrt{  \Eb \insquare{  \inparen{  \norm{\vec{x}}^2\norm{\btheta}^2 - N\rho  } ^{2} } } \\
    \textnormal{II} &:= \sqrt{  \Eb \insquare{  \inparen{  (\vec{x}^{1})^\top \vec{x}^2 \norm{\btheta}^2 - Nq  } ^{2} } } \\
    \textnormal{III} &:= \Eb \insquare{(\btheta^\top \vec{x})^2}.
\end{align*}
For the diagonal terms in $F$, it again suffices by symmetry to consider the (1,1)-entry of $M_{11}$: $F_{11}$. We have
\begin{align*}
    \Eb \abs{F_{11}} &= \frac{2}{N(N-1)} \Eb \abs{ \Eb \insquare{  \norm{\vec{x}^1}^2\norm{\btheta}^2 - (\btheta^\top \vec{x})^2 - (N-1)\rho  \big\rvert \btheta^\top \vec{x}^1 } }\\
    &\leq \frac{2}{N(N-1)} \inparen{ \textnormal{I}  + \textnormal{III} + \rho  },
\end{align*}
which follows from Jensen's inequality and tower property of expectations.

On the other hand, to deduce a bound for all off-diagonal entries of $F$, it suffices by symmetry to compute the $(1,1)$-entry of $M_{1,2}$: $F_{1,k+1}$. Similarly, we have
\begin{align*}
    \Eb \abs{F_{1,k+1}} &= \frac{2}{N(N-1)} \Eb \abs{ \Eb \insquare{  (\vec{x}^1)^\top\vec{x}^2 \norm{\btheta}^2 - (\btheta^\top \vec{x}^1)(\btheta^\top \vec{x}^2) \; \big\rvert \; (\btheta^\top \vec{x}^1, \btheta^\top \vec{x}^2) } } \\
    &\leq \frac{2}{N(N-1)} \inparen{ \textnormal{II} + \textnormal{III} + q },
\end{align*}
where we have also used Cauchy-Schwarz and symmetry among replicas to produce term $\textnormal{III}$.

There are $2kp$ diagonal entries in $F$, and $2k\cdot\binom{2p}{2}$ non-zero off-diagonal terms in $F$. Furthermore, from Lemma \ref{lemma:||Sigma^-1/2||_op_Computation} $\norm{\Sigma^{-1/2}}_{\textnormal{op}} = \frac{1}{\sqrt{\rho - q}}$, and $\norm{\Lambda^{-1}}_{\textnormal{op}} = N$, so that Theorem \ref{thm:Meckes09_Thm4} yields
\begin{align}
    \sup_{\norm{g}_{\textnormal{Lip}} \leq 1} \abs{ \PP_{N}\insquare{g} - \QQ\insquare{g} } &\leq \frac{8p^2 k }{\sqrt{\rho - q} (N-1)} \inparen{ \textnormal{I}  + \textnormal{II} + \textnormal{III} + \rho + q }
    \label{eq:Xdist_Proj2p_inTermsOf_I_II_III}
\end{align}
It remains to upper bound $\textnormal{I}$, $\textnormal{II}$, $\textnormal{III}$. A straightforward computation, using that $\Eb \norm{\btheta}^2 = 1$, gives the identity
\begin{align*}
    \textnormal{I}^2 &= \Eb \insquare{ \norm{\btheta}^4 } \Eb \insquare{  \inparen{ \norm{\vec{x}}^2 - N\rho }^2   } + N \rho \Var\insquare{\norm{\btheta}^2}  \inparen{ 2\Eb \insquare{\norm{\vec{x}}^2}  - N \rho  }.
\end{align*}
Writing $2\Eb \insquare{\norm{\vec{x}}^2}  - N \rho \leq 2 \sqrt{\Eb \insquare{\inparen{ \norm{\vec{x}}^2 - N\rho }^2}} + N\rho$, and using that $\Var \norm{\btheta}^2 = 2/N$ so that $\Eb \norm{\btheta}^4 \leq 3$, we obtain from the given hypotheses that
\begin{align*}
    \textnormal{I} &\leq \sqrt{3N^2 c_1 + 4N\rho \sqrt{c_1} + 2N\rho^2 } = d_1(c_1).
\end{align*}
An analogous computation yields that
\begin{align*}
    \textnormal{II} &\leq \sqrt{3N^2 c_2 + 4Nq\sqrt{c_2} + 2Nq^2 } = d_2(c_2).
\end{align*}
On the other hand, since $\Eb \insquare{  \btheta \btheta^T } = N^{-1} I_{N}$, we have
\begin{align*}
    \textnormal{III} &= \frac{1}{N} \Eb \norm{\vec{x}}^2 = \Eb \insquare{ \frac{\norm{\vec{x}}^2}{N} - \rho }  + \rho  \leq \sqrt{c_1} + \rho.
\end{align*}
Substituting these bounds into \eqref{eq:Xdist_Proj2p_inTermsOf_I_II_III} finishes the proof.
\end{proofof}

\begin{proofof}{Lemma \ref{lemma:||F_r||_Lip}}
The proof is by induction on $r$. The case $r = 1$ is immediate. For $r \geq 2$, let $x_1, \dots, x_r, y_1, \dots, y_r \in \RR^{k}$. We have
\begin{align*}
    &\abs{ F_r(x_1,\dots,x_r) - F_r(y_1,\dots,y_r)  } \\
    &\quad\quad \leq \abs{ g(x_r) \inparen{ F_{r-1}(x_1,\dots,x_{r-1}) - F_{r-1}(y_1,\dots, y_{r-1})  } } + \abs{ \inparen{g(x_r) - g(y_{r}) } F_{r-1}(y_1, \dots, y_{r-1})  } \\
    &\quad\quad \leq M \norm{F_{r-1}}_{\textnormal{Lip}} + M^{r-1} \norm{g}_{\textnormal{Lip}},
\end{align*}
and the result follows from the induction hypothesis.
\end{proofof}

\begin{lemma} Let $\PP$ and $\QQ$ be probability measures on $\RR^M$. For any subset $I \subseteq \inbraces{1,\dots,M}$, let $\PP_I$ and $\QQ_I$ be the marginals of $\PP$ and $\QQ$ on those coordinates indexed by $I$. Then
\begin{align*}
    \textnormal{W}(\PP_I, \, \QQ_I) &\leq \textnormal{W}\!\inparen{ \PP, \, \QQ }.
\end{align*}
\label{lemma:jointConverge_implies_marginalConverge(Wasserstein)}
\end{lemma}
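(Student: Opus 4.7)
The plan is to prove this via the coupling definition of Wasserstein distance, since the statement is essentially that the marginal/projection map is a non-expansive operation. I would choose the coupling route over the Kantorovich--Rubinstein dual route only because it is slightly more self-contained; the dual route (extending any 1-Lipschitz test function $g_I$ on $\RR^{|I|}$ to $g(x):=g_I(x_I)$ on $\RR^M$, which is automatically 1-Lipschitz since coordinate projection is a Euclidean contraction) is equally valid and no harder.

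First I would fix an arbitrary coupling $\pi \in \Pi(\PP, \QQ)$ on $\RR^M \times \RR^M$ with finite first moment, and consider the pushforward $\pi_I := (\mathrm{proj}_I \times \mathrm{proj}_I)_\# \pi$, where $\mathrm{proj}_I : \RR^M \to \RR^{|I|}$ keeps the coordinates in $I$. The marginals of $\pi_I$ are exactly $\PP_I$ and $\QQ_I$, so $\pi_I \in \Pi(\PP_I, \QQ_I)$.

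Next I would use the pointwise inequality $\|x_I - y_I\| \leq \|x - y\|$ (projection onto a coordinate subspace is 1-Lipschitz in Euclidean norm) together with the change-of-variables formula for pushforwards to get
\begin{align*}
    \int \|u - v\| \, \pi_I(\ud u, \ud v) = \int \|x_I - y_I\| \, \pi(\ud x, \ud y) \leq \int \|x - y\| \, \pi(\ud x, \ud y).
\end{align*}
Taking the infimum of the right-hand side over all $\pi \in \Pi(\PP, \QQ)$ gives $\textnormal{W}(\PP_I, \QQ_I) \leq \textnormal{W}(\PP, \QQ)$, since the left-hand side is an upper bound on $\textnormal{W}(\PP_I, \QQ_I)$ for every admissible $\pi_I$.

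There is no real obstacle here; the only thing to be careful about is that the metric on $\RR^{|I|}$ used to define $\textnormal{W}(\PP_I, \QQ_I)$ is the restriction of the Euclidean metric on $\RR^M$, which is what makes the projection contractive. If a different norm (e.g.~$\ell^1$ vs.~$\ell^2$) were used on the two spaces, one would need to track constants; but in the paper's conventions this is automatic.
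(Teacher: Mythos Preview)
Your proof is correct and follows essentially the same coupling-based approach as the paper: push a coupling of $(\PP,\QQ)$ forward by the coordinate projection to obtain a coupling of $(\PP_I,\QQ_I)$, then use that $\|x_I-y_I\|\le\|x-y\|$. The only cosmetic difference is that the paper fixes a single $\epsilon$-near-optimal coupling $\mu$ and lets $\epsilon\to 0$, whereas you take the infimum over all couplings directly; both are fine.
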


\begin{proofof}{Lemma \ref{lemma:jointConverge_implies_marginalConverge(Wasserstein)}}
Let $\mu \equiv \cL_{\mu}\inparen{ \inparen{X_i}_{i \leq M}, \, \inparen{Y_i}_{i \leq M}  }$ be an almost optimal coupling of $\PP$ and $\QQ$ for the Wasserstein infimum, i.e., for $\epsilon > 0$,
\begin{align}
    \int  \norm{ \inparen{ (X_i)_{i \leq M} -  (Y_i)_{i \leq M}  } }  \, \ud \mu &\leq \textnormal{W}\!\inparen{ \PP, \, \QQ  } + \epsilon,
    \label{eq:jointConverge_implies_marginalConverge(Wasserstein)_OPTCOUPLING}
\end{align}
where the distance on the LHS corresponds to Euclidean distance on $\RR^{M}$. Let
\begin{align*}
    \mu_I := \cL_{\mu}\inparen{(X_i)_{i \in I}, \, (Y_i)_{i \in I}},
\end{align*}
be the corresponding marginal of $\mu$ on coordinates in $I$. 
% Since $m(\Theta, W)$, $m(\Theta, \tilde{W})$ are conditionally independent given $\Theta$, we may write $\mu = \cL(\Theta)\cL(m(\Theta, W) \, | \, \Theta )\cL(m(\Theta, \tilde{W}) \, | \, \Theta )\cL(Z_1) \cL(Z_2)$. Define 
% \begin{align*}
%     \nu_{\Theta} := \cL(m(\Theta, W) \, | \, \Theta )\cL(Z_1).
% \end{align*}
Therefore
\begin{align*}
    \textnormal{W}\!\inparen{  \PP_I, \, \QQ_I }  
    &\leq \int \norm{ (X_i)_{i \in I} - (Y_i)_{i \in I} } \; \ud \mu_I  = \int \norm{ (X_i)_{i \in I} - (Y_i)_{i \in I} } \; \ud \mu \leq \int \norm{ (X_i)_{i \leq M} - (Y_i)_{i \leq M} } \; \ud \mu,
\end{align*}
and the result follows from \eqref{eq:jointConverge_implies_marginalConverge(Wasserstein)_OPTCOUPLING}.
\end{proofof}

\begin{proofof}{Lemma \ref{lemma:jointConverge_implies_twistedMarginalConverge(Wasserstein)}}
Let $\mu$ be a near optimal coupling of $\PP$ and $\QQ$, i.e.~$\mu$ is the probability measure on $\RR^{2dD}$ with $\textnormal{W}(\PP, \QQ) \leq \int \norm{ \inparen{ y_{i} }_{i \leq dD} - \inparen{z_i}_{i \leq dD} }  \mu\inparen{\ud y_1, \dots, \ud y_{dD}, \ud z_1, \dots, \ud z_{dD}   } + \epsilon$, for some $\epsilon > 0$. Let $\mu_r$ be the marginal on the coordinates $f(Y^1, U), \dots, f(Y^r, U)$ and $f(Z^1, U), \dots, f(Z^r, U)$, that is, if $\pi : \RR^{2dD} \rightarrow \RR^{2dr}$ is the map $\pi\inparen{ \omega_1,\dots,\omega_{2dD}   } = \inparen{ \omega_{1},\dots, \omega_{dr}, \omega_{dD + 1}, \dots, \omega_{dD + dr} }$, then 
\begin{align*}
    \mu_r := \mu \circ  \pi^{-1}.
\end{align*}
Let $\nu = \cL\inparen{  f(Y^1, U),\dots, f(Y^r, U), f(Z^{r+1}, U), \dots, f(Z^D, U), \inparen{  f(Z^\ell, U)  }_{\ell \leq D}  }$ be the coupling between $\TT_r$ and $\QQ$ satisfying
\begin{align*}
    \nu \circ \pi^{-1} &= \mu_r,
\end{align*}
that is, the marginal on the coordinates $f(Y^1, U), \dots, f(Y^r, U)$ and $f(Z^1, U), \dots, f(Z^r, U)$ coincides with $\mu_r$. Then
\begin{align*}
    \textnormal{W}(\TT_r, \QQ) &\leq \int \norm{ \inparen{ y_{i} }_{i \leq dD} - \inparen{z_i}_{i \leq dD} }_{\RR^{dD}}  \nu\inparen{\ud y_1, \dots, \ud y_{dD}, \ud z_1, \dots, \ud z_{dD}   } \\
    &= \int \norm{ \inparen{ y_{i} }_{i \leq dr} - \inparen{z_i}_{i \leq dr} }_{\RR^{dr}}  \mu_r \inparen{\ud y_1, \dots, \ud y_{dr}, \ud z_1, \dots, \ud z_{dr}   } \\
    &\leq \textnormal{W}(\PP, \QQ),
\end{align*}
where the last inequality follows from Lemma \ref{lemma:jointConverge_implies_marginalConverge(Wasserstein)}.   
\end{proofof}

\addcontentsline{toc}{section}{References}
%\nocite{*}
\bibliographystyle{alpha}
\bibliography{mybib.bib}

\end{document}